\documentclass[12pt]{amsart}
\usepackage{pifont}
\usepackage{txfonts}
\usepackage{}
\usepackage{amsfonts}
\usepackage{graphicx}
\usepackage{epstopdf}
\usepackage{amsmath}
\usepackage{amsthm}
\usepackage{latexsym,bm}
\usepackage{amssymb}
\usepackage{esint}
\usepackage{enumitem}
\usepackage{indentfirst}
\usepackage{amscd}
\newtheorem{thm}{Theorem}[section]
\newtheorem{cor}[thm]{Corollary}
\newtheorem{lem}[thm]{Lemma}

\newtheorem{question}[thm]{Question}

\newtheorem{Remark}{Remark}
\numberwithin{equation}{section}
\numberwithin{Remark}{section}

\begin{document}

\title{A Loewner-Nirenberg phenomena for Ricci flow on compact manifolds with boundary}

\author{Gang Li$^\dag$}

\begin{abstract}
In this paper, we show that starting from a geodesic ball $\overline{B_{r_0}}(0)$ in $\mathbb{H}^n$, for $n\geq3$, with prescribed non-decreasing rotationally symmetric mean curvature and the fixed conformal class $[g_{\mathbb{S}^{n-1}}]$ on the boundary, the solution $g(t)$ to the normalized Ricci flow $(\ref{equn_NRicciflow})$ which is continuous up to the boundary, exists for all $t>0$ and converges locally uniformly in $B_{r_0}(0)$ to a complete hyperbolic metric as $t\to\infty$(see Theorem \ref{thm_convern345} for details). Moreover, the sectional curvature of $g(t)$ maintains less than $-1$ for $t>0$. For dimension $2$, to achieve such a convergence result, we need the additional assumption that the mean curvature on the boundary increases in a certain speed to infinity as $t\to\infty$.
\end{abstract}

\renewcommand{\subjclassname}{\textup{2000} Mathematics Subject Classification}
 \subjclass[2010]{Primary 53C44; Secondary 35K55, 35R01, 53C21}


\thanks{$^\dag$ Research supported by the National Natural Science Foundation of China No. 11701326.}

\address{Gang Li, School of Mathematics, Shandong University, Jinan, Shandong Province, China}
\email{runxing3@gmail.com}

\maketitle


\section{Introduction}


The Ricci flow
\begin{align}\label{equn_Ricciflow}
\frac{\partial}{\partial t}g=-2\text{Ric}_{g}
\end{align}
 introduced by Hamilton in \cite{Ha} evolves the metric in the direction of an Einstein metric under certain renormalization, and has played an important role in the study of the topology of low-dimensional closed manifolds.

In [LY], the following normalized Ricci flow was used to study the stability of the hyperbolic space $(\mathbb{H}^n,g_{-1})$,
\begin{align}\label{equn_NRicciflow}
\frac{\partial}{\partial t}g=-2(\text{Ric}_g+(n-1)g),
\end{align}
starting from a complete metric $g_0$ near $g_{-1}$. This stability result was later improved and extended in \cite{SSS,Ba1,Ba2,Bah,QSW}, etc. In particular, the stability of conformally compact Einstein (asymptotically hyperbolic Einstein, short for AHE) metrics was studied in \cite{Bah,QSW} using the normalized Ricci flow, under some weaker curvature conditions on the initial metric. For the recent development in this direction, one is referred to \cite{BGIM} and the references therein. Recall that a family of metrics $g(t)$ satisfies $(\ref{equn_Ricciflow})$ if and only if the family of metrics $h(t)$ defined by
\begin{align}\label{equn_vartransf}
h(t)=e^{-2(n-1)t}g(\frac{1}{2(n-1)}(e^{2(n-1)t}-1))
\end{align}
satisfies $(\ref{equn_NRicciflow})$.

On a compact manifold $\overline{M^n}$ with boundary $\partial M$, the initial-boundary value problems for Ricci flow have also been studied, with the boundary keeping umbilic along the flow, aiming to the convergence of the solution to a constant curvature metric continuous up to the boundary with a totally geodesic boundary, see \cite{Sh,Co,Co1}. The short-time well-posedness of more general initial-boundary problems has been studied in \cite{Pu,Gi,Ch}. In \cite{Gi}, Gianniotis proved that there exists a unique solution in a short time to the Ricci flow starting from a smooth metric $g_0$, with the following boundary conditions:
\begin{align}
&\label{equn_bdc1}H(g)=\eta,\\
&\label{equn_bdc2}[g^T]=[\gamma],
\end{align}
where $H(g)$ is the mean curvature of $\partial M$ and $[g^T]$ is the conformal class of the part of $g$ tangential to the boundary, and the initial and boundary data $g_0$, $\eta$ and $\gamma$ satisfy natural compatibility conditions at $t=0$.

On the other hand, Loewner and Nirenberg showed that on a bounded domain $\Omega$ of the Euclidean space $\mathbb{R}^n$, there exists a unique complete conformal metric $g$ with scalar curvature $R_g=-n(n-1)$, and it turns out to be asymptotically hyperbolic. Equivalently, there exists a unique solution to the following blow-up boundary value problem (the Loewner-Nirenberg problem, short for L-N problem) of the Yamabe equation:
\begin{align}
&\Delta u =\frac{n(n-2)}{4}u^{\frac{n+2}{n-2}},\,\,\,\text{in}\,\,\Omega,\\
&u(p)\to\infty,\,\,\,\text{as}\,\,p\to\partial \Omega.
\end{align}
Later, Aviles and McOwen \cite{AM,AM1} proved the existence of the solution to the L-N problem on a general compact Riemannian manifold $\overline{M}$ with boundary, and the uniqueness of the solution was proved in \cite{ACF}. In \cite{Li1,Li2}, the author introduced the Cauchy-Dirichlet problem of the Yamabe flow, starting from a metric continuous up to the boundary, and showed that when the boundary data goes to infinity not too slowly when the time $t\to\infty$, then the solution converges locally uniformly to the solution of the L-N problem in the interior of $\overline{M}$.

In the study of the AHE manifolds, the analysis on certain conformal compactification of the AHE metric usually provides global information and is more convenient. Therefore, it is reasonable to consider the problem of choosing suitable initial-boundary data $g_0,\,\eta$ and $\gamma$ in $(\ref{equn_bdc1})$ and $(\ref{equn_bdc2})$, so that the solution to the normalized Ricci flow $(\ref{equn_NRicciflow})$ starting from a metric $g_0$ continuous up to $\partial M$ converges locally uniformly to an AHE metric in the interior of $M$ as $t\to\infty$, under the boundary condition $(\ref{equn_bdc1})-(\ref{equn_bdc2})$. This is definitely a hard problem. As a first try, we may ask:
\begin{question}
Let $\overline{M}=\overline{B_{r_0}}(0)$ be a closed geodesic ball of radius $r_0>0$ on the hyperbolic space $(\mathbb{H}^n,g_{-1})$, where
\begin{align}
g_{-1}=dr^2+\sinh^2(r)g_{\mathbb{S}^{n-1}},
\end{align}
with $g_{\mathbb{S}^{n-1}}$ the round metric on $\mathbb{S}^{n-1}$, for $0\leq r\leq r_0$. Consider the initial-boundary problem
\begin{align}
&\label{equn_NRF1}\frac{\partial}{\partial t}g=-2(\text{Ric}_g+(n-1)g),\,\,\text{in}\,\,\overline{M}\times[0,\infty),\\
&\label{equn_Nbdc1}H(g)=\eta,\,\,\,\,\,\,\,\,\text{on}\,\,\partial M\times[0,\infty),\\
&\label{equn_Nbdc2}[g^T]=[g_{\mathbb{S}^{n-1}}],\,\,\text{on}\,\,\partial M\times[0,\infty),\\
&\label{equn_Ninc2}g\big|_{t=0}=g_{-1},\,\,\,\,\text{on}\,\,\overline{M},
\end{align}
where $\eta=\eta(t)$ depends only on time $t$. How to choose the boundary data $\eta(t)$ so that the solution to $(\ref{equn_NRF1})-(\ref{equn_Ninc2})$ exists for all time and converges locally uniformly to the complete hyperbolic metric in the interior of $\overline{M}$ as $t\to\infty$?
\end{question}
We now impose the compatibility condition for $(\ref{equn_NRF1})-(\ref{equn_Ninc2})$ with $k\geq2$:
\begin{align}\label{equn_comptk}
\eta(0)=H_{g_{-1}}\big|_{\partial M}=(n-1)\,\frac{\cosh(r_0)}{\sinh(r_0)},\,\,\,\,\,\eta^{(j)}(0)=0,
\end{align}
for $1\leq j\leq k$.

Here is the main theorem of the paper.
\begin{thm}\label{thm_convern345}
Let $n\geq3$ and $k\geq2$. Assume that $\eta=\eta(t)\in C^k([0,\infty))$ is non-decreasing for $t\geq0$ and $\eta'(t)>0$ on $(0,t_0)$ for some $t_0>0$. Assume the compatibility conditions $(\ref{equn_comptk})$ holds for $k$. Then the solution $g(t)$ to $(\ref{equn_NRF1})-(\ref{equn_Ninc2})$ exists for all $t>0$, and  converges locally uniformly in the interior of $\overline{M}$ to the complete hyperbolic metric $g_{\infty}$ on $B_{r_0}(0)$, as $t\to\infty$.
\end{thm}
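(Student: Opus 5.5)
Since the data are rotationally symmetric, by the uniqueness of the Gianniotis solution the flow stays rotationally symmetric. So I will work with a warped-product ansatz on $\overline{B_{r_0}}(0)$. Using the distance $s$ from the center and time $t$, write $g(t)=ds^2+\varphi(s,t)^2 g_{\mathbb{S}^{n-1}}$, or alternatively, in a fixed radial coordinate, $g=a(r,t)^2dr^2+b(r,t)^2g_{\mathbb{S}^{n-1}}$ with a gauge fixing $r\in[0,r_0]$. Boundary condition $(\ref{equn_Nbdc2})$ is then automatic. Boundary condition $(\ref{equn_Nbdc1})$ becomes $(n-1)\varphi_s/\varphi=\eta(t)$ at the boundary point $s=\ell(t)$, where $\ell(t)$ is the current radius. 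The key geometric quantities are the radial sectional curvature $K_1=-\varphi_{ss}/\varphi$ and the tangential one $K_2=(1-\varphi_s^2)/\varphi^2$.

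\textbf{Step 1: evolution equations and curvature sign.} Under $(\ref{equn_NRF1})$ I would derive the reaction–diffusion system satisfied by $K_1+1$ and $K_2+1$. I would then prove that $K_i\le -1$ is preserved and that $K_i<-1$ for $t>0$, by the maximum principle together with a boundary analysis. The boundary analysis is where monotonicity enters. Differentiating $H=\eta$ in time relates $\eta'(t)\ge0$ to the normal derivative of the curvature quantities at $\partial M$, which rules out a boundary maximum (Hopf lemma). The assumption $\eta'>0$ on $(0,t_0)$ then gives strictness through the strong maximum principle. The compatibility conditions $(\ref{equn_comptk})$ give enough regularity at $t=0$ to start this argument.

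\textbf{Step 2: long-time existence.} Curvature pinched above by $-1$ has to be complemented by a lower bound. In the warped setting I would bound $|K_i|$ through a maximum-principle argument on $\varphi_s^2-1$ and on $\varphi_{ss}/\varphi$, using $n\ge3$ in the reaction terms; this is where dimension matters. Together with control of $\ell(t)$, this gives curvature bounds on any finite interval. Shi-type boundary estimates from \cite{Gi} then yield higher regularity, so the flow continues for all $t$.

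\textbf{Step 3: convergence.} By Step 1, $\partial_t g=-2(\mathrm{Ric}+(n-1)g)\ge0$, so $g(t)$ is monotone non-decreasing in $t$. I would obtain a uniform upper barrier in any compact subset of the interior by comparing with the complete hyperbolic metric $g_\infty$ on $B_{r_0}(0)$, i.e. the Loewner–Nirenberg-type maximal metric. The aim is a comparison principle showing $g(t)\le g_\infty$, using that metrics with $K\le -1$ are dominated by the complete hyperbolic one; this is an Ahlfors–Schwarz type argument for the warped function $\varphi$ as a function of the fixed coordinate. Monotone convergence plus local derivative estimates then give smooth local convergence to a limit $\tilde g$ with $\mathrm{Ric}_{\tilde g}=-(n-1)\tilde g$, which is rotationally symmetric and hence hyperbolic. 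It remains to show that $\tilde g$ is complete, i.e. equals $g_\infty$, which means showing that $\eta(t)\to\infty$ is forced. If the limit were incomplete, the boundary would stay at finite distance with bounded geometry, and the stationary limit would contradict the strict growth $\partial_t g>0$ propagated from $(0,t_0)$.

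\textbf{Main obstacle.} I expect this last step, completeness, and the lower curvature bound in Step 2 to be the hardest parts. For completeness I would first try to show that $\ell(t)\to\infty$, by integrating $\partial_t\ell\ge c>0$ coming from the strict negativity $K_1<-1$ near the boundary.
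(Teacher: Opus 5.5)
Your Steps 1 and 2 point in the same direction as the paper: a maximum principle with boundary relations obtained by differentiating $H=\eta$, then finite-time curvature bounds and Gianniotis' boundary estimates. Step 3, however, has a real gap at the interior barrier. For $n\ge3$ the flow is not conformal, so there is no Ahlfors--Schwarz or Loewner--Nirenberg comparison to invoke. A static condition $K_1,K_2\le-1$ gives no control of $a(r,t)$ in the fixed coordinate $r$: a radially reparametrized hyperbolic ball of huge radius has $K_1=K_2=-1$ and $s(r_1)$ as large as you like. The limit is also not the Loewner--Nirenberg metric, since $\partial_t\ln(a/b)=Q-P>0$ for $r\in(0,r_0]$, so $g_\infty$ is not conformal to $g(0)$.

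What works is a flow-specific link between $a$ and $b$. It requires the stronger pinching $K_1\le K_2\le-1$, i.e.\ $(n-1)Q\le P\le Q\le0$ in the paper's notation. You never state $K_1\le K_2$. Yet this inequality, together with $Q\le0$, $P\ge(n-1)Q$ and the identity $\partial_s(P-(n-1)Q)=-2(P-Q)H_g$, is what makes the boundary maximum-principle argument of your Step 1 close. Near $t=0$, where $P\equiv Q\equiv0$ and $\eta'(0)=0$, a bare Hopf argument does not even get started, and the paper needs a separate asymptotic boundary analysis there.

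Integrating $a_t=-Pa$, $b_t=-Qb$ under this pinching gives $b/b(r,0)\le a/a(r,0)\le(b/b(r,0))^{n-1}$. Combined with $b\ge\sinh s$ (from $b_{ss}\ge b$), this yields $ds/\sinh s\ge dr/\sinh r$, hence $\tanh(s(r,t)/2)<\tanh(r/2)/\tanh(r_0/2)$, and then uniform bounds on $a,b$ over $[0,r_1]$. Even so, bounded monotone metrics do not give the uniform-in-time interior curvature bounds that your ``local derivative estimates'' presuppose. The paper proves a lower bound for $P,Q$ on $[0,r_1]\times[0,\infty)$ separately, by a Harnack argument plus a bound on $\partial_ra$ obtained by integrating $\partial_t(a^{-1}a_r-(n-1)b^{-1}b_r)=2(n-1)ab^{-1}b_s(P-Q)$.

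Your completeness step is also off. $\eta$ is prescribed data, and the theorem covers bounded $\eta$ (e.g.\ increasing on $(0,t_0)$ and constant afterwards), so completeness cannot mean that ``$\eta\to\infty$ is forced''. Strict growth $\partial_tg>0$ is compatible with convergence, and pointwise strictness $K_1<-1$ does not give $\partial_t\ell\ge c>0$; you need a growth rate that does not decay.

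The missing quantitative input is the boundary bound $Q(r_0,t)\le-\frac{n-2}{(n-1)^2}(\eta(t)^2-\eta(0)^2)$. It follows from $P-(n-1)Q=-(n-1)(n-2)[b^{-2}+1-\eta^2/(n-1)^2]$ on $\partial M$, $P<0$ and $b(r_0,t)\ge\sinh r_0$. It is uniformly negative for $t\ge t_0/2$ precisely because $\eta'>0$ on $(0,t_0)$.

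The paper transports this bound into the interior via the second variation of volume. Set $f(t)=\int_{B_{r_0}(0)}R^\circ\,dV_{g(t)}$ with $R^\circ=P+(n-1)Q$. The boundary flux $\partial_{n_g}R^\circ=-2\eta'+2Q\eta$ and the pinching ($P^2-2(n-1)PQ\le0$, $-(n-1)(n-3)Q^2\le0$) give $f'\le2(1-n)f+\int_{\partial M}2Q\eta\,dS_{g(t)}$, whose last term is $\le0$ and eventually $\le-\epsilon$. Hence $f\le-\epsilon_1$ and $\frac{d}{dt}\mathrm{Vol}_{g(t)}=-f\ge\epsilon_1$ for large $t$, so the volume grows linearly. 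Since $a,b$ increase in $t$, monotone convergence gives $\mathrm{Vol}_{g_\infty}(B_{r_0}(0))=\infty$, which for a rotationally symmetric hyperbolic metric is completeness. The factor $n-2$ in the boundary bound and the sign of $-(n-1)(n-3)Q^2$ are where $n\ge3$ is genuinely used.
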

Combining the variable transformation $(\ref{equn_vartransf})$ with Theorem 1.2 and Theorem 1.3 in \cite{Gi} by Gianniotis, one obtains that there exists a unique solution to $(\ref{equn_NRF1})-(\ref{equn_Ninc2})$ of $C^{2k+\alpha,k+\frac{\alpha}{2}}$ on $\overline{M}\times[0,\epsilon]$ for any $\alpha\in(0,1)$ and some $\epsilon>0$ under the compatibility conditions $(\ref{equn_comptk})$ for some $k\geq2$. Since $g\big|_{t=0}$ and the boundary data are rotationally symmetric, the solution $g(t)$ has the form
\begin{align}\label{equn_metricpolar1}
g(t)=a(r,t)^2dr^2+b(r,t)^2g_{\mathbb{S}^{n-1}},
\end{align}
for $(r,t)\in [0,r_0]\times[0,T)$, and hence the boundary condition $(\ref{equn_Nbdc1})$ is equivalent to the boundary condition
\begin{align}
\text{II}_{g}=\frac{\eta}{n-1}g^T,
\end{align}
with $\text{II}_g$ the second fundamental form of the boundary, as posed in \cite{Sh,Co1,Pu}.

It turns out that the sectional curvature of $g(t)$ maintains less than $-1$ for $t>0$, see $(\ref{inequn_pin1})$ in Section \ref{section_3}, and the solution $g(t)$ exists for all time $t$, see Theorem \ref{thm_existlongtime}. In Section \ref{section_4}, we use a global argument to conclude the locally uniform bounds on the rotationally symmetric metric $g(t)$ for $t>0$, based on which we then show the locally uniform lower bounds on the sectional curvature of $g(t)$ for $t>0$. Thus, by applying the standard regularity theory of the parabolic equations, we obtain that $g(t)$ converges locally uniformly to a rotationally symmetric locally hyperbolic metric $g_{\infty}$ in the interior of $\overline{M}$. Then in Section \ref{section_5}, we show that the volume of $\overline{M}$ with respect to $g(t)$ goes to infinity as $t\to\infty$ based on an integration argument of the evolution equation satisfied by certain sectional curvature and the second variation of the volume of $\overline{M}$, and finally prove that $g_{\infty}$ is complete in the interior $M$ of $\overline{M}$. Notice that when $r_0>0$ is sufficiently large, the boundary data $\eta(t)$ could be sufficiently close to $(n-1)$ for $t\geq0$.

\vskip0.2cm
The Ricci flow on compact surfaces with boundary has been better understood, see \cite{LT,Br,CM}. In \cite{Br}, under the condition that the boundary is totally geodesic along the Ricci flow, Brendle showed the convergence of the solution to the normalized Ricci flow, to a metric with constant Gauss curvature continuous up to the boundary with totally geodesic boundary; this convergence result is generalized to the case when the Gauss curvature of the initial metric is positive and the geodesic curvature on the boundary $k=\psi(t)\geq0$ depends only on $t$ and is non-increasing in $t$.

In \cite{Li3}, on a compact surface $\overline{M^2}$ with boundary, the author employed the initial-boundary value problem, with prescribed geodesic curvature on the boundary, of the normalized Ricci flow $(\ref{equn_NRicciflow})$, which is equivalent to $(\ref{equn_ut2})-(\ref{equn_uint2})$, and studied the locally uniform convergence of the solution to a complete hyperbolic metric in $M$. In particular, he showed that when the boundary geodesic curvature $k_g=\eta(p,t)\geq 1+\epsilon$ for some $\epsilon>0$, there exists $C=C(\epsilon)>0$, when $u_0\geq C(\epsilon)$ on $\overline{M}$ and satisfies the compatibility condition with $\eta$ on $\partial M$, the solution $g(t)$ exists for all $t\geq0$, and converges locally uniformly to the complete hyperbolic metric in $M$. But when the initial condition is $(\ref{equn_Ninc2})$, there's no explicit boundary data $k_g=\eta$ in \cite{Li3} to support the conclusion of such a convergence result. The following theorem gives a partial answer to this problem.

\begin{thm}\label{thm_convern2}
Let $\overline{M}=\overline{B_{r_0}}(0)$ be a closed geodesic ball of radius $r_0>0$ on the hyperbolic space $(\mathbb{H}^2,g_{-1})$. Assume that $\eta=\eta(t)\in C^k([0,\infty))$ is non-decreasing for $t\geq0$ and $\eta'(t)>0$ on $(0,t_0)$ for some $t_0>0$. Moreover, suppose that
 \begin{align}\label{inequn_etaupperbd}
\eta\leq y(t)^{\frac{1}{3}}-2,
\end{align}
 on $\partial M\times[0,\infty)$, where $y(t)\in C^3([0,\infty))$ is some positive function satisfying
\begin{align}\label{inequn_ODEgrowth}
y'\geq 3y+1
\end{align}
for $t\in[0,\infty)$. Assume that there exists a constant $T>0$ and a small constant $\epsilon>0$ such that
\begin{align}\label{inequn_etainfty}
\eta'(t)\geq \epsilon (1+t)^{-1}(\ln(1+t))^{-1}
 \end{align}
 for $t\geq T$.  Then the solution $g(t)$ to $(\ref{equn_NRF1d2})-(\ref{equn_Ninc2d2})$ converges locally uniformly to a complete hyperbolic metric in $M$ as $t\to\infty$, under the compatibility conditions $(\ref{equn_comptk})$ with the mean curvature $H_{g_{-1}}\big|_{\partial M}$ replaced by the geodesic curvature $k_{g_{-1}}\big|_{\partial M}$ for some $k\geq2$.
\end{thm}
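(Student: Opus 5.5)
In dimension two the normalized Ricci flow on $\overline{B_{r_0}}(0)$ is conformal. Writing $g(t)=e^{2u}g_{-1}$, or with a rotationally symmetric conformal factor relative to the flat disk, the flow reduces to a scalar equation. With $g=e^{2w}|dx|^2$ on the Euclidean disk $D_{\rho_0}$ corresponding to $B_{r_0}(0)$, this is
\[
w_t=e^{-2w}\Delta w+1 .
\]
The boundary condition becomes a Neumann-type condition $e^{-w}(\partial_\nu w+\rho_0^{-1})=\eta(t)$, and the initial datum is the hyperbolic factor $w_0=\ln\frac{2}{1-|x|^2}$. This is the system $(\ref{equn_ut2})$--$(\ref{equn_uint2})$ referred to above. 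The plan has three steps: long-time existence, a local uniform upper bound with monotone convergence, and completeness of the limit.

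\textbf{Long-time existence and monotonicity.} Differentiating the equation in $t$, the quantity $v=w_t$ satisfies a linear parabolic equation, since $v=-(K+1)$ where $K$ is the Gauss curvature. At $t=0$ we have $v=0$ because $K=-1$. On the boundary, the relation $\partial_\nu w = \eta e^{w}-\rho_0^{-1}$ differentiates to an oblique condition on $v$ whose forcing term is $\eta'(t)\ge0$. The maximum principle, with the strict Hopf lemma on $(0,t_0)$, then gives $w_t>0$ for $t>0$, i.e.\ $K<-1$, matching $(\ref{inequn_pin1})$ in higher dimensions. Hence $w$ is nondecreasing in $t$. Existence for all $t$ follows once $w$ is bounded on $\overline{M}\times[0,T]$ for every finite $T$; then parabolic Schauder estimates with the oblique boundary condition allow the short-time solution to be continued.

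For the upper bound I would build a supersolution from the ODE in $(\ref{inequn_ODEgrowth})$. The function $y(t)^{1/3}$, compared with $\eta+2$ through $(\ref{inequn_etaupperbd})$, controls $e^{w}$ on the boundary. Since $w_t\le 1+e^{-2w}\Delta w$ and $\Delta w\le0$ near the maximum, the bound $w\le C+t$ comes from comparing with a barrier of the form $w_0+\frac13\ln y(t)+c$. The constant $2$ and the growth rate $y'\ge 3y+1$ are exactly what is needed for the barrier's boundary flux to dominate $\eta e^{w}$.

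\textbf{Local uniform bound and convergence.} Because $w$ is monotone in $t$, it suffices to bound $w$ uniformly on compact subsets of $M$, independently of $t$. I would compare with the Loewner--Nirenberg solution $w_\infty$ on $D_{\rho_0}$, which is the complete hyperbolic metric and blows up at $\partial D_{\rho_0}$, or with its scaled versions on slightly larger disks $D_{\rho}$, $\rho>\rho_0$, which are finite on $\overline{D_{\rho_0}}$. The maximum principle in the interior gives $w\le w_\infty$, because $w_\infty$ is a stationary solution that is infinite on the boundary, while $w$ stays finite for each finite $t$. With the monotone bound in hand, $w(\cdot,t)$ increases to a limit $\bar w\le w_\infty$. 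Interior parabolic regularity (the Krylov--Safonov/Schauder theory) upgrades this to locally uniform smooth convergence. The limit satisfies $\Delta\bar w=e^{2\bar w}$, so it is hyperbolic.

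\textbf{Completeness (main obstacle).} To identify $\bar w=w_\infty$ one must show $\bar w\to\infty$ at $\partial D_{\rho_0}$; by the uniqueness of the L-N solution this is equivalent to completeness. The boundary values $w|_{\partial M}$ must tend to infinity, and the hypothesis $(\ref{inequn_etainfty})$ is designed for this. Integrating $\eta'$ gives $\eta(t)\gtrsim\epsilon\ln\ln(1+t)\to\infty$.

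I would first try a subsolution argument. Near the boundary, with $s$ the distance to the boundary, set $\underline w=\ln\frac{1}{s+\delta(t)}+c$ with $\delta(t)\to0$. The requirement $e^{-\underline w}(\partial_\nu\underline w+\rho_0^{-1})\le\eta$ at the boundary and the subsolution inequality in the interior give an ODE constraint on $\delta$. Showing that this constraint can be met with $\delta\to0$ under the growth rate $\eta'\ge\epsilon/((1+t)\ln(1+t))$ is the delicate step. Alternatively, integrate $w_t=-(K+1)$ over $M$ and use Gauss--Bonnet, $\int K\,dA+\int k_g\,ds=2\pi$, to get $\frac{d}{dt}\mathrm{Area}\ge$ a positive multiple of $\eta\cdot\mathrm{Length}$. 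This should force the boundary length, and hence the boundary values of $w$, to infinity. With boundary blow-up established, $\bar w=w_\infty$ follows, and so does the theorem.
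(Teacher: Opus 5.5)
Your interior comparison with the Loewner--Nirenberg solution and the monotone-limit argument are fine. The decisive gap is the completeness step, which you leave open. The subsolution route stops at the ``delicate step'', and the Gauss--Bonnet route rests on an inequality that is false. Integrating $w_t=-(K+1)$ and using Gauss--Bonnet gives the exact identity
\[
\frac{dA}{dt}=2\eta L-4\pi-2A .
\]
At $t=0$ the left side vanishes (since $K\equiv-1$) while $\eta L=2\pi\cosh r_0>0$, so no bound $A'\geq c\,\eta L$ holds. Nor do you show that $L$ or $w|_{\partial M}$ tends to infinity.

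The missing idea is to make the area itself blow up and read completeness off from that. Your own identity does this once written exactly. Since $w$ is nondecreasing, $L(t)\geq L(0)=2\pi\sinh r_0$, so
\[
A'+2A\geq 4\pi\eta\sinh r_0-4\pi .
\]
By $(\ref{inequn_etainfty})$ we have $\eta\to\infty$, hence $A(t)\to\infty$. The paper argues one derivative higher: using $\partial_{n_g}P=\eta P-\eta'$ and $P<0$, it gets $\frac{d}{dt}\int_M-2P\,dV_g\geq 2\eta'|\partial M|_{g(0)}+4\int_M P\,dV_g$ and integrates twice. Then $e^{2w(\cdot,t)}\uparrow e^{2\bar w}$ gives $\mathrm{Area}_{\bar g}(M)=\lim_t A(t)=\infty$ by monotone convergence. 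A rotationally symmetric hyperbolic metric on the disk is a geodesic ball of radius $S$ with area $2\pi(\cosh S-1)$, so $S=\infty$. Your boundary route would also close: $\bar w\geq w(\cdot,t)$ and lower semicontinuity give $\liminf_{x\to\partial M}\bar w\geq\sup_t w|_{\partial M}$. But this only works after $w|_{\partial M}\to\infty$ is actually proved.

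Your long-time-existence barrier also fails. For $\bar w=w_0+\frac13\ln y(t)+c$ the boundary geodesic curvature is $e^{-\bar w}(\partial_\nu\bar w+\rho_0^{-1})=e^{-c}y(t)^{-1/3}\coth r_0\to0$. A supersolution must have boundary geodesic curvature at least $\eta\geq\coth r_0$. Adding a time-dependent constant to $w$ only lowers the boundary curvature, so any working barrier must steepen at $\partial M$. The paper does not construct one; it invokes Theorem 1.3 of \cite{Li3} (Theorem \ref{thm_upperbdun2}), which is where $(\ref{inequn_etaupperbd})$--$(\ref{inequn_ODEgrowth})$ enter.

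In the monotonicity step, the condition $\partial_\nu v=\eta e^{w}v+\eta'e^{w}$ has the unfavorable sign. At a negative boundary minimum of $v$ both sides can be nonpositive, so the plain maximum principle plus Hopf gives no contradiction. The paper's main new two-dimensional ingredient is exactly the weighted function $\theta=e^{-at}Pe^{-N\tau}$, with $\partial_{n_g}\tau\approx1$ and $N$ large, which flips this sign. Finally, your flow equation should read $w_t=e^{-2w}\Delta w-1$.
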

Here in the case of dimension two, in order to show the limit metric of the Ricci flow is complete in the interior of $\overline{M}$, we have used the very restrictive inequality $(\ref{inequn_etainfty})$, which is different from the higher dimensional cases and might not be necessary. This can be attributed to the absence of a boundary integral term in the second variation of the volume of $M$ for dimension $2$, in comparison to the case $n\geq3$. Moreover, there are no a priori estimates of the Gaussian curvature of the solution $g(t)$ near $\partial M$ for $n=2$. Hence, we have to combine the conclusion that the Gaussian curvature of $g(t)$ maintains less than $-1$ with the comparison argument in \cite{Li3} to achieve the long-time existence and locally uniform convergence of $g(t)$. For details, see Section \ref{section_6}.

As a direct consequence of Theorem \ref{thm_convern2} here and Theorem 1.3 in \cite{Li3}, we obtain:
\begin{cor}
Let $\overline{M}=\overline{B_{r_0}}(0)$ be a closed geodesic ball of radius $r_0>0$ on the hyperbolic space $(\mathbb{H}^2,g_{-1})$. Let $\eta=\eta(t)$ satisfies the conditions in Theorem \ref{thm_convern2}. Assume $u_0\in C^{2+\alpha}(\overline{M})$ and $\psi\in C^{1+\alpha,\frac{1}{2}+\frac{\alpha}{2}}(\partial M\times[0,T])$ for all $T>0$, and also the compatibility condition holds on $\partial M\times\{0\}$:
\begin{align}
k_{e^{2u_0}g_{-1}}=\psi(\cdot,0).
\end{align}
Suppose $u_0\geq0$ on $\overline{M}$, and $\psi$ satisfies
\begin{align*}
\eta\leq \psi\leq y_1(t)^{\frac{1}{3}}-2
\end{align*}
on $\partial M\times[0,\infty)$, for a positive function $y_1(t)\in C^3([0,\infty))$ satisfying $(\ref{inequn_ODEgrowth})$ for $t\in[0,\infty)$. Then there exists a unique solution $g(t)$ for all $t>0$ to $(\ref{equn_NRF1})$ with the initial-boundary conditions:
\begin{align*}
&k_{g(t)}=\psi,\,\,\text{on}\,\,\partial M\times[0,\infty),\\
&g(0)=e^{2u_0}g_{-1}\,\,\text{on}\,\,\overline{M}.
\end{align*}
Moreover, $g(t)$ converges locally uniformly to a complete hyperbolic metric in the interior of $\overline{M}$, as $t\to\infty$.
\end{cor}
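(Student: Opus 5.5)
The corollary follows from Theorem \ref{thm_convern2} together with Theorem 1.3 of \cite{Li3}. The plan has three steps: get long-time existence, trap the solution between two complete-in-the-limit barriers, and then pass to the limit.

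\textbf{Step 1: short-time existence and the conformal reduction.} In dimension two the normalized Ricci flow preserves the conformal class. Writing $g(t)=e^{2u(t)}g_{-1}$, the problem becomes the scalar parabolic equation $(\ref{equn_ut2})$ with the Neumann-type condition $\partial_\nu u + k_{g_{-1}} = \psi e^{u}$ on $\partial M$ and initial data $u_0$. Local existence and uniqueness in $C^{2+\alpha,1+\frac{\alpha}{2}}$ follow from standard parabolic theory, using the compatibility condition on $\partial M\times\{0\}$.

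\textbf{Step 2: long-time existence and a uniform-in-time lower barrier.} Let $\bar g(t)=e^{2w(t)}g_{-1}$ be the solution of Theorem \ref{thm_convern2} with boundary data $\eta$; it exists for all time and converges to a complete hyperbolic metric. We have $u_0\ge 0=w(0)$ and $\psi\ge\eta$ on the boundary. For this kind of Neumann condition, a larger boundary geodesic curvature corresponds to a supersolution of the boundary inequality in the right direction, so the comparison principle for $(\ref{equn_ut2})$ with these nonlinear boundary conditions (as used in \cite{Li3}) gives $u\ge w$ as long as $u$ exists. For the upper side, $\psi\le y_1^{1/3}-2$ with $y_1'\ge 3y_1+1$ is exactly the hypothesis of Theorem 1.3 in \cite{Li3}. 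That theorem supplies an explicit supersolution (built from $y_1$) keeping $u$ bounded above on each finite time interval, which yields long-time existence. The same theorem also gives locally uniform upper bounds in $M$ as $t\to\infty$, via comparison with the solution of the Loewner--Nirenberg problem, which is the maximal complete hyperbolic conformal metric.

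\textbf{Step 3: convergence.} Interior two-sided bounds together with parabolic regularity give locally uniform $C^k$ bounds. Since $\psi\ge\eta$ makes $g(t)$ dominate a flow converging to a complete metric, the lower bound $u\ge w\to w_\infty$ forces any limit to blow up at $\partial M$. The remaining point is to show that the limit exists rather than only along subsequences. For this I would invoke Theorem 1.3 of \cite{Li3}, whose conclusion is that once the solution eventually dominates data tending to infinity suitably, $u(t)$ converges locally uniformly to the unique L-N solution $u_\infty$ (uniqueness by \cite{ACF}). The lower barrier $w$ provided by Theorem \ref{thm_convern2} supplies exactly the missing lower control that \cite{Li3} needed when the data are not large.

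\textbf{Main obstacle.} The key step is Step 3: checking that the hypotheses of Theorem 1.3 in \cite{Li3} are met given only the lower comparison with $w$. If that theorem requires $u_0\ge C(\epsilon)$, I would restart the flow at a large time $t_1$. By then $u(t_1)\ge w(t_1)$ is large on compact sets, but not necessarily near $\partial M$. So near the boundary I would instead use a squeezing argument: $w_\infty\le\liminf u$ and $\limsup u\le u_\infty$, with $w_\infty=u_\infty$ by uniqueness of complete hyperbolic conformal metrics.
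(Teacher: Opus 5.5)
Your proposal is correct and follows the paper's route. The paper states the corollary as a direct consequence of Theorem \ref{thm_convern2} and Theorem 1.3 of \cite{Li3}: the comparison theorem with $u_0\geq 0$ and $\psi\geq\eta$ makes the solution of Theorem \ref{thm_convern2} a lower barrier, the bound $\psi\le y_1^{1/3}-2$ gives finite-time upper bounds and hence long-time existence, and your squeezing $w_\infty\le\liminf u\le\limsup u\le u_{LN}=w_\infty$ (by uniqueness of the complete hyperbolic conformal metric) gives convergence. The restart idea in your last paragraph is unnecessary, and if Theorem 1.3 of \cite{Li3} does not state the interior bound $\limsup_{t\to\infty}u\le u_{LN}$ explicitly, you can prove it directly, independently of the boundary data: compare on $B_{r_0-\delta}(0)$ with the supersolution $u_{LN,\delta}-\frac{1}{2}\ln\lambda(t)$, where $u_{LN,\delta}$ is the Loewner--Nirenberg solution on the smaller ball and $\lambda'=2\lambda(1-\lambda)$ with $\lambda(0)$ small, then let $\delta\to0$.
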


Throughout of this article, the notation $\overline{M}=\overline{B_{r_0}}(0)$ always means the closed $r_0$-ball centered at the origin in the Euclidean space $\mathbb{R}^n$, and the coordinates on $\overline{M}$ will always be expressed using the polar coordinates $(r,\mathbb{S}^{n-1})$ on $\mathbb{R}^n$ with $0\leq r\leq r_0$, and moreover, the starting hyperbolic metric of the Ricci flow on $\overline{M}$ is expressed as
\begin{align}
g\big|_{t=0}=dr^2+\sinh^2(r)g_{\mathbb{S}^{n-1}},
\end{align}
for $0\leq r\leq r_0$, and $(\overline{M}, g(0))$ is the geodesic $r_0$-ball in $\mathbb{H}^n$. The notation $\overline{B_r}(0)$ always means the $r$-ball centered at the origin in $\mathbb{R}^n$, with $B_r(0)$ its interior and $\partial B_r(0)$ its boundary, for $0\leq r\leq r_0$.

\vskip0.2cm
{\bf Acknowledgements.} The author would like to thank Yuxing Deng, Yunrui Zheng and Tao Tao for helpful discussion.

\vskip0.2cm

\section{Preliminaries}

Let $\overline{M}=\overline{B_{r_0}}(0)\subseteq \mathbb{R}^n$, for $n\geq 2$. By the discussion below Theorem \ref{thm_convern345} in the introduction, there exists a unique solution $g(t)$ to $(\ref{equn_NRF1})-(\ref{equn_Ninc2})$ on $\overline{M}\times[0,T)$ with $T>0$ the largest existence time. Moreover, $g(t)$ is rotationally symmetric and of the form:
\begin{align}\label{equn_metricpolar}
g=a(r,t)^2dr^2+b(r,t)^2g_{\mathbb{S}^{n-1}},
\end{align}
under the polar coordinates on $\overline{M}$, where for $0\leq r\leq r_0$,
\begin{align}
a(r,0)=1,\,\,\,\,b(r,0)=\sinh(r).
\end{align}
Direct computation shows that the Ricci flow is not a conformal flow for $n\geq3$, unless $g(t)$ has constant sectional curvature for $t\geq0$, which is contrary to our conditions. Denote $s$ the distance function to the origin under the metric $g(t)$ for $t\geq0$, i.e.,
\begin{align}\label{equn_saformul}
s=s(r,t)=\int_0^ra(x,t)dx,
\end{align}
and hence by the regularity of $g$ at the origin,
\begin{align}\label{equn_originr}
\frac{d}{ds}b(0,t)=1,\,\,\,\frac{d^{2k}}{d s^{2k}}b(0,t)=0,
\end{align}
for $t\geq0$ and any integer $k\geq0$. Therefore, for any fixed $t\geq0$, we have
\begin{align*}
&\frac{\partial}{\partial r}=a\,\frac{\partial}{\partial s},\\
&g(t)=ds^2+ b^2g_{\mathbb{S}^{n-1}}.
\end{align*}
By direct calculation, we obtain the formula for the Ricci tensor:
\begin{align}
\text{Ric}_g=-(n-1)b^{-1}b_{ss}''a^2dr^2\,-\,b[b_{ss}''-(n-2)b^{-1}(1-(b_s')^2)]\,g_{\mathbb{S}^{n-1}}.
\end{align}
Since $g(t)$ is rotational symmetric, by adopting the same notations as in \cite{AK,DG}, we denote the sectional curvatures of the $2$-plane perpendicular to the fibers $\{r\}\times \mathbb{S}^{n-1}$ and of the $2$-planes tangential to the fibers by $K$ and $L$ respectively, and hence the curvature of $g$ is entirely described by $K$ and $L$ with the formulas
 \begin{align}
 K=-b^{-1}b_{ss}'',\,\,\,L=b^{-2}(1-(b_s')^2).
 \end{align}
By $(\ref{equn_originr})$, when $g(t)$ is of $C^{k+2}$ in $\overline{M}$, we have $K(\cdot,t),L(\cdot,t)\in C^k(\overline{M})$. For convenience of later discussion, we introduce the following curvature functions:
\begin{align}
&F=F(r,t)\triangleq-(n-1)b^{-1}b_{ss}''=(n-1)K,\\
&G=G(r,t)\triangleq -b^{-1}b_{ss}''+(n-2)b^{-2}(1-(b_s')^2)=K+(n-2)L,\\
&P=F+n-1,\,\,\,\,Q=G+n-1.
\end{align}
Therefore, the scalar curvature satisfies the formula
\begin{align*}
R_g=F+(n-1)G.
\end{align*}
From $(\ref{equn_NRF1})$, we derive the following evolution equations for $a$ and $b$
\begin{align}
&\label{equn_NRFa}a_t=(n-1)a(b^{-1}b_{ss}''-1) = -P\,a,\\
&\label{equn_NRFb}b_t=b[b^{-1}b_{ss}''-(n-2)b^{-2}(1-(b_s')^2)-n+1]=-Q\,b,
\end{align}
and hence for $(r,t)\in[0,r_0]\times[0,T)$, we have
\begin{align}
&\label{equn_aintt0}a(r,t)=a(r,0)e^{-\int_0^tP(r,\tau)d\tau},\\
&\label{equn_bintt0}b(r,t)=b(r,0)e^{-\int_0^tQ(r,\tau)d\tau}.
\end{align}

\begin{lem}\label{lem_Hp}
If $\eta(t)>0$ for $t\in[0,T)$, then $b_r'(r,t)>0$ for $(r,t)\in[0,r_0]\times[0,T)$.
\end{lem}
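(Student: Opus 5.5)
The plan is to pass to the arclength derivative $w\triangleq b_s'=a^{-1}b_r'$. Since $a>0$ by $(\ref{equn_aintt0})$, it suffices to show $w>0$ on $[0,r_0]\times[0,T)$. I would derive a linear parabolic equation for $w$ and apply the maximum principle, with positive data at $t=0$, at the origin and on $\partial M$. Note also that $b(r,t)>0$ for $r>0$ by $(\ref{equn_bintt0})$, since $b(r,0)=\sinh(r)>0$.

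\emph{Step 1 (boundary data).} The initial value is $w(r,0)=\cosh(r)>0$. At the origin $w(0,t)=1$ by $(\ref{equn_originr})$, so by continuity $w\geq \frac12$ on $[0,\delta]\times[0,T']$ for any $T'<T$ and some $\delta=\delta(T')>0$. On $\partial M$ the level sphere $\{r_0\}\times\mathbb{S}^{n-1}$ has second fundamental form $b^{-1}b_s'\,g^T$. Hence its mean curvature is $H=(n-1)b^{-1}b_s'$, and the boundary condition gives
\[
w(r_0,t)=\frac{\eta(t)\,b(r_0,t)}{n-1}>0 .
\]

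\emph{Step 2 (evolution of $w$).} From $(\ref{equn_NRFa})$ we get $\partial_t\partial_s=\partial_t(a^{-1}\partial_r)=P\,\partial_s$. Combining this with $(\ref{equn_NRFb})$ gives $w_t=-(Qb)_s+Pw$. Next I would substitute
\[
Qb=-b_{ss}''+(n-2)b^{-1}(1-w^2)+(n-1)b
\]
and differentiate. This should yield
\[
w_t=w_{ss}''+2(n-2)b^{-1}w\,w_s'+\big[(n-2)b^{-2}(1-w^2)+P-(n-1)\big]w .
\]
This is a linear equation $w_t=w_{ss}''+c_1w_s'+c_0w$. On $[\delta,r_0]\times[0,T']$ its coefficients $c_1,c_0$ are bounded, because there $b$ is bounded below and $g$ is smooth. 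In the $r$ variable the operator is uniformly parabolic, since $a$ is bounded above and below there.

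\emph{Step 3 (maximum principle).} I would set $v=e^{-Ct}w$ with $C>\sup|c_0|$. Suppose $v$ first reaches $0$ at some point $(r_1,t_1)$ with $t_1>0$ in the region $[\delta,r_0]\times[0,T']$. Step 1 rules out $r_1=\delta$ and $r_1=r_0$, so $r_1$ is interior. At that point $v_t\le0$, $v_s=0$, $v_{ss}\geq0$, which contradicts $v_t=v_{ss}+c_1v_s+(c_0-C)v$ with $v=0$ only in the borderline case. To make it strict I would instead apply the argument to $v+\epsilon e^{t}$, or simply invoke the standard weak maximum principle for linear equations with bounded coefficients. This gives $w>0$ on $[\delta,r_0]\times[0,T']$. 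Together with $w\ge\frac12$ on $[0,\delta]$ and the arbitrariness of $T'$, we conclude $b_r'=aw>0$.

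I expect the main obstacle to be the singular coefficients $b^{-1}$ and $b^{-2}$ at the origin. Cutting off at $r=\delta$, where $w$ is already controlled by the smoothness of the solution and $(\ref{equn_originr})$, should handle this. A secondary point is checking the mean curvature formula and the sign convention, namely that $H>0$ for the outward direction corresponds to $b_s'>0$.
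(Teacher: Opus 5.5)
Your proposal is correct and follows the paper's own route. The paper likewise derives a parabolic equation for $b_s'$, and yours agrees with it after substituting $P-(n-1)=-(n-1)b^{-1}w_s'$. It then applies the maximum principle using $(\ref{equn_originr})$ at the origin and $H_g=(n-1)b^{-1}b_s'=\eta>0$ on $\partial M$.

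Your cutoff at $r=\delta$ properly handles the singular coefficients that the paper glosses over. One correction: the $\epsilon e^{t}$ perturbation yields only $w\geq0$. For strict positivity, invoke the strong maximum principle, or compare $w$ with the strict subsolution $m e^{-C't}$, where $C'>\sup|c_0|$ and $0<m<\min w$ on the parabolic boundary.
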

\begin{proof}
Recall that the mean curvature of $\partial B_r(0)$ for $0<r\leq r_0$ induced by the metric $g(t)$ is given by the formula
\begin{align}\label{equn_meanH}
H_g(r,t)=(n-1)b^{-1}b_s'.
\end{align}
By $(\ref{equn_NRFb})$ and $(\ref{equn_NRFa})$, we have
\begin{align}\label{equn_bst}
(b_s')_t'&=(a^{-1}b_r')_t'\\
&=(b_s')_{ss}''+(n-3)b^{-1}b_s'(b_s')_s'+(n-2)b^{-2}[1-(b_s')^2]b_s',\nonumber
\end{align}
By continuity of $b_s'$ for $(r,t)\in[0,r_0]\times[0,T)$, using the boundary condition and $(\ref{equn_originr})$, we apply the maximum principle to have
\begin{align}
b_s'>0,\,\,\,\,\,b_r'=a\,b_s'>0,
\end{align}
for $(r,t)\in[0,r_0]\times[0,T)$.

\end{proof}
For any $t\geq0$, using the Taylor expansion of $b$ near $r=0=s$,
\begin{align*}
b(r,t)=s+\frac{s^3}{3!}\partial_s^3b(0,t)+\frac{s^5}{5!}\partial_s^5b(0,t)+O(s^7),
\end{align*}
we get
\begin{align}
Q-P=(n-2)b^{-2}[1-(b_s')^2]+(n-2)b^{-1}b_{ss}''=O(s^2).
\end{align}
In particular,
\begin{align}\label{equn_originPQ}
P(0,t)=Q(0,t)
\end{align}
for $t\geq0$. Direct computation yields
\begin{align}\label{equn_Pn-1Qb0}
P-(n-1)Q=-(n-1)(n-2)[b^{-2}(1-(b_s')^2)+1]=-(n-1)(n-2)[b^{-2}+1-\frac{H_g^2}{(n-1)^2}]
\end{align}
where $H_g=H_g(r,t)$ is the mean curvature of $\partial B_r(0)$ with respect to the metric $g(t)$, and hence,
\begin{align}\label{equn_Pn-1Qs}
\frac{\partial}{\partial s}(P-(n-1)Q)=-2(P-Q)H_g,
\end{align}
which will play an important role in the discussion on the signs of $P$ and $Q$. In particular,
\begin{align}
&\label{equn_Pn-1Qb}P-(n-1)Q=-(n-1)(n-2)[b^{-2}+1-\frac{\eta(t)^2}{(n-1)^2}],\\
&\label{equn_nPn-1Qb}\frac{\partial}{\partial n_g}(P-(n-1)Q)=-2(P-Q)\,\eta(t),
\end{align}
on $\partial M\times[0,T)$, where $n_g$ is the outer unit normal vector field. Differentiating $H_g(r,t)$ with respect to $t$, and substituting $(\ref{equn_NRFb})$ and $(\ref{equn_bst})$ into the result, we obtain
\begin{align}
\partial_tH_g=H_gP-(n-1)\partial_sQ,
\end{align}
and hence, we have
\begin{align}\label{equn_DtHb}
\eta'(t)=\eta \,P\,-\,(n-1)\frac{\partial}{\partial n_g}Q
\end{align}
on $\partial M\times[0,T)$.

We now compute how $P$ and $Q$ evolves under the flow. Similar to $(\ref{equn_bst})$, we obtain
\begin{align*}
(b_{ss}'')_t''=&\,(b_{ss}'')_{ss}''+(n-3)b^{-1}b_s'(b_{ss}'')_s'-2b^{-1}(b_{ss}'')^2-(4n-9)b^{-2}(b_s')^2b_{ss}''\\
&+(n-2)b^{-1}b_{ss}''+(n-1)b_{ss}''-2(n-2)b^{-3}(b_s')^2(1-(b_s')^2).
\end{align*}
Recall that for a rotationally symmetric function $f$ the Laplacian acting on $f$ satisfies that
\begin{align*}
\Delta_{g(t)}f=f_{ss}''+(n-1)b^{-1}b_s'f_s'.
\end{align*}
Therefore, for $(r,t)\in(0,r_0]\times[0,T)$ and dimension $n\geq3$,
\begin{align}
&\label{equn_envP}\partial_tP=\Delta_gP+2P(P+1-n)-2(n-1)b^{-2}(P-Q)-\frac{2(n-1)}{n-2}(P-Q)^2,\\
&\label{equn_envQ}\partial_tQ=\Delta_gQ+2Q(Q+1-n)+2b^{-2}(P-Q)+\frac{2}{n-2}(P-Q)^2,
\end{align}
where $g=g(t)$.

For $n=2$, we have
\begin{align}\label{equn_PQn2}
P=Q=-b^{-1}b_{ss}''=K_g+1=\frac{1}{2}(R_g+2),
\end{align}
where $K_g$ is the Gaussian curvature and $R_g$ is the scalar curvature and hence,
\begin{align}\label{equn_n2Pt}
\partial_tP=\Delta_gP+2P(P-1).
\end{align}
The geodesic curvature $k_g$ of $\partial B_r(0)$ with respect to $g(t)$ has the expression $k_g=b(r,t)^{-1}\partial_sb(r,t)$. Thus we obtain
\begin{align}
\partial_tk_g=k_gP-\partial_sP,
\end{align}
and hence,
\begin{align}\label{equn_Dtkb}
\eta'(t)=\eta \,P\,-\,\frac{\partial}{\partial n_g}P
\end{align}
on $\partial M\times[0,T)$. The initial-boundary problem $(\ref{equn_NRF1})-(\ref{equn_Ninc2})$ reduces to the following form
\begin{align}
&\label{equn_NRF1d2}\frac{\partial}{\partial t}g=-2(K_g+1)g,\,\,\text{in}\,\,\overline{M}\times[0,\infty),\\
&\label{equn_Nbdc1d2}k_g=\eta,\,\,\,\,\,\,\,\,\text{on}\,\,\partial M\times[0,\infty),\\
&\label{equn_Ninc2d2}g\big|_{t=0}=g_{-1},\,\,\,\,\text{on}\,\,\overline{M}.
\end{align}
By $(\ref{equn_aintt0})$, $(\ref{equn_bintt0})$ and $(\ref{equn_PQn2})$, we obtain
\begin{align}
&\label{equn_aPn2}a(r,t)=a(r,0)e^{-\int_0^tP(r,\tau)d\tau},\\
&\label{equn_bPn2}b(r,t)=b(r,0)e^{-\int_0^tP(r,\tau)d\tau},
\end{align}
for $(r,t)\in[0,r_0]\times[0,T)$. Let $u(r,t)=-\int_0^tP(r,\tau)d\tau$, for $(r,t)\in [0,r_0]\times[0,T)$. Therefore,
\begin{align*}
g(t)=e^{2u}g(0),
\end{align*}
for $t\in[0,T)$, and hence, the initial-boundary value problem $(\ref{equn_NRF1d2})-(\ref{equn_Ninc2d2})$ is reduced to the following (see also \cite{Li3})
\begin{align}
&\label{equn_ut}u_t=e^{-2u}(\Delta_{g(0)}u-K_{g(0)})-1,\,\,\text{in}\,\,\overline{M}\times[0,\infty),\\
&\label{equn_unb}\frac{\partial u}{\partial n_{g(0)}}+k_{g(0)}=\eta e^u,\,\,\text{on}\,\,\partial M\times[0,\infty),\\
&\label{equn_uint}u\big|_{t=0}=0,\,\,\,\text{in}\,\,\overline{M}.
\end{align}

\vskip0.2cm

\section{Preserving curvature upper bounds along the normalized Ricci flow}\label{section_3}
Let $\overline{M}$ be of dimension $n\geq3$. In this section we will show that the curvature functions defined in the preliminary satisfy
\begin{align}\label{inequn_pin1}
(n-1)Q\leq P\leq Q\leq 0,
\end{align}
in $\overline{M}\times[0,+\infty)$ and the solution $g(t)$ exists for all time $t\geq0$. First, we show that $(\ref{inequn_pin1})$ holds for $t\geq0$ small.

\begin{lem}\label{lem_P_Q-0}
Let $n\geq3$. Let $g(t)$ be a solution to $(\ref{equn_NRF1})-(\ref{equn_Ninc2})$ on $\overline{M}\times[0,T)$ for some $T>0$ under the compatibility conditions $(\ref{equn_comptk})$ for some $k\geq2$. Assume that $\eta=\eta(t)\in C^k([0,\infty))$ is non-decreasing for $t\geq0$ and $\eta'(t)>0$ on $(0,t_0)$ for some $t_0>0$.  Then there exists $\epsilon\in(0,T)$ such that $(\ref{inequn_pin1})$ holds on $M\times[0,\epsilon]$.
\end{lem}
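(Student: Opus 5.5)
The plan is to run a maximum principle argument on the three quantities
\[
Q,\qquad W\triangleq P-Q,\qquad X\triangleq P-(n-1)Q,
\]
so that $(\ref{inequn_pin1})$ reads $X\geq 0$, $W\leq 0$, $Q\leq 0$. At $t=0$ the metric is $g_{-1}$, so $P=Q=W=X=0$ identically. The signs on $(0,\epsilon]$ must therefore come from the boundary forcing $\eta'\geq 0$, with $\eta'>0$ on $(0,t_0)$, entering through $(\ref{equn_DtHb})$ and $(\ref{equn_nPn-1Qb})$. Since the solution is $C^{2k+\alpha,k+\frac{\alpha}{2}}$ up to $t=0$ with $k\geq2$, the quantities $P,Q$ are continuous up to $t=0$ and small on $\overline{M}\times[0,\epsilon]$. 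I will fix $\epsilon<t_0$ so small that $|P|,|Q|\leq\delta$ there, with $\delta$ tiny compared with $n-1$ and with $\min \eta=\eta(0)>0$.

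\textbf{Step 1: equation for $W$.} Subtracting $(\ref{equn_envQ})$ from $(\ref{equn_envP})$ gives
\[
\partial_t W=\Delta_g W+\Big[2(P+Q+1-n)-2n\,b^{-2}-\tfrac{2n}{n-2}W\Big]W .
\]
This is linear in $W$ with a coefficient bounded above (the $b^{-2}$ part is negative), and $W=O(s^2)$ at the origin. Interior maxima are therefore harmless, and $W\leq 0$ propagates once it is controlled on $\partial M$.

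\textbf{Step 2: boundary relations.} From $(\ref{equn_DtHb})$ and $(\ref{equn_nPn-1Qb})$,
\[
(n-1)\partial_{n_g}Q=\eta P-\eta',\qquad \partial_{n_g}X=-2\eta W ,
\]
\[
\partial_{n_g}W=-2\eta W+\tfrac{n-2}{n-1}(\eta P-\eta').
\]
I then argue on the first time at which the closed cone $\{X\geq0,\ W\leq0,\ Q\leq0\}$ is left, handling each face in turn.

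\begin{itemize}
\item \emph{Face $Q=0$.} As long as $P\leq Q\leq0$, we get $\partial_{n_g}Q\leq -\eta'/(n-1)\leq0$. A new positive boundary maximum of $Q$ is excluded by Hopf's lemma. An interior maximum is excluded by $(\ref{equn_envQ})$, since there $2Q(Q+1-n)<0$ and $2b^{-2}W+\frac{2}{n-2}W^2\leq 0$ for $|W|$ small.
\item \emph{Face $X=0$.} While $W\leq0$ we have $\partial_{n_g}X\geq0$, which rules out a negative boundary minimum of $X$. In the interior I will use $X=-(n-1)(n-2)(L+1)$ together with $(\ref{equn_Pn-1Qs})$: integrating $\partial_sX=-2HW\geq0$ from the origin, where $X(0,t)=-(n-2)Q(0,t)\geq0$, gives $X\geq0$ directly from $W\leq0$ and $Q\leq0$.
\item \emph{Face $W=0$.} This uses Step 1 together with the boundary identity for $\partial_{n_g}W$. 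At a boundary point where $W$ reaches $0$, the Hopf lemma needs $\partial_{n_g}W>0$. The identity gives $\partial_{n_g}W=\frac{n-2}{n-1}(\eta P-\eta')$, which is $<0$ when $P\leq0$ and $\eta'>0$, and $\leq 0$ in general. This is a contradiction, so Hopf's lemma excludes that configuration.
\end{itemize}
To make the argument strict, I will replace the cone by its $\varepsilon'$-perturbations, using for example $Q-\varepsilon' t$, and let $\varepsilon'\to0$. Where only $\eta'\geq0$ holds, the strict inequality from $\eta'>0$ on $(0,t_0)$ is exactly what the perturbation needs.

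\textbf{Main obstacle.} The hard part is the start at $t=0$. There all quantities vanish, so the usual ``first exit time'' argument has no strict initial room, and degenerate cases where $P$ and $Q$ vanish to high order in $t$ must be excluded. I would handle this by examining the leading order in $t$. Since $\eta^{(j)}(0)=0$ for $j\leq k$, the leading terms of $P$ and $Q$ solve the linearization at $g_{-1}$. This is a pair of heat-type equations, coupled only through the $b^{-2}W$ terms, with Neumann data $(n-1)\partial_n Q\approx-\eta'$. By the strong maximum principle for the linearized problem, this gives $Q<0$, $W\leq0$ and $X\geq0$ to leading order for $t\in(0,\epsilon]$. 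That supplies the strict initial sign needed to launch the perturbed maximum principle above.
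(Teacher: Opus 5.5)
You have correctly located the difficulty, but your treatment of it does not go through. Your face-by-face discussion is essentially the propagation step that the paper carries out in Theorem \ref{thm_PQn-1Q2}, and that step only works once strict inequalities hold on some interval $(0,t_0]$. The whole content of this lemma is the degenerate start at $t=0$, and there are four problems with how you handle it.
\begin{itemize}
\item \emph{There is no leading order in $t$.} By $(\ref{equn_comptk})$ you only know $\eta(t)-\eta(0)=o(t^k)$, with $\eta$ merely $C^k$; for instance $\eta'$ may behave like $e^{-1/t}$. The solution is only $C^{2k+\alpha,k+\frac{\alpha}{2}}$, and every available $t$-derivative of $P$ and $Q$ vanishes at $t=0$.
\item \emph{Signs of a linear response do not transfer.} If you instead mean the linear response $(P_L,Q_L)$ to the forcing $\eta'$, its signs pass to $(P,Q)$ only if the nonlinear error is pointwise smaller than $|Q_L|$ and $|W_L|$, uniformly in $x$ as $t\to0$. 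This must hold also at interior points far from $\partial M$, where everything is extremely small for small $t$, and near $r=0$, where $W$ and $W_L$ vanish for all $t$. Nothing in your proposal gives such a bound, and proving it is at least as hard as the lemma.
\item \emph{The linear problem has the same difficulty.} A sign statement for the linearized system is itself a maximum principle for a coupled system with zero initial data. If you had that maximum principle, it would apply directly to the nonlinear system with frozen coefficients.
\item \emph{The logic does not close.} What you say the linear analysis produces, namely $Q<0$, $W\le0$, $X\ge0$ on $(0,\epsilon]$, is the conclusion of the lemma, so it cannot be an input that ``launches'' a further argument. The perturbation $Q-\varepsilon' t$ still vanishes at $t=0$. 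Perturbing the $W$-face by anything that does not vanish at $r=0$ breaks your interior argument for $Q$, because the coupling $2b^{-2}W$ in $(\ref{equn_envQ})$ is unbounded at the origin; your unperturbed argument disposes of it only by using $W\le0$.
\end{itemize}

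What is missing is a mechanism that produces signs out of identically zero data. The paper gets it from the \emph{algebraic} boundary identity $(\ref{equn_Pn-1Qb})$ combined with $b(r_0,t)=b_0e^{-\int_0^tQ(r_0,\tau)d\tau}$; you only use its $s$-derivative. This yields $(\ref{equn_Pn-1Qb1})$, which ties $(P-(n-1)Q)(r_0,t)$ to $\xi(t)=\int_0^tQ(r_0,\tau)d\tau$ and to $\eta(t)-\eta(0)$. Inserting it into $(\ref{equn_nP-Qb})$, a contradiction argument comparing $I_1,\dots,I_4$ gives $P\le Q$ on $\partial M\times[0,\epsilon_2]$, hence on $\overline M$ by $(\ref{equn_P-Qevol})$ and $W(0,t)=0$. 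Then $(\ref{equn_QbODE0})$ becomes a linear ODE $\xi'-c\,\xi=f<0$ with $\xi(0)=0$. This forces $\xi<0$ and then $Q(r_0,t)<0$, with no initial room needed. Next, $Q<0$ in the interior follows from $(\ref{inequn_Qt})$, where $W\le0$ already removes the singular coupling, and $P>(n-1)Q$ follows from $(\ref{equn_Pn-1Qs})$.

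If you want to keep your cone argument, the repair is a genuinely \emph{outward} perturbation, not a linearization. Set $V=W/b^2$, which is smooth at $r=0$ because $W=O(s^2)$. For small $t$ the pair $(Q,V)$ satisfies a cooperative system with bounded coefficients, apart from a harmless radial drift $4b^{-1}b_s\partial_s$ in the $V$-equation.
\begin{itemize}
\item In the interior, the coefficient of $V$ in the $Q$-equation is $2+\frac{2b^2W}{n-2}>0$.
\item On the boundary, $(n-1)\partial_{n_g}Q=\eta b^2V+\eta Q-\eta'$ and $\partial_{n_g}V=-\frac{n+2}{n-1}\eta V+\frac{n-2}{n-1}b^{-2}(\eta Q-\eta')$. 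The cross terms are nonnegative and the forcing is nonpositive.
\end{itemize}
Now prove $Q<\varepsilon\phi$ and $V<\varepsilon\phi$ with $\phi=e^{At+N\tau}$, where $\partial_{n_g}\tau\ge\frac12$ on $\partial M$, choosing first $N$ and then $A$ large. These inequalities hold strictly at $t=0$, and each face is excluded directly at the first touching time. Letting $\varepsilon\to0$ gives $(\ref{inequn_pin1})$, using only $\eta'\ge0$.
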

\begin{proof}
Subtracting $(\ref{equn_envQ})$ from $(\ref{equn_envP})$ yields
\begin{align}\label{equn_P-Qevol}
\partial_t(P-Q)=\Delta_g(P-Q)+2(P-Q)[1-n-nb^{-2}-\frac{2}{n-2}(P-(n-1)Q)],
\end{align}
on $\overline{M}\times[0,T)$. Recall that $P(r,0)=Q(r,0)=0$ for $0\leq r\leq r_0$. By continuity of $P,\,Q$ and $b$, we have that there exists $\epsilon_1>0$ small such that
\begin{align*}
1-n-nb^{-2}-\frac{2}{n-2}(P-(n-1)Q)<0
\end{align*}
for $0<r\leq r_0$ and $0\leq t\leq \epsilon_1$. On the another hand, by $(\ref{equn_originPQ})$, $P(0,t)-Q(0,t)=0$ for $t\geq0$, and hence, by the maximum principle for $(\ref{equn_P-Qevol})$ and continuity of $P-Q$, we have that the positive maximum and negative minimum of $P-Q$ on $\overline{M}\times[0,T_1]$ for any $T_1\in(0,T)$ can only be attained on the boundary $\partial M\times[0,T_1]$. Therefore, once it is proved that $P-Q\leq0$ on $\partial M\times[0,T_1]$ for some $T_1\in(0,T)$, it follows that $P-Q\leq0$ on $\overline{M}\times[0,T_1]$; if moreover $P-Q$ is not identically zero on $\partial M\times[0,T_1]$, by the strong maximum principle, we have that $P(r,t)-Q(r,t)<0$ for $(r,t)\in(0,r_0)\times(0,T_1]$.

Denote $b_0=b(r_0,0)=\sinh(r_0)$. For $t>0$ small, by $(\ref{equn_bintt0})$ and the Taylor expansion, we have
\begin{align}
b(r,t)=b(r,0)e^{-\int_0^tQ(r,\tau)d\tau}=b(r,0)[1\,-\,(1+o(1))\,\int_0^tQ(r,\tau)d\tau]
\end{align}
where $o(1)\to0$ uniformly for $r\in[0,r_0]$ as $t\to0$. Thus, by $(\ref{equn_Pn-1Qb})$, $(\ref{equn_bintt0})$ and $(\ref{equn_comptk})$,
\begin{align}\label{equn_Pn-1Qb1}
&P(r_0,t)-(n-1)Q(r_0,t)\\
=&-(n-1)(n-2)[1+b_0^{-2}e^{2\int_0^tQ(r_0,\tau)d\tau}]+\frac{n-2}{n-1}\eta(t)^2\nonumber\\
=&-(n-1)(n-2)[1+b_0^{-2}(1+2\int_0^tQ(r_0,\tau)d\tau)(1+o(1))]+\frac{n-2}{n-1}(\eta(0)+\int_0^t\eta'(\tau)d\tau)^2\nonumber\\
=&-2(n-1)(n-2)b_0^{-2}\int_0^tQ(r_0,\tau)d\tau\,(1+o(1))\,+\,\frac{2(n-2)}{n-1}\eta(0)\int_0^t\eta'(\tau)d\tau\,(1+o(1)),\nonumber
\end{align}
where $o(1)\to0$ as $t\to0$. Therefore, by $(\ref{equn_nPn-1Qb})$ and $(\ref{equn_DtHb})$ we have
\begin{align}\label{equn_nP-Qb}
&\frac{\partial}{\partial n_g}(P-Q)\\
=&(-P+Q)\eta-\frac{\eta}{n-1}(P-(n-1)Q)-\frac{n-2}{n-1}\eta'(t)\nonumber\\
=&(Q-P)\eta+2(n-2)b_0^{-2}\eta(0)\int_0^tQ(r_0,\tau)d\tau\,(1+o(1))-\,\frac{2(n-2)}{(n-1)^2}\eta(0)^2\int_0^t\eta'(\tau)d\tau\,(1+o(1))\nonumber\\
&-\frac{n-2}{n-1}\eta'(t)\nonumber\\
\equiv&I_1+I_2+I_3+I_4,\nonumber
\end{align}
on $\partial M$ for $t>0$ small, where $o(1)\to0$ as $t\to0$.

{\bf Claim:} there exists $\epsilon_2\in(0,T)$ such that $P-Q\leq0$ on $\partial M\times[0,\epsilon_2]$.

Assume the contrary, i.e., for each $\delta>0$ small, there exists $t\in(0,\delta)$ such that
\begin{align}
P(r_0,t)-Q(r_0,t)>0.
\end{align}
Thus, for each $\delta>0$, 
there exists $t_1\in(0,\delta)$, such that
\begin{align}
&\label{inequn_P-Qbm}(P-Q)(r_0,t_1)=\sup_{0\leq t\leq t_1}(P-Q)(r_0,t)>0,\\
&\label{inequn_nP-Q0}\frac{\partial}{\partial n_g}(P-Q)(r_0,t_1)\geq0.
\end{align}
and hence, the first equality in $(\ref{equn_nP-Qb})$ gives
 \begin{align}\label{inequn_Qb01}
 Q(r_0,t_1)\geq \frac{\eta'(t_1)}{\eta(t_1)}>0.
 \end{align}
 Also, it is clear that $I_i<0$ in $(\ref{equn_nP-Qb})$ at $(r_0,t_1)$ for $i=1,3,4$. Therefore, by $(\ref{inequn_nP-Q0})$, $I_2+I_3+I_4>0$ at $(r_0,t_1)$, i.e.,
\begin{align}\label{inequn_P-QQb}
\int_0^{t_1}Q(r_0,\tau)d\tau>\frac{b_0^2\eta'(t_1)}{2(n-1)\eta(0)}\,(1+o(1))+\frac{b_0^2\eta(0)}{(n-1)^2}\int_0^t\eta'(\tau)d\tau\,(1+o(1))>0.
\end{align}
Thus, by continuity of $Q$, there exists a smallest $t_2\in(0,t_1]$, such that
\begin{align}\label{inequn_Qb02}
Q(r_0,t_2)=\sup_{0<\leq t\leq t_1}Q(r_0,t)> \frac{b_0^2\eta'(t_1)}{2(n-1)t_1\eta(0)}\,(1+o(1))>0.
\end{align}

 Combining $(\ref{inequn_P-Qbm})$,  $(\ref{equn_Pn-1Qb1})$ and $(\ref{inequn_Qb02})$, we have
\begin{align}\label{inequn_P-Qb01}
(P-Q)(r_0,t_1)\geq&(P-Q)(r_0,t_2)\\
=&(n-2)Q(r_0,t_2)-2(n-1)(n-2)b_0^{-2}\int_0^{t_2}Q(r_0,\tau)d\tau\,(1+o(1))\nonumber\\
&+\frac{2(n-2)\eta(0)}{n-1}\int_0^{t_2}\eta'(\tau)d\tau\,(1+o(1))\nonumber\\
=&(n-2)Q(r_0,t_2)\,(1+o(1))+\frac{2(n-2)\eta(0)}{n-1}\int_0^{t_2}\eta'(\tau)d\tau\,(1+o(1))\nonumber\\
\geq&(n-2)Q(r_0,t_1)\,(1+o(1))+\frac{2(n-2)\eta(0)}{n-1}\int_0^{t_2}\eta'(\tau)d\tau\,(1+o(1))\nonumber\\
\geq&\frac{(n-2)\eta'(t_1)}{\eta(0)}\,(1+o(1))+\frac{2(n-2)\eta(0)}{n-1}\int_0^{t_2}\eta'(\tau)d\tau\,(1+o(1)),\nonumber
\end{align}
with $o(1)\to0$ as $\delta\to0$, where for the last inequality we have used $(\ref{inequn_Qb01})$.

 Now at $t=t_1$, by $(\ref{inequn_P-Qb01})$, the terms $I_1$ and $I_2$ in $(\ref{equn_nP-Qb})$ satisfy
\begin{align*}
I_1&=(Q-P)(r_0,t_1)\,\eta(0)\,(1+o(1))\\
&\leq-(n-2)Q(r_0,t_2)\eta(0)\,(1+o(1))-\frac{2(n-2)\eta(0)^2}{n-1}\int_0^{t_2}\eta'(\tau)d\tau\,(1+o(1))\\
&\leq-(n-2)Q(r_0,t_2)\eta(0)\,(1+o(1)),\\
I_2&=2(n-2)b_0^{-2}\eta(0)\int_0^{t_1}Q(r_0,\tau)d\tau\,(1+o(1))\\
&\leq 2(n-2)t_1b_0^{-2}\eta(0)Q(r_0,t_2)(1+o(1)),
\end{align*}
with $o(1)\to0$ and $t_1\to0$ as $\delta\to0$, and hence, by $(\ref{inequn_Qb02})$, $(\ref{equn_nP-Qb})$ and the fact $I_i<0$ for $i=3,4$, we have
\begin{align}
\frac{\partial}{\partial n_g}(P-Q)(r_0,t_1)<0,
\end{align}
for $\delta>0$ sufficiently small, contradicting with the assumption that $(P-Q)$ achieves its maximum on $\overline{M}\times[0,t_1]$ at the point $(r_0,t_1)$. This proves the {\bf Claim}. Therefore, we obtain
\begin{align*}
P(r,t)-Q(r,t)\leq 0
\end{align*}
for $\overline{M}\times[0,\epsilon_2]$, and moreover,
\begin{align}\label{inequn_P-Q-0}
P(r,t)-Q(r,t)< 0
\end{align}
for $(r,t)\in(0,r_0)\times(0,\epsilon_2]$.

We now show that $Q(r,t)\leq 0$ on $\overline{M}\times [0,\epsilon_3]$ for some $\epsilon_3\in(0,\epsilon_2]$. First, we verify it holds on the boundary. By $(\ref{equn_Pn-1Qb1})$, we have
\begin{align}\label{equn_QbODE0}
&Q(r_0,t)-2(n-1)(1+o(1))\,b_0^{-2}\int_0^tQ(r_0,\tau)d\tau\,\\
=&\frac{1}{(n-2)}(P-Q)(r_0,t)-\,\frac{2}{n-1}(1+o(1))\eta(0)\int_0^t\eta'(\tau)d\tau<0,\nonumber
\end{align}
for $t\in(0,\epsilon_2]$, where $o(1)\to0$ as $t\to0$, and the right-hand side of the first equality is now denoted by $f(t)$. We denote $\xi(t)=\int_0^tQ(r_0,\tau)d\tau$ and get the following first order differential equation for $\xi(t)$ for $t\in[0,\epsilon_3]$:
\begin{align}\label{equn_xibODE}
\xi'(t)-2(n-1)(1+o(1))\,b_0^{-2} \xi(t)=f(t)<0,
\end{align}
and hence by solving $(\ref{equn_xibODE})$, we obtain
\begin{align}
\xi(t)=e^{2(n-1)b_0^{-2}\int_0^t(1+o(1))ds}[\int_0^te^{-2(n-1)b_0^{-2}\int_0^{\tau}(1+o(1))ds}f(\tau)d\tau]<0
\end{align}
for $t\in(0,\epsilon_3]$, and hence by $(\ref{equn_xibODE})$,
\begin{align}\label{inequn_Qb03}
Q(r_0,t)=\xi'(t)=2(n-1)(1+o(1))\,b_0^{-2} \xi(t)+f(t)<0,
\end{align}
for $t\in(0,\epsilon_3]$.

On the other hand, recall that $P(r,0)=Q(r,0)=0$ for $0\leq r\leq r_0$. Since $(P-Q)\leq0$ on $\overline{M}\times[0,\epsilon_2]$, by continuity of $P$ and $Q$ and the equation $(\ref{equn_envQ})$, we have that there exists $\epsilon_4\in(0,\epsilon_3]$ such that
\begin{align}\label{inequn_Qt}
\partial_tQ\leq \Delta_gQ+2(Q+1-n)Q
\end{align}
on $\overline{M}\times[0,\epsilon_4]$. By the maximum principle for $(\ref{inequn_Qt})$ and $(\ref{inequn_Qb03})$, we have
\begin{align}\label{inequn_Q-0}
Q<0,\,\,\,\text{on}\,\,\overline{M}\times(0,\epsilon_4].
\end{align}
Thus, by $(\ref{equn_Pn-1Qb1})$, we have
\begin{align*}
(P-(n-1)Q)(r_0,t)<0
\end{align*}
for $t\in(0,\epsilon_5)$, with some $\epsilon_5\in(0,\epsilon_4)$. By $(\ref{inequn_Q-0})$ and $(\ref{equn_originPQ})$, we have
\begin{align}
(P-(n-1)Q)(0,t)=-(n-2)Q(0,t)>0
\end{align}
for $t\in(0,\epsilon_4]$; on the other hand, by $(\ref{inequn_P-Q-0})$, $(\ref{equn_Pn-1Qs})$ and Lemma \ref{lem_Hp}, we get
\begin{align}
\frac{\partial}{\partial s}(P-(n-1)Q)=-2H_g(P-Q)>0
\end{align}
for $(r,t)\in(0,r_0)\times(0,\epsilon_2)$. Therefore, we have
\begin{align}
P>(n-1)Q
\end{align}
for $(r,t)\in[0,r_0]\times(0,\epsilon_4]$. Now take $\epsilon=\epsilon_4>0$.

In summary, we finally obtain that
\begin{align}
(n-1)Q<P\leq Q<0\,\,\,\text{on}\,\,\overline{M}\times(0,\epsilon],
\end{align}
and moreover,
\begin{align*}
P< Q
\end{align*}
for $(r,t)\in (0,r_0)\times (0,\epsilon]$, and hence by the first equality in $(\ref{equn_nP-Qb})$, we have
\begin{align*}
P< Q
\end{align*}
for $(r,t)\in (0,r_0]\times (0,\epsilon]$. This completes the proof of the lemma.

\end{proof}

Now we are ready to show that $(\ref{inequn_pin1})$ holds for all time.

\begin{thm}\label{thm_PQn-1Q2}
Let $n\geq3$. Let $g(t)$ be a solution to $(\ref{equn_NRF1})-(\ref{equn_Ninc2})$ on $\overline{M}\times[0,T)$ for some $T>0$ under the compatibility conditions $(\ref{equn_comptk})$ for some $k\geq2$. Assume that $\eta=\eta(t)\in C^k([0,\infty))$ is non-decreasing for $t\geq0$ and $\eta'(t)>0$ on $(0,t_0)$ for some $t_0>0$.  Then $(\ref{inequn_pin1})$ holds on $M\times[0,T)$. Indeed, we have
\begin{align}\label{inequn_Q_P_Q1}
(n-1)Q<P\leq Q<0
\end{align}
on $\overline{M}\times(0,T)$, and moreover,
\begin{align}\label{inequn_P-Q-2}
P(r,t)<Q(r,t)
\end{align}
for $(r,t)\in(0,r_0]\times(0,T)$. 

\end{thm}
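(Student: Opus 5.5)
Lemma \ref{lem_P_Q-0} gives the strict pinching $(\ref{inequn_Q_P_Q1})$ on $\overline{M}\times(0,\epsilon]$, so I will use a continuity (first-failure-time) argument. Let $t^*=\sup\{\tau\in(\epsilon,T):(\ref{inequn_Q_P_Q1})\text{ holds on }\overline{M}\times(0,\tau)\}$. Suppose $t^*<T$. By continuity the non-strict inequalities $(n-1)Q\leq P\leq Q\leq 0$ hold on $\overline{M}\times[0,t^*]$. I will show that all of them are in fact strict at $t=t^*$ (with $P<Q$ for $r>0$), so by continuity they persist slightly beyond $t^*$, a contradiction. Throughout I use $\eta\ge\eta(0)>0$, so Lemma \ref{lem_Hp} gives $b_s'>0$ and $H_g>0$ for $r>0$.

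\emph{Step 1: $P-Q$.} I use $(\ref{equn_P-Qevol})$. On $[0,t^*]$ we have $P-(n-1)Q\geq0$, so the coefficient $1-n-nb^{-2}-\frac{2}{n-2}(P-(n-1)Q)$ is negative. Since $P-Q=0$ at $r=0$, the maximum principle forces any nonnegative maximum of $P-Q$ to lie on $\partial M$. Suppose $(P-Q)(r_0,t_1)=0$ is a boundary maximum at some $t_1\le t^*$. Then Hopf's lemma gives $\partial_{n_g}(P-Q)>0$ there (or $P\equiv Q$, which is excluded by the strict interior inequality on earlier times). Now evaluate the first equality in $(\ref{equn_nP-Qb})$:
\[
\partial_{n_g}(P-Q)=(Q-P)\eta-\frac{\eta}{n-1}(P-(n-1)Q)-\frac{n-2}{n-1}\eta'.
\]
Each term is $\le0$, because $P=Q$, $P-(n-1)Q\ge0$ and $\eta'\ge0$. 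This contradicts Hopf. Hence $P<Q$ on $(0,r_0]\times(0,t^*]$, and the strong maximum principle makes this strict in the interior.

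\emph{Step 2: $Q<0$.} I use $(\ref{equn_envQ})$. Since $P-Q\le0$, the reaction terms $2b^{-2}(P-Q)+\frac{2}{n-2}(P-Q)^2$ need control. I would rewrite them as $\frac{2}{n-2}(P-Q)\bigl[(n-2)b^{-2}+(P-Q)\bigr]$ and use $P-Q\geq(n-2)Q$ (from $P\ge(n-1)Q$) to bound this by a term linear in $Q$ near the first zero. This gives $\partial_tQ\le\Delta Q+cQ$ wherever $Q$ is close to $0$. The strong maximum principle then excludes an interior zero of $Q$ at $t^*$. For a boundary zero $Q(r_0,t^*)=0$, I plan to use $(\ref{equn_DtHb})$, $\eta'=\eta P-(n-1)\partial_{n_g}Q$: Hopf gives $\partial_{n_g}Q>0$, and $P<Q=0$, so the right-hand side is negative while $\eta'\ge0$, a contradiction.

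\emph{Step 3: $P>(n-1)Q$.} I use the ODE in $s$ from $(\ref{equn_Pn-1Qs})$: $\partial_s(P-(n-1)Q)=-2H_g(P-Q)>0$ for $r>0$ by Step 1. At $r=0$, $P-(n-1)Q=-(n-2)Q>0$ by Step 2. Integrating outward in $s$ gives $P-(n-1)Q>0$ on all of $\overline{M}$ at $t^*$, including the boundary. This is exactly the monotonicity argument from the end of the proof of Lemma \ref{lem_P_Q-0}.

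The main obstacle is Step 2. The reaction term $2b^{-2}(P-Q)$ is nonpositive, so it actually helps keep $Q$ negative, but the positive term $\frac{2}{n-2}(P-Q)^2$ must be absorbed at a point where $Q$ first touches $0$. There $P-Q=P\in[(n-1)Q,Q]=\{0\}$, so $P-Q=0$, which is incompatible with Step 1's strictness for $r>0$. At the origin $P=Q$ automatically, and that point is handled by the maximum principle directly. I would therefore order the argument as Steps 1, 2, 3, checking carefully that the Hopf argument at the boundary in Step 2 is valid.
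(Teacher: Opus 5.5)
Your argument is correct in substance but ordered differently from the paper's, which avoids the Hopf lemma altogether. At the critical time the paper proceeds in this order:
\begin{enumerate}
\item It gets $P<Q$ on the punctured interior by the strong maximum principle.
\item It gets $Q<0$ on $[0,r_0)$. For $r>0$ this holds because $Q=0$ would force $P=Q$ via the pinching; at the origin it uses the strong maximum principle for $\partial_tQ\le\Delta_gQ+2Q(Q+1-n)$.
\item It transports strictness outward with $(\ref{equn_Pn-1Qs})$, obtaining $P-(n-1)Q\ge-(n-2)Q(0,t)>0$ on all of $\overline{M}$.
\item Only then does it treat the boundary. $Q(r_0,t)=0$ would force $P-(n-1)Q=0$ there. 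At a boundary zero of $P-Q$, the right-hand side of the first identity in $(\ref{equn_nP-Qb})$ equals $-\frac{\eta}{n-1}(P-(n-1)Q)-\frac{n-2}{n-1}\eta'<0$, so the trivial inequality $\partial_{n_g}(P-Q)\ge0$ at a spatial maximum already gives a contradiction.
\end{enumerate}
You instead handle the boundary for $P-Q$ first, and compensate for the merely nonpositive right-hand side with the parabolic Hopf lemma. That is legitimate: interior strictness near $(r_0,t_1)$ comes from the strong maximum principle, and the paper itself uses Hopf in the two-dimensional case. It makes your Step~1 self-contained, at the price of a heavier tool. Your boundary case in Step~2 is vacuous. $Q(r_0,t^*)=0$ together with $(n-1)Q\le P\le Q$ forces $P=Q=0$ there, which Step~1 already excludes; indeed your $P<Q=0$ is incompatible with $P\ge(n-1)Q$. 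So the second Hopf argument is unnecessary.

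Two places need repair.

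First, the strong maximum principle for $(\ref{equn_P-Qevol})$ must be applied on the punctured ball $B_{r_0}(0)\setminus\{0\}$, where $b^{-2}$ is locally bounded. The origin is always an interior zero of $P-Q\le0$, so applying it across the origin would wrongly force $P\equiv Q$. For the same reason, the maximum you push to $\partial M$ should be a positive one, not a nonnegative one.

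Second, your final sentence ``by continuity they persist slightly beyond $t^*$'' fails for $P\le Q$ near $r=0$. Since $P-Q\equiv0$ on $\{r=0\}$, continuity only yields $P<Q$ on $[\sigma,r_0]\times[t^*,t^*+\delta]$, together with $P-(n-1)Q>0$ and $Q<0$ on $\overline{M}\times[t^*,t^*+\delta]$. Near the origin you need the extra step the paper performs at the end of its proof. On $\overline{B_\sigma}(0)\times[t^*,t^*+\delta]$ the zeroth-order coefficient $c=2[1-n-nb^{-2}-\frac{2}{n-2}(P-(n-1)Q)]$ in $(\ref{equn_P-Qevol})$ is negative. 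Moreover $P-Q$ vanishes at the origin, is negative on $\partial B_\sigma(0)$, and is nonpositive at $t=t^*$. A positive maximum would then sit at some $0<r<\sigma$, $t>t^*$, where $0\le\partial_t(P-Q)=\Delta_g(P-Q)+c\,(P-Q)<0$. With this added, your continuity argument closes.
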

\begin{proof}
By Lemma \ref{lem_P_Q-0}, $(\ref{inequn_pin1})$ holds on $\overline{M}\times[0,\epsilon]$ for some $\epsilon\in(0,T)$. Define the function
\begin{align*}
\xi(t)\triangleq \sup_{\overline{M}}\max\{Q(\cdot,t),\,(P-Q)(\cdot,t),\,((n-1)Q-P)(\cdot,t)\}
\end{align*}
for $t\geq0$. Then $\xi(t)$ is continuous on $[0,T)$. 

Assume, to the contrary, that there exists $t_0\geq \epsilon$, such that $(\ref{inequn_pin1})$ holds on $\overline{M}\times[0,t_0]$, while for each $\delta>0$, there exists $\tilde{t}\in(t_0,t_0+\delta)$ such that
\begin{align}\label{inequn_xi1}
\xi(\tilde{t})>0.
\end{align}
Since $P(0,t)=Q(0,t)$ for $t\geq0$, we have
\begin{align}
&\xi(t)=0,
\end{align}
for $t\in[0,t_0]$. Since $P-Q\leq 0$ on $\overline{M}\times[0,t_0]$, applying the strong maximum principle for $(\ref{equn_P-Qevol})$(see, for instance, Theorem 3 in Page 38 and Theorem 5 in Page 39 in \cite{Friedman}), we have that
\begin{align}\label{inequn_P-Q-large}
(P-Q)(r,t)<0
\end{align}
for $(r,t)\in(0,r_0)\times(0,t_0]$. If $Q(r_1,t_0)=0$ for some $r_1\in[0,r_0]$, then by $(\ref{inequn_pin1})$ on $\overline{M}\times[0,t_0]$, we have \begin{align*}
P(r_1,t_0)=Q(r_1,t_0)=(P-Q)(r_1,t_0)=0,
\end{align*}
and hence by $(\ref{inequn_P-Q-large})$, $r_1=r_0$ or $r_1=0$.  Therefore,
\begin{align}\label{inequn_Q-large}
Q(r,t_0)<0
\end{align}
for $r\in(0,r_0)$. Now if $Q(0,t_0)=0$, since
\begin{align*}
(P-Q)(0,t_0)=0,
\end{align*}
then by continuity of $(P-Q)$, we have that $Q$ satisfies the inequality $(\ref{inequn_Qt})$ in a neighborhood $U$ of the point $(r_0,t_0)$ in $\overline{M}\times[0,t_0]$. Since $Q\leq 0$ on $U$ with $Q(0,t_0)=0$, by the strong maximum principle for $(\ref{inequn_Qt})$, we have $Q=0$ in $U$, contradicting with $(\ref{inequn_Q-large})$. Therefore,
\begin{align}\label{inequn_Q-large1}
Q(r,t_0)<0
\end{align}
for $r\in[0,r_0)$, and hence, by $(\ref{equn_Pn-1Qs})$, $(\ref{inequn_P-Q-large})$ and Lemma \ref{lem_Hp}, we have
\begin{align*}
(P-(n-1)Q)(r,t_0)\geq-(n-2)Q(0,t_0)>0,
\end{align*}
for $r\in[0,r_0]$. The same discussion yields
\begin{align}\label{inequn_P_P-Q_Q-1}
(n-1)Q<P\leq Q<0
\end{align}
for $(r,t)\,\in\,[0,r_0)\times(0,t_0]$. Therefore, again, by $(\ref{inequn_P-Q-large})$, $(\ref{equn_Pn-1Qs})$ and Lemma \ref{lem_Hp}, we get
\begin{align}\label{inequn_Pn-1Q2}
P-(n-1)Q>0
\end{align}
for $(r,t)\,\in\,[0,r_0]\times(0,t_0]$.

If $Q(r_0,t)=0$ for some $t\in(0,t_0]$, then by $(\ref{inequn_pin1})$ on $\overline{M}\times[0,t_0]$, we have
\begin{align*}
P(r_0,t)=(P-Q)(r_0,t)=(P-(n-1)Q)(r_0,t)=0,
\end{align*}
contradicting with $(\ref{inequn_Pn-1Q2})$. Therefore, we have
\begin{align}\label{inequn_Q-2}
Q<0
\end{align}
on $\overline{M}\times(0,t_0]$.

Now assume
\begin{align*}
(P-Q)(r_0,t)=0
\end{align*}
for some $t\in(0,t_0]$. Hence, by $(\ref{inequn_P_P-Q_Q-1})$, we obtain
\begin{align*}
\frac{\partial}{\partial n_g}(P-Q)(r_0,t)\geq0,
\end{align*}
contradicting with the first equality of $(\ref{equn_nP-Qb})$, by $(\ref{inequn_Pn-1Q2})$ and the fact $\eta'(t)\geq0$ for $t\geq0$.

In summary, we have that
\begin{align}
(n-1)Q<P\leq Q<0
\end{align}
on $\overline{M}\times(0,t_0]$, and moreover,
\begin{align}
P(r,t)<Q(r,t)
\end{align}
for $(r,t)\in(0,r_0]\times(0,t_0]$. Therefore, by continuity, for $\sigma\in(0,r_0)$ small, there exists $\delta>0$, such that
\begin{align}
(n-1)Q(r,t)<P(r,t)< Q(r,t)<0
\end{align}
for $(r,t)\in [\sigma,r_0]\times[t_0,t_0+\delta]$, and also,
\begin{align}\label{inequn_Pn-1QQ1}
(n-1)Q(r,t)-P(r,t)<0,\,\,\, Q(r,t)<0,
\end{align}
for $(r,t)\in [0,r_0]\times[t_0,t_0+\delta]$. Thus,
\begin{align*}
1-n-nb^{-2}-\frac{2}{n-2}(P-(n-1)Q)<0
\end{align*}
for $(r,t)\in(0,r_0]\times[0,t_0+\delta]$. Recall that $(P-Q)(0,t)=0$ for $t\geq0$. By the strong maximum principle for $(\ref{equn_P-Qevol})$ on $(0,\sigma)\times[t_0,t_0+\delta]$, we have
\begin{align}
(P-Q)(r,t)<0
\end{align}
for $(r,t)\in(0,\sigma)\times[t_0,t_0+\delta]$. 
 Therefore,
\begin{align}\label{equn_P-Q2}
(P-Q)(r,t)<0
\end{align}
for $(r,t)\in(0,r_0]\times[0,t_0+\delta]$.

By $(\ref{inequn_Pn-1QQ1})$, $(\ref{equn_P-Q2})$ and the fact $(P-Q)(0,t)=0$ for $t\geq0$, we have
\begin{align*}
\xi(t)=0
\end{align*}
for $t\in[t_0,t_0+\delta]$, contradicting with $(\ref{inequn_xi1})$. Therefore, we obtain $(\ref{inequn_Q_P_Q1})$ on $\overline{M}\times(0,T)$, and $(\ref{inequn_P-Q-2})$ for $(r,t)\in(0,r_0]\times(0,T)$. This completes the proof of the theorem.

\end{proof}

We now turn to the proof of the long-time existence of the solution to the initial-boundary value problem $(\ref{equn_NRF1})-(\ref{equn_Ninc2})$.
\begin{thm}\label{thm_existlongtime}
Let $n\geq3$. Assume that $\eta=\eta(t)\in C^k([0,\infty))$ is non-decreasing for $t\geq0$ and $\eta'(t)>0$ on $(0,t_0)$ for some $t_0>0$. Then there exists a unique solution to $(\ref{equn_NRF1})-(\ref{equn_Ninc2})$ on $\overline{M}\times[0,\infty)$, under the compatibility conditions $(\ref{equn_comptk})$ for some $k\geq2$.
\end{thm}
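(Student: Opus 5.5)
We argue by contradiction: let $T<\infty$ be the maximal existence time of the solution given by Gianniotis' short-time theory (through the transformation $(\ref{equn_vartransf})$). We will show that $g(t)$, together with its derivatives, stays uniformly controlled on $\overline{M}\times[0,T)$. The compatibility conditions are only needed at $t=0$, so the flow can then be restarted at some $t_1$ close to $T$ from $g(t_1)$, extending past $T$. Uniqueness follows from the uniqueness part of \cite{Gi}.

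\textbf{Step 1: two-sided bounds on $a$ and $b$.} By Theorem \ref{thm_PQn-1Q2} we have $(n-1)Q<P\le Q<0$ on $\overline{M}\times(0,T)$. Then $(\ref{equn_aintt0})$ and $(\ref{equn_bintt0})$ show that $a$ and $b$ are non-decreasing in $t$, so
\[
a\ge 1,\qquad b\ge \sinh r .
\]
For upper bounds we need a bound on $|Q|$, and hence on $|P|\le (n-1)|Q|$, on $[0,T)$. Take $m(t)=\min_{\overline M}Q(\cdot,t)$. Using $P-Q\le 0$ and $-(n-2)Q\le P-Q$, the reaction terms in $(\ref{equn_envQ})$ satisfy
\[
2Q(Q+1-n)+2b^{-2}(P-Q)+\tfrac{2}{n-2}(P-Q)^2 \ge -C\,Q^2-Cb^{-2}|Q| .
\]
This alone gives blow-up-type control only, so a better mechanism is needed. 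On $\partial M$, identity $(\ref{equn_Pn-1Qb})$ combined with $P\ge(n-1)Q$ gives
\[
b(r_0,t)^{-2}\le \frac{\eta(t)^2}{(n-1)^2}-1 .
\]
Since $\eta$ is bounded on $[0,T]$, this yields $b(r_0,t)\ge c>0$, but no upper bound. For the upper bound, integrate $(\ref{equn_Pn-1Qs})$ in $s$ and use $P-Q\le0$, $H_g>0$ to see that $P-(n-1)Q$ is increasing in $s$. In particular
\[
0<P-(n-1)Q\le (P-(n-1)Q)(r_0,t)\le \tfrac{n-2}{n-1}\eta(t)^2 .
\]
This bounds $P-(n-1)Q$, and hence $(n-2)|Q|$ up to the term $P-Q$, which is already $\le 0$. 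Indeed $P-(n-1)Q=(P-Q)-(n-2)Q$ with both summands non-negative, so $|Q|\le \eta^2/(n-1)$ and $|P|\le (n-1)|Q|\le C(T)$. Integrating in time, $a$ and $b$ are bounded above by $e^{C(T)T}$ times their initial values.

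\textbf{Step 2: curvature and higher regularity.} Step 1 gives a uniform bound on the full curvature tensor, since $K=P/(n-1)-1$ and $L$ is expressed through $Q,P$; it also gives uniform equivalence $g(0)\le g(t)\le C g(0)$ on $[0,T)$. In the interior, Shi-type local derivative estimates give $C^k$ bounds. Near $\partial M$ we use the boundary estimates for the mixed problem (mean curvature plus conformal class) from \cite{Gi}, or, more concretely, the 1D reduction. In the coordinate $s$, $b$ and $b_s$ satisfy the parabolic equation $(\ref{equn_bst})$ with the Robin-type boundary condition $b_s=\frac{\eta}{n-1}b$ at $s=s(r_0,t)$. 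Since $a$ is bounded above and below, $s$ is a bi-Lipschitz reparametrization, and standard parabolic Schauder theory for oblique boundary problems bootstraps $C^{2k+\alpha,k+\alpha/2}$ bounds up to time $T$.

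\textbf{Main obstacle.} The delicate point is Step 2 at the boundary. The Ricci flow with these boundary conditions is not strictly parabolic without a gauge, and one must check that the uniform $C^0$ and curvature bounds are enough to invoke the estimates of \cite{Gi} uniformly, so that the existence time from each restart is bounded below. I expect to handle this through the rotational symmetry: writing the system for $(a,b)$ via $(\ref{equn_NRFa})$ and $(\ref{equn_NRFb})$ as a scalar parabolic equation for $b(s,t)$ with nonlinear Neumann data, together with the ODE for $a$. This makes the bootstrap one-dimensional and standard, giving limits $g(T)$ as $t\to T$ smooth up to $\partial M$ that satisfy the compatibility conditions needed to restart, which contradicts the maximality of $T$.
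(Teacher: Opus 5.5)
\textbf{There is a genuine gap in Step 1, at exactly the point where you get the curvature bound.} You write $P-(n-1)Q=(P-Q)-(n-2)Q$ ``with both summands non-negative''. But Theorem \ref{thm_PQn-1Q2} gives $P-Q\le 0$, so the two summands have opposite signs: $-(n-2)Q>0$ while $P-Q\le 0$. An upper bound on $P-(n-1)Q$ therefore only yields
\[
-(n-2)Q\le \tfrac{n-2}{n-1}\eta^2+(Q-P),
\]
which controls nothing. The cone $(n-1)Q<P\le Q<0$ contains points where $P-(n-1)Q$ is arbitrarily small while $Q\to-\infty$: take $P$ close to $(n-1)Q$. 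So the bound $|Q|\le \eta^2/(n-1)$ is unsupported. Everything built on it is unsupported too: the upper bounds on $a$ and $b$, the bound on $|Rm|$, and hence all of Step 2.

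\textbf{The missing ingredient is a lower bound on $P-Q$,} i.e.\ keeping $P$ away from the edge $P=(n-1)Q$ of the cone. The paper gets it from the maximum principle for $(\ref{equn_P-Qevol})$. Fix $t_0<T$. The function $P-Q$ vanishes at $t=0$ and at $r=0$. Its zeroth-order coefficient $2[1-n-nb^{-2}-\frac{2}{n-2}(P-(n-1)Q)]$ is negative, since $P-(n-1)Q>0$. Hence the negative minimum of $P-Q$ on $\overline{M}\times[0,t_0]$ is attained at some boundary point $(r_0,t_1)$, where $\partial_{n_g}(P-Q)\le 0$. Insert this into the first equality of $(\ref{equn_nP-Qb})$ and use your (correct) boundary bound $(P-(n-1)Q)(r_0,t)\le -(n-1)(n-2)+\frac{n-2}{n-1}\eta^2$. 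This gives
\[
P-Q\ge n-2-\frac{n-2}{(n-1)^2}\eta(t_1)^2-\frac{n-2}{n-1}\frac{\eta'(t_1)}{\eta(t_1)}\quad\text{on }\overline{M}\times[0,t_0].
\]
The right side is bounded below uniformly in $t_0<T$, because $\eta\ge\eta(0)>0$ and $\eta'$ is continuous on $[0,T]$. Only by combining this with the upper bound on $P-(n-1)Q$ do you get $-(n-2)Q\le C(T)$. From that follow bounds on $|P|$ and $|Q|$, the upper bounds on $a$ and $b$, and bounds on $|Rm|$ and on $\text{II}_g=\frac{\eta}{n-1}g^T$.

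Once this is in place, Step 2 is in the right spirit. The paper does not do a one-dimensional Schauder bootstrap, though. It observes that the boundary is $\Lambda$-controlled and invokes Gianniotis' boundary derivative estimates (\cite{Gi1}, Theorem 1.2) to bound $\nabla^jRm$ and $\nabla^{j+1}\text{II}$ up to $\partial M$. It then extends $g$ to $t=T$ and restarts via \cite{Gi}.
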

\begin{proof}
Applying the variable transformation $(\ref{equn_vartransf})$, Theorem 1.2 and Theorem 1.3 in \cite{Gi} yield: there exists a unique solution $g(t)\in C^{2k+1,k+\frac{1}{2}}(\overline{M}\times[0,\epsilon_0])$ to $(\ref{equn_NRF1})-(\ref{equn_Ninc2})$ for some $\epsilon_0>0$. Let $T$ be the maximal existence time of the solution $g(t)$.

Now assume $T<\infty$. For any $t_0\in(0,T)$, since $(\ref{inequn_pin1})$ holds on $\overline{M}\times[0,T)$, by the maximum principle for $(\ref{equn_P-Qevol})$, we have that there exists $t_1\in(0,t_0]$ such that
\begin{align}\label{equn_P-Qbi}
(P-Q)(r_0,t_1)=\inf_{\overline{M}\times[0,t_0]}(P-Q)<0,
\end{align}
and hence
\begin{align}
\frac{\partial}{\partial n_g}(P-Q)(r_0,t_1)\leq 0
\end{align}
where $n_g$ is the outer normal vector field of $\partial M$. Thus, by the first equality in $(\ref{equn_nP-Qb})$, we have
\begin{align}\label{inequn_P-Qb2}
(-P+Q)(r_0,t_1)\eta(t_1)-\frac{\eta(t_1)}{n-1}(P-(n-1)Q)(r_0,t_1)\leq\frac{n-2}{n-1}\eta'(t_1).
\end{align}
On the other hand, by $(\ref{equn_Pn-1Qs})$, $(\ref{inequn_P-Q-2})$ and Lemma \ref{lem_Hp}, we obtain that $(P-(n-1)Q)(r,t)>0$ is strictly increasing in $r$ for each $t>0$. The first equality in $(\ref{equn_Pn-1Qb1})$ yields
\begin{align}
P(r_0,t)-(n-1)Q(r_0,t)&=-(n-1)(n-2)[1+b_0^{-2}e^{2\int_0^tQ(r_0,\tau)d\tau}]+\frac{n-2}{n-1}\eta(t)^2\nonumber\\
&\label{inequn_Pn-1Qb2}\leq -(n-1)(n-2)+\frac{n-2}{n-1}\eta(t)^2
\end{align}
for $t\in(0,T)$, and hence,
\begin{align}\label{inequn_Pn-1Q3}
P(r,t)-(n-1)Q(r,t)\leq -(n-1)(n-2)+\frac{n-2}{n-1}\eta(t)^2
\end{align}
for $(r,t)\in[0,r_0]\times(0,T)$. By $(\ref{inequn_P-Qb2})$ and $(\ref{inequn_Pn-1Qb2})$, we get
\begin{align}
P(r_0,t_1)-Q(r_0,t_1)\geq n-2-\frac{n-2}{(n-1)^2}\eta(t_1)^2-\frac{n-2}{n-1}\frac{\eta'(t_1)}{\eta(t_1)},
\end{align}
and hence by $(\ref{equn_P-Qbi})$,
\begin{align}\label{inequn_P-Q3}
P(r,t)-Q(r,t)\geq n-2-\frac{n-2}{(n-1)^2}\eta(t_1)^2-\frac{n-2}{n-1}\frac{\eta'(t_1)}{\eta(t_1)}
\end{align}
for $[0,r_0]\times[0,t_0]$. Combining $(\ref{inequn_Pn-1Q3})$ with $(\ref{inequn_P-Q3})$, we have
that
\begin{align}
 n(n-1)-\eta(t_0)^2-\frac{1}{n-1}\eta(t_1)^2-\frac{\eta'(t_1)}{\eta(t_1)}\leq (n-1)Q<P\leq Q<0
\end{align}
for $(r,t)\in[0,r_0]\times(0,t_0]$. Since $g(t)$ is rotationally symmetric for $t>0$, the second fundamental form $II_g$ of $\partial M$ satisfies
\begin{align}
II_g=\frac{\eta(t)}{n-1}g_{\mathbb{S}^{n-1}}.
\end{align}
In summary, we have that there exists $C=C(T)>0$ such that
\begin{align}
&\sup_{M\times[0,T)}|Rm_{g(t)}|_{g(t)}+\sup_{\partial M\times[0,T)}|II_{g(t)}|_{g(t)}\leq C.
\end{align}
By $(\ref{equn_metricpolar}),\,(\ref{equn_aintt0})$ and $(\ref{equn_bintt0})$, we obtain that $a(r,t)$ and $b(r,t)$ are increasing in $t$, and there exists a constant $C_1=C_1(T)>0$ such that
\begin{align}
b(r_0,0)\leq b(r_0,t)\leq C_1
\end{align}
for $t\in[0,T)$. Notice that $g^T(t)=b(r_0,t)^2g_{\mathbb{S}^{n-1}}$. By taking $\gamma(t)=g_{\mathbb{S}^{n-1}}$ for $t\in[0,T)$, one has that the boundary $\partial M$ is $\Lambda$-controlled in $(0,t_0]$ for any $t_0\in(0,T)$ (see Definition 3.1 in \cite{Gi1}). Hence, by Theorem 1.2 in \cite{Gi1}, for any $j=0,1,..,2k-2$, there exists a constant $C_2=C_2(T,k)>0$ such that
\begin{align}
&|\nabla^jRm_{g(t)}|_{g(t)}\leq C_2,\,\,\,\text{in}\,\,\overline{M}\times [0,T),\\
&|\nabla^{j+1}II_{g(t)}|_{g(t)}\leq C_2,\,\,\,\text{on}\,\,\partial M\times [0,T).
\end{align}
 By $(\ref{equn_aintt0})$ and $(\ref{equn_bintt0})$, the solution $g(t)$ can be extended to time $T$, and hence, by Theorem 1.2 and Theorem 1.3 in \cite{Gi}, the solution $g(t)$ exists on $[0,T+\delta]$ for some $\delta>0$, contradicting with the choice of $T$. Therefore, the solution $g(t)$ exists for all time $t>0$. This completes the proof of the theorem.

\end{proof}

\vskip0.2cm

\section{Locally uniform convergence of the solution to $(\ref{equn_NRF1})-(\ref{equn_Ninc2})$ in the interior of $\overline{M}$}\label{section_4}

Let $\overline{M}$ be of dimension $n\geq3$. In this section, we will show that both the coefficients $a(r,t)$ and $b(r,t)$ in the decomposition $(\ref{equn_metricpolar1})$ of the metric $g(t)$ and the curvatures $P(r,t)$ and $Q(r,t)$ are uniformly bounded on $[0,r_1]\times[0,\infty)$ for any $r_1\in(0,r_0)$; moreover, as $t\to\infty$, these functions converge locally uniformly in $r$ on $[0,r_0)$.

First, we show that $a(r,t)$, $b(r,t)$ and the distance function $s=\int_0^ra(x,t)dx$ are uniformly bounded on $[0,r_1]\times[0,\infty)$ for any $r_1\in(0,r_0)$.
\begin{lem}\label{lem_absBD}
Let $n\geq3$. Under the assumption in Theorem \ref{thm_existlongtime}, for each $r_1\in(0,r_0)$, there exists a constant $C=C(r_1)>0$ such that
\begin{align}
&\label{inequn_LBa}a(r,0)\leq a(r,t)\leq C,\\
&\label{inequn_LBb}b(r,0)\leq b(r,t)\leq Cr,\\
&\label{inequn_LBs}s(r,0)\leq s(r,t)\leq Cr,
\end{align}
for $(r,t)\in[0,r_1]\times[0,\infty)$.
\end{lem}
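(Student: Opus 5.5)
The lower bounds are immediate. By Theorem \ref{thm_PQn-1Q2} we have $P<0$ and $Q<0$ on $\overline{M}\times(0,\infty)$, so $(\ref{equn_aintt0})$ and $(\ref{equn_bintt0})$ show that $a(r,t)$ and $b(r,t)$ are non-decreasing in $t$. This gives $a(r,t)\geq a(r,0)=1$ and $b(r,t)\geq b(r,0)=\sinh r$. Integrating $a$ in $r$ then gives $s(r,t)\geq s(r,0)=r$.

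For the upper bounds, I will extract two pointwise differential inequalities from the curvature pinching $(\ref{inequn_Q_P_Q1})$.

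First, $P\leq Q$ together with $(\ref{equn_aintt0})$ and $(\ref{equn_bintt0})$ gives
\begin{align*}
\frac{a(x,t)}{a(x,0)}\geq\frac{b(x,t)}{b(x,0)}, \quad\text{i.e.}\quad a(x,t)\geq \frac{b(x,t)}{\sinh x}.
\end{align*}
Second, $P>(n-1)Q$ combined with $(\ref{equn_Pn-1Qb0})$ gives $b^{-2}(1-(b_s')^2)+1<0$. Since $b_s'>0$ by Lemma \ref{lem_Hp}, this is $b_s'>\sqrt{1+b^2}$. Integrating from the origin, where $b=s=0$, yields $b\geq\sinh s$.

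Combining the two inequalities, I get
\begin{align*}
\partial_x\ln b=\frac{a\,b_s'}{b}\geq \frac{b_s'}{\sinh x}\geq \frac{\sqrt{1+b^2}}{\sinh x}.
\end{align*}
This single inequality will be used in two ways.

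(i) Using $\sqrt{1+b^2}\geq b$, I obtain $\partial_x(1/b)\leq -1/\sinh x$. Integrating from $r$ to $r_0$ and discarding the positive term $1/b(r_0,t)$ gives
\begin{align*}
b(r,t)\leq \Big(\int_r^{r_0}\frac{dx}{\sinh x}\Big)^{-1}.
\end{align*}
This is uniform in $t$ and finite for $r<r_0$. In particular, $b(r,t)\leq C(r_1)$ on $[0,r_1]\times[0,\infty)$.

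(ii) Using $\sqrt{1+b^2}\geq b$ and $b\geq \sinh x$ (from the lower bound already established), I obtain $\partial_x\ln b\geq 1$. This is weaker than what I need. Instead I plan to use $\sqrt{1+b^2}\geq\cosh(\operatorname{arcsinh} b)\geq \cosh x$, which should give $\partial_x\ln b\geq \coth x$. Hence $b/\sinh x$ is non-decreasing in $x$, so $b(r,t)\leq \sinh r\cdot b(r_1,t)/\sinh r_1\leq C r$ on $[0,r_1]$.

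With $b$ controlled, the remaining bounds follow. From $P>(n-1)Q$ and the exponential formulas, $a(r,t)\leq a(r,0)\big(b(r,t)/b(r,0)\big)^{n-1}=(b/\sinh r)^{n-1}$, which is bounded by (ii). Then $s(r,t)=\int_0^r a\,dx\leq Cr$.

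The main obstacle is obtaining a bound that is uniform in $t$ although the boundary value $b(r_0,t)$ may blow up. The key point is that the Riccati-type inequality for $1/b$ in (i) absorbs the boundary term. I also need to check carefully that the inequalities used near $x=0$ are valid, in particular the behavior of $b/\sinh x$ as $x\to0$, which should follow from $(\ref{equn_originr})$.
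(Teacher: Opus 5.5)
Your proof is correct, but it takes a different route from the paper's. Both arguments rest on the same two consequences of Theorem \ref{thm_PQn-1Q2}:
\begin{itemize}
\item the comparison $a\geq b/\sinh r$, which comes from $P\leq Q$;
\item a comparison of $b$ with the hyperbolic profile along $s$. The paper derives $b\geq\sinh s$ and $b_s'\geq\cosh s$ from $P<0$; you derive $b_s'\geq\sqrt{1+b^2}$ from $P>(n-1)Q$.
\end{itemize}

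The paper then treats $s$ as the unknown. The inequality $\partial_r s=a\geq \sinh s/\sinh r$ integrates to the uniform bound $(\ref{inequn_supbd2})$ on the distance function. The pointwise bound on $a$ is then obtained by contradiction using $(\ref{inequn_a2a1})$: if $a$ blew up at some $\bar r_j\leq r_1$, it would blow up on all of $[r_1,\frac{r_1+r_0}{2}]$, which would force $s$ to blow up. Only after that does $b\leq C\sinh r$ follow, from $b/b(r,0)\leq a/a(r,0)$.

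You instead take $\ln b$ as the unknown and derive the Riccati-type inequality $\partial_r\ln b\geq\sqrt{1+b^2}/\sinh r$. This gives two things directly:
\begin{itemize}
\item the explicit, $t$-independent bound $b(r,t)\leq\big(\int_r^{r_0}dx/\sinh x\big)^{-1}$, in which the possibly blowing-up boundary value $b(r_0,t)$ is harmlessly discarded;
\item via $\sqrt{1+b^2}\geq\cosh r$, the monotonicity of $b/\sinh r$ in $r$.
\end{itemize}
The pinching $P>(n-1)Q$ then bounds $a\leq (b/\sinh r)^{n-1}$ with no contradiction argument, and the bound on $s$ comes last by integration.

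What your route buys is a fully constructive proof with constants depending only on $n,r_0,r_1$, obtained in a single pass. The paper's route instead produces the explicit estimate $(\ref{inequn_supbd2})$ on the distance function as a by-product, but only a contradiction-type bound on $a$.

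The concern you raise at $x=0$ is harmless. You only use the monotonicity of $b/\sinh x$ on $(0,r_1]$ together with $b(0,t)=0$. Also, $\sqrt{1+b^2}=\cosh(\operatorname{arcsinh} b)$ is an identity, so that step is exact rather than an inequality.
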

\begin{proof}
It follows from Theorem \ref{thm_PQn-1Q2} and Theorem \ref{thm_existlongtime} that $(\ref{inequn_Q_P_Q1})$ holds on $\overline{M}\times(0,\infty)$, and $(\ref{inequn_P-Q-2})$ holds for $(r,t)\in(0,r_0]\times(0,\infty)$. Thus, using the monotonicity of both $a$ and $b$ in $t$, and by $(\ref{equn_saformul})$, we have that the first inequality in each of the three chains $(\ref{inequn_LBa})-(\ref{inequn_LBs})$ holds.

Since $P(r,t)<0$ for $t>0$, by definition,
\begin{align*}
&b_{ss}''-b\geq0,
\end{align*}
and hence,
\begin{align}
(e^{s}b_s')_s'-(e^{s}b)_s'\geq0,
\end{align}
for $(r,t)\in[0,r_0]\times[0,\infty)$. Here $s=\int_0^ra(x,t)dx$, for $t\geq0$. Integrating this inequality with respect to $s$ and by the fact $b_s'(0,t)=1$ for any $t\geq0$, we obtain
\begin{align}\label{inequn_bsb1}
b_s'-b-e^{-s}\geq0
\end{align}
on $\overline{M}\times[0,\infty)$, and hence by the fact $b(0,t)=0$ for $t\geq0$,
\begin{align}\label{inequn_bsinhslbd}
b\geq \sinh(s)
\end{align}
on $\overline{M}$ for any $t\geq0$. Thus, by $(\ref{inequn_bsb1})$,
\begin{align}
b_s'\geq \cosh(s)
\end{align}
on $\overline{M}$ for any $t\geq0$.

On the other hand, by $(\ref{equn_aintt0})$, $(\ref{equn_bintt0})$ and $(\ref{inequn_Q_P_Q1})$,
\begin{align}\label{inequn_bn-1ab2}
(n-1)\ln(\frac{b(r,t)}{b(r,0)})&=-(n-1)\int_0^tQ(r,\tau)d\tau>-\int_0^tP(r,\tau)d\tau
= \ln(\frac{a(r,t)}{a(r,0)})\\
&\geq -\int_0^tQ(r,\tau)d\tau =\ln(\frac{b(r,t)}{b(r,0)}) \nonumber
\end{align}
for $(r,t)\in[0,r_0]\times(0,\infty)$, and hence, by Lemma \ref{lem_Hp}, for any $0\leq r_1<r_2\leq r_0$,
\begin{align*}
\ln(\frac{a(r_2,t)}{a(r_2,0)})\geq \ln(\frac{b(r_2,t)}{b(r_2,0)})&\geq \ln(\frac{b(r_1,t)}{b(r_2,0)})
=\ln(\frac{b(r_1,t)}{b(r_1,0)})+\ln(\frac{b(r_1,0)}{b(r_2,0)})\\
&\geq \frac{1}{n-1}\ln(\frac{a(r_1,t)}{a(r_1,0)})+\ln(\frac{b(r_1,0)}{b(r_2,0)})
\end{align*}
for $t>0$. Thus,
\begin{align}\label{inequn_a2a1}
\frac{a(r_2,t)}{a(r_2,0)}\geq\,\frac{b(r_1,0)}{b(r_2,0)}\,\big(\frac{a(r_1,t)}{a(r_1,0)}\big)^{\frac{1}{n-1}}
\end{align}
for any $0\leq r_1<r_2\leq r_0$ and $t>0$.

Now, by definition of $s$, $(\ref{inequn_bn-1ab2})$ and $(\ref{inequn_bsinhslbd})$,
\begin{align*}
\frac{\partial s}{\partial r}=a(r,t)\geq \frac{a(r,0)}{b(r,0)} b(r,t) \geq \frac{a(r,0)}{b(r,0)} \sinh(s),
\end{align*}
and hence,
\begin{align}\label{inequn_sabr1}
\frac{ds}{\sinh(s)}\geq \frac{a(r,0)}{b(r,0)} dr
\end{align}
for $t>0$. For any $0< r_1\leq r_2\leq r_0$, integrating $(\ref{inequn_sabr1})$ over $r\in[r_1,r_2]$ yields
\begin{align}
\ln(\frac{e^{s(r_2,t)}-1}{e^{s(r_2,t)}+1})-\ln(\frac{e^{s(r_1,t)}-1}{e^{s(r_1,t)}+1})\geq \int_{r_1}^{r_2}a(r,0)b^{-1}(r,0)dr
\end{align}
for $t>0$. Thus,
\begin{align}
1>\frac{e^{s(r_2,t)}-1}{e^{s(r_2,t)}+1}&\geq\,\frac{e^{s(r_1,t)}-1}{e^{s(r_1,t)}+1}\,e^{\int_{r_1}^{r_2}a(r,0)b^{-1}(r,0)dr}\\
&=\,\frac{e^{s(r_1,t)}-1}{e^{s(r_1,t)}+1}\,\frac{e^{r_2}-1}{e^{r_2}+1}\,\frac{e^{r_1}+1}{e^{r_1}-1}
\end{align}
for $t>0$. Now take $r_2=r_0$, and hence
\begin{align}
&\frac{e^{s(r,t)}-1}{e^{s(r,t)}+1}<\frac{e^{r_0}+1}{e^{r_0}-1}\,\frac{e^{r}-1}{e^{r}+1},\\
&\label{inequn_supbd2}s(r,t)<\ln(\frac{e^{r_0+r}-1}{e^{r_0}-e^r})<\infty,
\end{align}
for any $(r,t)\in(0,r_0)\times(0,\infty)$, which proves the second inequality in $(\ref{inequn_LBs})$. Thus,
\begin{align*}
a(0,t)=\lim_{r\to0_+}\frac{s(r,t)}{r}\leq \frac{e^{r_0}+1}{e^{r_0}-1},
\end{align*}
for any $t>0$.

Now, we show that for any $r_1\in(0,r_0)$, $a(r,t)$ and $b(r,t)$ are uniformly bounded on $[0,r_1]\times[0,\infty)$. Assume for contradiction that there exists a sequence of points $\{(\bar{r}_j,t_j)\}_{j=1}^\infty$ such that $a(\bar{r}_j,t_j)\to\infty$, $0\leq \bar{r}_j\leq r_1$ and $t_j\to\infty$. Then for any $r\in[r_1,\frac{r_1+r_0}{2}]$, taking $r_1=\bar{r}_j$ and $r_2=r$ in $(\ref{inequn_a2a1})$, one gets
\begin{align}
a(r,t_j)\geq\,\frac{a(r,0)b(\bar{r}_j,0)}{b(r,0)}\,\big(\frac{a(\bar{r}_j,t_j)}{a(\bar{r}_j,0)}\big)^{\frac{1}{n-1}},
\end{align}
for $r\in[r_1,\frac{r_1+r_0}{2}]$, and hence,
\begin{align}
s(\frac{r_1+r_0}{2},t_j)-s(r_1,t_j)=\int_{r_1}^{\frac{r_1+r_0}{2}}a(r,t_j)dr\to\infty
\end{align}
as $t_j\to\infty$, contradicting to $(\ref{inequn_supbd2})$. Therefore, there exists $C=C(r_1)>0$ such that
\begin{align}
a(r,t)\leq C
\end{align}
for $(r,t)\in[0,r_1]\times[0,\infty)$, and hence, by $(\ref{inequn_bn-1ab2})$,
\begin{align}
b(r,t)\leq \frac{C}{a(r,0)}b(r,0)=C\sinh(r)
\end{align}
for $(r,t)\in[0,r_1]\times[0,\infty)$. This completes the proof of the Theorem.

\end{proof}

Next, for each $r_1\in(0,r_0)$, we prove that $P(r,t)$ and $Q(r,t)$ are uniformly bounded on $[0,r_1]\times[0,\infty)$,and moreover, $P(r,t)$ and $Q(r,t)$ converge to $0$ uniformly for $r\in[0,r_1]$ as $t\to\infty$.

\begin{thm}\label{thm_QPn-1Qunibd2}
Let $n\geq3$. Assume that $\eta=\eta(t)\in C^k([0,\infty))$ is non-decreasing for $t\geq0$ and $\eta'(t)>0$ on $(0,t_0)$ for some $t_0>0$. Assume the compatibility conditions $(\ref{equn_comptk})$ holds for some $k\geq2$. Then for any $r_1\in(0,r_0)$, there exists a constant $C=C(r_1)>0$ such that
\begin{align}\label{inequn_Qlb2}
-C\leq (n-1)Q<P\leq Q<0
\end{align}
on $[0,r_1]\times(0,\infty)$. Moreover, for each $m\in \mathbb{N}$ and $r_1\in(0,r_0)$, there exists $C=C(n,m,r_1)>0$, such that
\begin{align*}
|\nabla^mRm_{g(t)}|_{g(t)}\leq C
\end{align*}
on $\overline{B_{r_1}}(0)\times[0,\infty)$.
\end{thm}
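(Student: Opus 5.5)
Since Theorem \ref{thm_PQn-1Q2} and Theorem \ref{thm_existlongtime} already give $(n-1)Q<P\leq Q<0$ on $\overline{M}\times(0,\infty)$, the new content of $(\ref{inequn_Qlb2})$ is a uniform lower bound for $Q$ on $[0,r_1]\times(0,\infty)$. Once $Q$ is bounded below, $(n-1)Q<P\leq Q<0$ bounds both $K=F/(n-1)=P/(n-1)-1$ and $L=(G-K)/(n-2)$, hence $|Rm_{g(t)}|$, on $\overline{B_{r_1}}(0)\times[0,\infty)$. The higher-derivative bounds then follow from Shi-type local derivative estimates for the normalized Ricci flow $(\ref{equn_NRF1})$ on interior parabolic cylinders. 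These estimates apply because Lemma \ref{lem_absBD} gives $a(r,0)\le a\le C$ and $b(r,0)\le b\le Cr$ on $[0,r_1']\times[0,\infty)$ for some $r_1'\in(r_1,r_0)$. Consequently the $g(t)$-geodesic balls of a fixed radius around points of $\overline{B_{r_1}}(0)$ stay inside $B_{r_1'}(0)$ for all $t$, and we can work on time intervals $[t-1,t+1]$.

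To get the lower bound for $Q$ I would use the structure already found in Section \ref{section_3}. By $(\ref{equn_Pn-1Qs})$ and Lemma \ref{lem_Hp}, $P-(n-1)Q$ is increasing in $s$. It is also positive, and at the origin it equals $-(n-2)Q(0,t)$. So it suffices to bound $P-(n-1)Q$ from above on $[0,r_1]$, which in turn follows from the identity $(\ref{equn_Pn-1Qb0})$,
\begin{align*}
P-(n-1)Q=-(n-1)(n-2)\Big[b^{-2}+1-\frac{H_g^2}{(n-1)^2}\Big],\qquad H_g=(n-1)b^{-1}b_s'.
\end{align*}
The task therefore reduces to bounding $b_s'$ from above on $[0,r_1]$, uniformly in $t$. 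Here $b$ is bounded above and below by Lemma \ref{lem_absBD}, with $b\geq \sinh r$ away from the origin; near $r=0$ one uses $b_s'(0,t)=1$ and a Taylor argument.

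For $b_s'$ I would use convexity: $P<0$ gives $b_{ss}''\geq b>0$, so $b_s'$ is increasing in $s$. Fix $r_1<r_1'<r_0$. For $r\le r_1$,
\begin{align*}
b_s'(r,t)\,\big(s(r_1',t)-s(r,t)\big)\leq b(r_1',t)-b(r,t)\leq b(r_1',t)\leq Cr_1'.
\end{align*}
It remains to bound $s(r_1',t)-s(r_1,t)=\int_{r_1}^{r_1'}a\,dr$ from below, and this holds because $a\geq a(r,0)=1$, so the difference is at least $r_1'-r_1$. Hence $b_s'\leq C(r_1,r_1')$ on $[0,r_1]\times[0,\infty)$, which gives $H_g\le C$ for $r\geq r_1/2$. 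For small $r$ I would use that $P-(n-1)Q$ is monotone in $s$, so its value at $r_1$ bounds its values at smaller $r$. Thus only the point $r=r_1$ is needed, where $b\geq\sinh r_1$. Combining these, $0<-(n-2)Q(0,t)\le P-(n-1)Q\le C$ on $[0,r_1]$.

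To pass from this to a lower bound on $Q$ itself, note that $P\le Q$ gives $P-(n-1)Q\geq -(n-2)Q$, so $-Q\le C/(n-2)$, which bounds $Q$ below everywhere on $[0,r_1]$. The curvature bound is then done. The step I expect to be most delicate is the derivative estimates. The flow is only known to be smooth in the interior, with uniformly bounded curvature on $B_{r_1'}\times[0,\infty)$, and the metrics are not uniformly equivalent to a fixed one near $\partial M$. One must therefore run Shi's local estimates on the $g(t)$-balls of uniform radius just described, with the $(n-1)g$ term in $(\ref{equn_NRF1})$ treated as a harmless lower-order term. Near $t=0$ the bounds come from the smoothness of the solution on compact time intervals.
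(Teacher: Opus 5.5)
\paragraph{Verdict: there is a genuine gap at the lower bound on $Q$.}
Your bound on $b_s'$ is correct. The convexity estimate $b_s'(r,t)\,(s(r_1',t)-s(r,t))\le b(r_1',t)$, together with $a\ge 1$, gives $b_s'\le C$. The resulting bound $0<P-(n-1)Q\le C$ on $[0,r_1]\times(0,\infty)$ then follows from (\ref{equn_Pn-1Qb0}) at $r=r_1$ and monotonicity in $s$. This is a direct version of the first step of the paper's proof.

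The gap is the next step. From $P\le Q$ you get $P-(n-1)Q\le Q-(n-1)Q=-(n-2)Q$, not $\ge$. So the pinching reads $0<P-(n-1)Q\le -(n-2)Q$. An upper bound on $P-(n-1)Q$ therefore bounds $Q$ from below only at the origin, where $P=Q$.

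Geometrically, $P-(n-1)Q=-(n-1)(n-2)(L+1)$, so you have only controlled the tangential curvature $L$. The radial curvature $K=\frac{P}{n-1}-1=-b^{-1}b_{ss}''$ is untouched, and $1\le b_s'\le C$ gives no upper bound on $b_{ss}''$. The scenario $Q(r_1,t_j)\to-\infty$ with $P-(n-1)Q$ bounded (hence $(P-Q)(r_1,t_j)\to-\infty$) is consistent with everything you use. Until it is excluded, neither the bound on $|Rm|$ nor the Shi-type estimates are available.

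\paragraph{What the paper does instead.}
The paper closes this with a parabolic argument. It converts the integrated bound $\int_0^t(-P)\,d\tau=\ln\frac{a(r,t)}{a(r,0)}\le C$ from Lemma \ref{lem_absBD} into a pointwise bound.

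First, by (\ref{equn_Pn-1Qt}), $R^\circ=P+(n-1)Q<0$ satisfies $\partial_tR^\circ\ge\Delta_gR^\circ$. So a lower bound for $R^\circ$ on the circle $r=r_1$ propagates to $[0,r_1]\times[0,\infty)$.

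Suppose that bound failed at $r=r_1$ along $t_j\to\infty$. Then $(Q-P)(r_1,t_j)\to\infty$. The nonnegative function $Q-P$ solves the linear equation (\ref{equn_P-Qevol}). In the $r$-variable its coefficients are bounded near $r_1$ once $|\partial_ra|\le C$ there. That bound comes from integrating $\partial_t(a^{-1}a_r'-(n-1)b^{-1}b_r')=2(n-1)ab^{-1}b_s'(P-Q)$ in time, using that $\int_0^t(-P)\,d\tau$ and $\int_0^t(-Q)\,d\tau$ are bounded.

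The Harnack inequality then makes $-P\ge Q-P$ large on parabolic cylinders of fixed size, around infinitely many well-separated times. This forces $a(r_1,t)=a(r_1,0)e^{-\int_0^tP\,d\tau}\to\infty$, contradicting Lemma \ref{lem_absBD}.

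You need this, or some other way to rule out concentration in time of $-P$, before your curvature bound and your derivative estimates go through.
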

\begin{proof}
We first show that $P-(n-1)Q$ is uniformly bounded from above on $[0,r_1]\times[0,\infty)$ for any $r_1\in(0,r_0)$.

Assume, for contradiction, that there exists a sequence$\{\bar{r}_j\}_{j=1}^\infty$ on $[0,r_1]$ and $\{t_j\}_{j=1}^\infty\subseteq [0,\infty)$ with $t_j\to\infty$ as $j\to\infty$ such that
\begin{align*}
(P-(n-1)Q)(\bar{r}_j,t_j)\to \infty
\end{align*}
as $j\to\infty$. Since $P-(n-1)Q$ is increasing in $r$ on $[0,r_0]$ for any $t>0$, we obtain
\begin{align}\label{equn_Pn-1Qblup}
(P-(n-1)Q)(r,t_j)=-(n-1)(n-2)[b(r,t_j)^{-2}(1-(b_s'(r,t_j))^2)+1]\to +\infty
\end{align}
uniformly in $r$ on $[r_1,\frac{r_1+r_0}{2}]$ as $j\to\infty$. Since $b(r,t)$ is uniformly bounded from below on $[r_1,\frac{r_1+r_0}{2}]\times[0,\infty)$ by Lemma \ref{lem_absBD}, we obtain
\begin{align*}
b_s'(r,t_j)\to \infty
\end{align*}
uniformly in $r$ on $[r_1,\frac{r_1+r_0}{2}]$, as $j\to\infty$. Therefore, by inequality $(\ref{inequn_LBs})$ on $[0,\frac{r_1+r_0}{2}]\times[0,\infty)$,
\begin{align*}
b(\frac{r_1+r_0}{2},t_j)-b(r_1,t_j)=\int_{s(r_1,t_j)}^{s(\frac{r_1+r_0}{2},t_j)}b_s'(r,t_j)ds\to\infty,
\end{align*}
as $j\to\infty$, contradicting with Lemma \ref{lem_absBD}. 
 Therefore, for any $r_1\in(0,r_0)$, there exists a constant $C_1=C_1(r_1)>0$ such that
 \begin{align}
 &(P-(n-1)Q)(r,t)\leq C_1,\\
 &\label{inequn_b'supbd}b_s'(r,t)\leq C_1,
 \end{align}
for $(r,t)\in[0,r_1]\times[0,\infty)$, where for $(\ref{inequn_b'supbd})$ we have used the first equality in $(\ref{equn_Pn-1Qblup})$ again. By the definition of $P$, since $P<0$ on $\overline{M}\times(0,\infty)$ and $b_s'(0,t)=1$ for $t\geq0$, we obtain that $b_s'$ is increasing in $s$ and
\begin{align}
1\leq b_s'(r,t)\leq C_1
\end{align}
for $(r,t)\in[0,r_1]\times[0,\infty)$.

Next, we show that for any $r_1\in(0,r_0)$, $P(r,t)$ and $Q(r,t)$ are uniformly bounded from below on $[0,r_1]\times[0,\infty)$.

By $(\ref{equn_envP})$ and $(\ref{equn_envQ})$, we get
\begin{align}\label{equn_Pn-1Qt}
\partial_t(P+(n-1)Q)=\Delta_g(P+(n-1)Q)+2(1-n)(P+(n-1)Q)+2(P^2+(n-1)Q^2)
\end{align}
on $\overline{M}\times[0,\infty)$, with
\begin{align*}
\frac{2}{n}(P+(n-1)Q)^2\leq 2(P^2+(n-1)Q^2)\leq 2(P+(n-1)Q)^2,
\end{align*}
by $(\ref{inequn_Q_P_Q1})$, and hence,
\begin{align}\label{equn_Pn-1Qt2}
\partial_t(P+(n-1)Q)=\Delta_g(P+(n-1)Q)+2((1-n)+f)(P+(n-1)Q)
\end{align}
on $\overline{M}\times[0,\infty)$, with
\begin{align*}
f= \frac{(P^2+(n-1)Q^2)}{(P+(n-1)Q)}<0,
\end{align*}
continuous on $\overline{M}\times(0,\infty)$. By the maximum principle for $(\ref{equn_Pn-1Qt2})$, once $P+(n-1)Q$ is uniformly bounded on $\{r_1\}\times[0,\infty)$, $P+(n-1)Q$ is uniformly bounded on $[0,r_1]\times[0,\infty)$.

Assume, to the contrary, that there exists a sequence $t_j\nearrow+\infty$ as $j\to\infty$, such that $(P+(n-1)Q)(r_1,t_j)\to -\infty$. Then by $(\ref{inequn_pin1})$,
\begin{align}
P(r_1,t_j)\to-\infty,\,\,\,Q(r_1,t_j)\to-\infty
\end{align}
as $j\to\infty$. Since $P-(n-1)Q$ is uniformly bounded on $[0,r_1]\times[0,\infty)$, we obtain
\begin{align}\label{inequn_P-Q2blup}
P(r_1,t_j)-Q(r_1,t_j)\to-\infty
\end{align}
as $j\to\infty$. Let $\delta=\frac{1}{2}\min\{r_1, r_0-r_1\}$. On $[r_1-\delta, r_1+\delta]\times(0,\infty)$, we rewrite $(\ref{equn_P-Qevol})$ as
\begin{align*}
\partial_t(P-Q)=&a^{-2}\partial_r^2(P-Q)+[-a^{-3}\partial_ra\,+(n-1)a^{-1}b^{-1}b_s']\,\partial_r(P-Q)\\
&+2[1-n-nb^{-2}-\frac{2}{n-2}(P-(n-1)Q)](P-Q).
\end{align*}
Now we define the continuous functions
\begin{align*}
&f_1(r,t)=-a^{-3}\partial_ra\,+(n-1)a^{-1}b^{-1}b_s',\\
&f_2(r,t)=2[1-n-nb^{-2}-\frac{2}{n-2}(P-(n-1)Q)],
\end{align*}
on $[r_1-\delta, r_1+\delta]\times(0,\infty)$. Therefore,
\begin{align}\label{equn_P-Qt2}
\partial_t(P-Q)=a^{-2}\partial_r^2(P-Q)+f_1(r,t)\,\partial_r(P-Q)+f_2(r,t)(P-Q).
\end{align}

{\bf Claim:} there exists $C=C(r_1)>0$ such that
\begin{align}
|\partial_ra|\leq C
\end{align}
on $[r_1-\delta, r_1+\delta]\times(0,\infty)$.

Once the claim holds, since $P-(n-1)Q$ and $b_s'$ are uniformly bounded on $[r_1-\delta, r_1+\delta]\times(0,\infty)$, by Lemma \ref{lem_absBD}, there exists $C_2>0$ such that
\begin{align*}
|f_1(r,t)|+|f_2(r,t)|\leq C_2
\end{align*}
on $[r_1-\delta, r_1+\delta]\times(0,\infty)$. Hence, it follows from the parabolic Harnack inequality for $(\ref{equn_P-Qt2})$ that there exists $C>0$ independent of $j\in \mathbb{N}$ such that
\begin{align}
(Q-P)(r,t)\geq C(Q-P)(r_1,t_j)>0
\end{align}
for $(r,t)\in U_{j,\frac{\delta}{2}}=\{(r,t):\,|r-r_1|\leq \frac{\delta}{2},\,\,t_j-\frac{\delta^2}{4}\leq t\leq t_j\}$ and any $t_j\geq \delta^2$. Now we choose a subsequence $\{t_{j_k}\}_{k=1}^\infty\subseteq \{t_j\}_{j=1}^\infty$ such that $t_{j_1}>\delta^2$ and $t_{j_{k+1}}>t_{j_k}+\delta^2$, for $k\geq1$. Since $Q<0$ for $t>0$, we obtain
\begin{align}
-P(r,t)\geq (Q-P)(r_1,t_{j_k})>0
\end{align}
for $(r,t)\in U_{j_k,\frac{\delta}{2}}$ and any $k\in \mathbb{N}$, and hence by $(\ref{inequn_P-Q2blup})$,
\begin{align*}
a(r_1,t_{j_m})=a(r_1,0)e^{\int_0^{t_{j_m}}-P(r_1,\tau)d\tau}&\geq a(r_1,0)e^{\sum_{k=1}^m\int_{t_{j_k}-\frac{\delta^2}{4}}^{t_{j_k}}(Q-P)(r_1,t_{j_k})d\tau}\\
&=a(r_1,0)e^{\frac{\delta^2}{4}\sum_{k=1}^m(Q-P)(r_1,t_{j_k})}\to\infty,
\end{align*}
as $m\to\infty$, contradicting with Lemma \ref{lem_absBD}. This proves the uniform boundedness of $P+(n-1)Q$, and hence $P$ and $Q$, on $[0,r_1]\times[0,\infty)$.

Now, it remains to prove the claim.

It follows by direct calculation that
\begin{align*}
&\partial_t(a_r')=\partial_r(\partial_ta)=-\partial_r(Pa)=-P\partial_ra-a\partial_rP,\\
&\partial_t(b_r')=\partial_r(\partial_tb)=-\partial_r(Qb)=-Q\partial_rb-b\partial_rQ,
\end{align*}
and hence,
\begin{align*}
\partial_t(a^{-1}a_r'-(n-1)b^{-1}b_r')=\partial_r((n-1)Q-P)&=a\partial_s((n-1)Q-P)\\
&=2(n-1)ab^{-1}b_s'\,(P-Q)
\end{align*}
where we have used $(\ref{equn_NRFa})$, $(\ref{equn_NRFb})$, $(\ref{equn_Pn-1Qs})$ and the definition of $P$ and $Q$. Integrating both sides of this equation with respect to $t$, we obtain
\begin{align}\label{equn_arbr}
\,\,\,&a(r,t)^{-1}a_r'(r,t)-(n-1)b(r,t)^{-1}b_r'(r,t)\\
=\,\,\,&a(r,0)^{-1}a_r'(r,0)-(n-1)b(r,0)^{-1}b_r'(r,0)\nonumber\\
+2&(n-1)\int_0^ta(r,\tau)b(r,\tau)^{-1}b_s'(r,\tau)P(r,\tau)d\tau-2(n-1)\int_0^ta(r,\tau)b(r,\tau)^{-1}b_s'(r,\tau)Q(r,\tau)d\tau,\nonumber
\end{align}
on $[r_1-\delta, r_1+\delta]\times(0,\infty)$. On the other hand, $a,\,a^{-1},\,b,\,b^{-1},\,b_s'$, and hence $b_r'=ab_s'$, are uniformly bounded on $[r_1-\delta, r_1+\delta]\times(0,\infty)$, and moreover, we have $P<Q<0$ on $[r_1-\delta, r_1+\delta]\times(0,\infty)$ and the integrals
\begin{align*}
&\int_0^t-P(r,\tau)d\tau=\ln(\frac{a(r,t)}{a(r,0)}),\\
&\int_0^t-Q(r,\tau)d\tau=\ln(\frac{b(r,t)}{b(r,0)})
\end{align*}
are uniformly bounded on $[r_1-\delta, r_1+\delta]\times(0,\infty)$. Therefore, by $(\ref{equn_arbr})$, there exists a constant $C_3=C_3(r_1)>0$ such that
\begin{align}
|a_r'(r,t)|\leq C_3
\end{align}
on $[r_1-\delta, r_1+\delta]\times(0,\infty)$. This proves the {\bf Claim}. Therefore, $(\ref{inequn_Qlb2})$ holds. By Shi's interior estimates in \cite{Shi} on the solution to the Ricci flow, for each $j\geq0$ and $r_1\in(0,r_0)$, there exists $C=C(n,j,r_1)>0$ such that
\begin{align*}
|\nabla^jRm_{g(t)}|_{g(t)}\leq C
\end{align*}
on $\overline{B_{r_1}}(0)\times[0,\infty)$. This completes the proof of the theorem.

\end{proof}

\begin{thm}
Let $n\geq3$. Assume that $\eta=\eta(t)\in C^k([0,\infty))$ is non-decreasing for $t\geq0$ and $\eta'(t)>0$ on $(0,t_0)$ for some $t_0>0$. Assume the compatibility conditions $(\ref{equn_comptk})$ holds for some $k\geq2$. Then $P$ and $Q$ converge to zero uniformly in $r$ on $[0,r_1]$ as $t\to\infty$, for any $r_1\in(0,r_0)$. Therefore, $g(t)$ converges locally uniformly in the interior of $\overline{M}$ to a rotationally symmetric locally hyperbolic metric $g_{\infty}$, as $t\to\infty$.
\end{thm}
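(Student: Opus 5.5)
The plan is to exploit monotonicity in $t$ together with the uniform interior bounds already established. By $(\ref{equn_aintt0})$, $(\ref{equn_bintt0})$ and Theorem \ref{thm_PQn-1Q2}, both $a(r,t)$ and $b(r,t)$ are non-decreasing in $t$, since $P<0$ and $Q<0$ for $t>0$. By Lemma \ref{lem_absBD} they are uniformly bounded on $[0,r_1]\times[0,\infty)$. Hence, for each fixed $r\in[0,r_0)$, the limits $a_\infty(r)=\lim_{t\to\infty}a(r,t)$ and $b_\infty(r)=\lim_{t\to\infty}b(r,t)$ exist and are finite. Equivalently,
\begin{align*}
\int_0^\infty(-P)(r,\tau)\,d\tau=\ln\frac{a_\infty(r)}{a(r,0)}<\infty,\qquad \int_0^\infty(-Q)(r,\tau)\,d\tau=\ln\frac{b_\infty(r)}{b(r,0)}<\infty,
\end{align*}
and these bounds are uniform in $r\in[0,r_1]$ (for $Q$ near $r=0$ one uses $b\le Cr$, $b\ge\sinh r$, so that $b/b(r,0)$ stays bounded).

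Next I upgrade integrability to pointwise decay. By Theorem \ref{thm_QPn-1Qunibd2}, for every $m$ the quantities $|\nabla^m Rm_{g(t)}|_{g(t)}$ are uniformly bounded on $\overline{B_{r_1}}(0)\times[0,\infty)$. Via the evolution equations $(\ref{equn_envP})$, $(\ref{equn_envQ})$, this gives a uniform bound on $|\partial_tP|$ and $|\partial_tQ|$ there. Here one uses that $\Delta_g P$ is controlled by $|\nabla^2Rm|$, that $b^{-2}(P-Q)$ is bounded since $P-Q=O(s^2)$ near the origin with uniform constants from the derivative bounds, and that $a,b$ are uniformly comparable to the initial metric by Lemma \ref{lem_absBD}. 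Then $P(r,\cdot)$ and $Q(r,\cdot)$ are uniformly Lipschitz in $t$, nonpositive, and have uniformly bounded integrals over $[0,\infty)$. A standard argument shows they tend to $0$: if $P(r_j,t_j)\le-\epsilon$ with $t_j\to\infty$, then $P\le-\epsilon/2$ on $[t_j,t_j+\epsilon/(2L)]$. Passing to a subsequence with disjoint such intervals and using monotonicity of $\int_0^t(-P)$ in $t$, uniform in $r$ (take $r_j\to r_*$ and continuity in $r$ via the uniform spatial derivative bounds), this contradicts the finiteness of $\int_0^\infty(-P)$. Hence $P,Q\to0$ uniformly on $[0,r_1]$.

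Finally, for convergence of the metrics: $a(\cdot,t)$ and $b(\cdot,t)$ converge monotonically, and hence by Dini's theorem uniformly on $[0,r_1]$, since the limits are continuous as $C^1$ limits. Continuity follows because the uniform bounds on $\nabla^mRm$ together with the comparability of $g(t)$ to $g(0)$ on $\overline{B_{r_1}}(0)$ give uniform $C^m$ bounds on $g(t)$ relative to $g(0)$; this is the standard Hamilton argument, integrating $\partial_t\nabla^m g$ with $|\partial_t g|=2|\text{Ric}+(n-1)g|$ bounded and in fact integrable in $t$. Arzelà--Ascoli then gives smooth local convergence to $g_\infty=a_\infty^2dr^2+b_\infty^2g_{\mathbb{S}^{n-1}}$. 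This limit is rotationally symmetric, and since $P,Q\to0$ we get $K=L=-1$, so $g_\infty$ is locally hyperbolic.

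The main obstacle I expect is the uniform control of $\partial_tP$ near the origin, where the terms $b^{-2}(P-Q)$ look singular. I would handle this by expressing $\partial_t P$ geometrically instead, since $\partial_t Rm$ equals $\Delta Rm+Rm*Rm$ plus lower order terms under $(\ref{equn_NRF1})$, which is bounded by the Shi-type bounds of Theorem \ref{thm_QPn-1Qunibd2}.
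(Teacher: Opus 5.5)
Your proof that $P,Q\to0$ uniformly on $[0,r_1]$ is correct, and it takes a different route from the paper.

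\textbf{Comparison with the paper.} The paper applies a parabolic Harnack inequality to $(\ref{equn_Pn-1Qt2})$ for $P+(n-1)Q\le0$ on the annulus $|r-r_1|\le\delta$. The coefficients there are bounded only because of the $|\partial_r a|$ bound from the Claim in Theorem \ref{thm_QPn-1Qunibd2}. This spreads $(P+(n-1)Q)(r_1,t_j)<-\epsilon$ over parabolic cylinders of fixed size. Summing over disjoint cylinders contradicts $a(r_1,t)\le C$, and a second Harnack/Shi step gives uniformity on $\overline{B_{r_1}}(0)$.

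You instead turn the Shi-type bounds of Theorem \ref{thm_QPn-1Qunibd2} into Lipschitz bounds for $P$ in $t$ and $r$. That does the same propagation without Harnack or the $\partial_r a$ estimate. Your $r_j\to r_*$ argument also gives uniformity in $r$ directly. Both proofs rest on $\int_0^\infty(-P)(r,\tau)\,d\tau=\ln a_\infty(r)\le C$; yours is the more elementary implementation.

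\textbf{The gap: the final step on metric convergence.} Hamilton's argument does not give $C^m$ bounds on $g(t)$ relative to $g(0)$ that are uniform on $[0,\infty)$. Since $\partial_t\Gamma(g(t))=g^{-1}*\nabla\mathrm{Ric}$, the bound on $\nabla^{g(0)}g(t)$ over $[0,T]$ grows with $T$ unless $\int_0^\infty|\nabla^j\mathrm{Ric}_{g(t)}|\,dt<\infty$ for $1\le j\le m$. You have no such integrability.

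What is integrable in $t$ (pointwise in $r$) is $|\mathrm{Ric}+(n-1)g|=\sqrt{P^2+(n-1)Q^2}$. That is only the $j=0$ level, and it merely reproduces the equivalence $g(t)\sim g(0)$ already in Lemma \ref{lem_absBD}. Bounded curvature plus uniform equivalence does not control derivatives of the metric coefficients in fixed coordinates.

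Consequently three things in your last paragraph are unsupported:
\begin{itemize}
\item the continuity of $a_\infty,b_\infty$ that your Dini argument needs;
\item the smooth Arzel\`a--Ascoli convergence;
\item the step ``$P,Q\to0$, hence $K=L=-1$ for $g_\infty$''.
\end{itemize}
The paper is equally terse here: it asserts $C^k$ convergence of $a,b$ from monotonicity and Lemma \ref{lem_absBD} alone.

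\textbf{A repair that suffices for the statement.}

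Away from the origin: on compact subsets of $(0,r_0)$, use identity $(\ref{equn_arbr})$ with $0\le Q-P\le-P$, $\int_0^\infty(-P)\,d\tau\le C$, and the bounds on $a^{\pm1},b^{\pm1},b_s'$. This gives $|\partial_ra|\le C$ uniformly in $t$, and $b_r'=ab_s'$ is bounded. So the limits are locally Lipschitz and Dini applies there.

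Up to the origin: set $w=P+(n-1)Q$. By $(\ref{equn_Pn-1Qt})$, $\partial_t(-w)\le\Delta_g(-w)-2(n-1)(-w)$. The maximum principle on $B_{r_1}(0)$ then gives $\sup_{B_{r_1}(0)}(-w)(\cdot,t)\le\max_{\tau\le t}e^{-2(n-1)(t-\tau)}(-w)(r_1,\tau)$. Combine this with a forward Harnack bound on the annulus, $(-w)(r_1,\tau)\le C\int_\tau^{\tau+\delta^2}(-w)(r_1,\sigma)\,d\sigma$. This yields $\int_0^\infty\sup_{[0,r_1]}(-P)\,dt<\infty$, so $\ln a$ and $\ln b$ are uniformly Cauchy on $[0,r_1]$.

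Hyperbolicity of the limit: this needs no $C^2$ convergence in $r$. As a function of $s$, $b$ solves $b_{ss}''=-Kb$ with $b=0$, $b_s'=1$ at $s=0$, and $K\to-1$ uniformly. Hence $b_\infty=\sinh(s_\infty)$ with $s_\infty(r)=\int_0^r a_\infty$, i.e.\ $g_\infty=ds_\infty^2+\sinh^2(s_\infty)\,g_{\mathbb{S}^{n-1}}$.

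Smoothness of $g_\infty$ in the original coordinates would still need a further argument, such as time-integrability of higher derivatives of $P$.
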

\begin{proof}
The proof of the theorem is similar to the second part of the proof of Theorem \ref{thm_QPn-1Qunibd2}. By the proof of Theorem \ref{thm_QPn-1Qunibd2}, for each $r_1\in(0,r_0)$, $P, \,Q$, $b_s'$, and hence the function $f$ in $(\ref{equn_Pn-1Qt2})$, are uniformly bounded on $[0,r_1]\times[0,\infty)$, and moreover, by the claim in the proof of Theorem \ref{thm_QPn-1Qunibd2}, $|a_r(r,t)|$ is uniformly bounded on $[r_1-\delta,r_1+\delta]\times[0,\infty)$.

Now assume there exists a constant $\epsilon>0$ and an increasing sequence $\{t_j\}_{j=1}^\infty$ such that $t_j\to\infty$ and $(P+(n-1)Q)(r_1,t_j)<-\epsilon$. 
We rewrite $(\ref{equn_Pn-1Qt2})$ as
\begin{align*}
&\partial_t(P+(n-1)Q)-a^{-2}\partial_r^2(P+(n-1)Q)\\
=\,\,&f_3(r,t)\partial_r(P+(n-1)Q)+f_4(r,t)(P+(n-1)Q)
\end{align*}
on $[r_1-\delta,r_1+\delta]\times[0,\infty)$, where $f_3=-a^{-3}\partial_ra\,+(n-1)a^{-1}b^{-1}b_s'$ and $f_4=2((1-n)+f)$ are uniformly bounded on $[r_1-\delta,r_1+\delta]\times[0,\infty)$. Recall that $P+(n-1)Q\leq0$ on $\overline{M}\times[0,\infty)$. Thus it follows from the parabolic Harnack inequality for this equation that there exists $C>0$ independent of $j\in \mathbb{N}$ such that
\begin{align}
(P+(n-1)Q)(r,t)\leq C\,(P+(n-1)Q)(r_1,t_j)\leq - C\epsilon,
\end{align}
and hence by $(\ref{inequn_pin1})$,
\begin{align}
P(r,t)\leq \frac{-C\,\epsilon}{n},
\end{align}
for $(r,t)\in U_{j,\frac{\delta}{2}}=\{(r,t):\,|r-r_1|\leq \frac{\delta}{2},\,\,t_j-\frac{\delta^2}{4}\leq t\leq t_j\}$ and any $t_j\geq \delta^2$. Now we choose a subsequence $\{t_{j_k}\}_{k=1}^\infty\subseteq \{t_j\}_{j=1}^\infty$ such that $t_{j_1}>\delta^2$ and $t_{j_{k+1}}>t_{j_k}+\delta^2$, for $k\geq1$. Therefore,
\begin{align*}
a(r_1,t_{j_m})&=a(r_1,0)e^{-\int_0^{t_{j_m}}P(r_1,\tau)d\tau}\\
&\geq a(r_1,0)e^{\sum_{k=1}^m\int_{t_{j_k}-\frac{\delta^2}{4}}^{t_{j_k}}\frac{C\,\epsilon}{n}d\tau}\\
&=a(r_1,0)e^{\frac{\delta^2C\epsilon}{4n}\,m}\to\infty,
\end{align*}
as $m\to\infty$, contradicting with Lemma \ref{lem_absBD}. Therefore,
\begin{align*}
(P+(n-1)Q)(r_1,t)\to 0
\end{align*}
as $t\to\infty$. Since by Theorem \ref{thm_QPn-1Qunibd2}, $P$ and $Q$ are uniformly bounded in $C^k$-norm on $\overline{B_{r_1+\delta}}(0)\times[0,\infty)$ for any $k\geq0$, then by the Harnack inequality for $(\ref{equn_Pn-1Qt2})$,
\begin{align*}
(P+(n-1)Q)\to 0
\end{align*}
uniformly on $\overline{B_{r_1}}(0)$, as $t\to\infty$. Again, by Shi's interior estimates on the Ricci flow,
\begin{align*}
(P+(n-1)Q)\to 0
\end{align*}
uniformly in $C^k$-norm on $\overline{B_{r_1}}(0)$ for any $k\geq0$, as $t\to\infty$. Moreover, $a(r,t)$ and $b(r,t)$ are increasing in $t$, and converge to $a_{\infty}(r)$ and $b_{\infty}(r)$ locally uniformly in $C^k$-norm for any $k\geq0$ on $B_{r_0}(0)$, by $(\ref{equn_aintt0})$, $(\ref{equn_bintt0})$ and Lemma \ref{lem_absBD}. This completes the proof of the theorem.

\end{proof}

\vskip0.2cm

\section{Completeness of the limit metric}\label{section_5}
Let $\overline{M}$ be of dimension $n\geq3$. In this section, we will prove Theorem \ref{thm_convern345}, i.e., the solution $g(t)$ converges locally uniformly to the complete hyperbolic metric on $B_{r_0}(0)$. To this end, it suffices to prove that the volume of $B_{r_0}(0)$ under the metric $g_{\infty}$ is infinity, as $g_{\infty}=a_{\infty}(r)^2dr^2+b_{\infty}(r)^2g_{\mathbb{S}^{n-1}}$ is rotationally symmetric and locally hyperbolic.

\begin{proof}[Proof of Theorem \ref{thm_convern345}.]
It suffices to prove that the volume of $B_{r_0}(0)$ is infinity under the metric $g_{\infty}$. Since $a(r,t)$ and $b(r,t)$ are increasing in $t$, it remains to prove that
\begin{align}
\text{Vol}_{g(t)}(B_{r_0}(0))=\omega_n\int_0^{r_0}a(r,t)b(r,t)^{n-1}dr\to\infty\,
\end{align}
as $t\to\infty$, where $\omega_n$ is the area of the unit sphere $\mathbb{S}^{n-1}$ in $\mathbb{R}^n$.

Let $R_g$ be the scalar curvature of $g$, and define
\begin{align*}
R_g^\circ=R_g+n(n-1)=P+(n-1)Q.
\end{align*}
By $(\ref{equn_NRF1})$,
\begin{align*}
\frac{d}{dt} \text{Vol}_{g(t)}(B_{r_0}(0))&=\int_{B_{r_0}(0)}-R_{g(t)}^\circ dV_{g(t)}\\
&=\int_{B_{r_0}(0)}-(P+(n-1)Q) dV_{g(t)}.
\end{align*}
By $(\ref{equn_Pn-1Qt})$,
\begin{align*}
\partial_tR_{g(t)}^\circ=\Delta_gR_{g(t)}^\circ+2(1-n)R_{g(t)}^\circ+2(P^2+(n-1)Q^2)
\end{align*}
on $\overline{B_{r_0}}\times[0,\infty)$. By $(\ref{equn_nPn-1Qb})$ and $(\ref{equn_DtHb})$,
\begin{align*}
\frac{\partial}{\partial n_g}R_{g(t)}^\circ=-2\eta'(t)+2Q(r_0,t)\eta(t),
\end{align*}
on $\partial B_{r_0}(0)\times[0,\infty)$, where $n_{g}$ is the outer normal vector field of $\partial B_{r_0}(0)$ with respect to $g(t)$. Therefore,
\begin{align*}
&\frac{d}{dt}\int_{B_{r_0}(0)}R_{g(t)}^\circ dV_{g(t)}\\
=&\int_{B_{r_0}(0)}[-(R_g^\circ)^2+\Delta_gR_{g(t)}^\circ+2(1-n)R_{g(t)}^\circ+2(P^2+(n-1)Q^2)]dV_{g(t)}\\
=&\int_{B_{r_0}(0)}[-(P+(n-1)Q)^2+2(P^2+(n-1)Q^2)+2(1-n)R_{g(t)}^\circ]dV_g+\int_{\partial B_{r_0}(0)}\frac{\partial}{\partial n_g}R_{g(t)}^\circ dS_{g(t)}\\
=&\int_{B_{r_0}(0)}[P^2-2(n-1)PQ-(n-1)(n-3)Q^2+2(1-n)R_{g(t)}^\circ]dV_g\\
&+\int_{\partial B_{r_0}(0)}-2\eta'(t)+2Q(r_0,t)\eta(t) dS_{g(t)}
\end{align*}
By $(\ref{inequn_pin1})$, we obtain $Q(r_0,t)<0$ for $t\geq0$, and
\begin{align*}
P^2-2(n-1)PQ\leq -(n-1)PQ<0
\end{align*}
on $\overline{B_{r_0}}(0)\times(0,\infty)$. On the other hand, by definition,
\begin{align*}
Q(r_0,t)&=\frac{1}{n-1}P(r_0,t)+(n-2)(b(r_0,t)^{-2}-\frac{1}{(n-1)^2}\eta(t)^2+1)\\
&\leq (n-2)(b(r_0,0)^{-2}-\frac{1}{(n-1)^2}\eta(t)^2+1)\\
&=-\frac{(n-2)}{(n-1)^2}(\eta(t)^2-\eta(0)^2)<0
\end{align*}
for $t>0$. Let
\begin{align*}
\varphi(t)=\int_{\partial B_{r_0}(0)}-\frac{2(n-2)}{(n-1)^2}(\eta(t)^2-\eta(0)^2)\eta(t) dS_{g(t)},
\end{align*}
for $t\geq0$. Then $\varphi\in C^{k-1}([0,\infty))$ and $\varphi(t)>0$ for $t>0$, and moreover, there exists $\epsilon>0$ and $N>0$, such that $\varphi(t)\leq - \epsilon$ for any $t\geq N$. Now, let $f(t)=\int_{B_{r_0}(0)}R_{g(t)}^\circ dV_{g(t)}$. Therefore,
\begin{align*}
\frac{df}{dt}\leq 2(1-n)f+\varphi(t)
\end{align*}
for $t\geq0$, and hence,
\begin{align*}
f(t)\leq e^{-2(n-1)t}\int_0^te^{2(n-1)\tau}\varphi(\tau)d\tau
\end{align*}
for $t\geq0$. Therefore, there exists $\epsilon_1>0$ and $N_1>0$ such that
\begin{align*}
f(t)\leq - \epsilon_1
\end{align*}
for $t\geq N_1$. Thus,
\begin{align*}
\text{Vol}_{g(t)}(B_{r_0}(0))&=\text{Vol}_{g(N_1)}(B_{r_0}(0))+\int_{N_1}^t-f(\tau)d\tau\\
&\geq \text{Vol}_{g(N_1)}(B_{r_0}(0))+\epsilon_1(t-N_1)
\end{align*}
for $t\geq N_1$, and hence,
\begin{align*}
\text{Vol}_{g(t)}(B_{r_0}(0))\to\infty
\end{align*}
as $t\to\infty$. This completes the proof of the theorem.

\end{proof}

\begin{Remark}
If $\eta(t)$ is uniformly bounded for $t\in[0,\infty)$, then by the monotonicity of $P-(n-1)Q$ in $r$ and $(\ref{equn_Pn-1Qb})$, $P-(n-1)Q$ is uniformly bounded on $\overline{B_{r_0}}(0)\times[0,\infty)$; while if $\eta(t)\to+\infty$ as $t\to\infty$, $(P-(n-1)Q)(r_0,t)\to+\infty$, and hence, $P(r_0,t)\leq Q(r_0,t)\to-\infty$,  as $t\to\infty$.
\end{Remark}

\vskip0.2cm

\section{The special case of dimension two}\label{section_6}

When $n=2$, assume the boundary data $\eta$ in $(\ref{equn_Nbdc1})$ satisfies that $\eta'(t)\geq0$ for $t\geq0$ and $\eta'(t)>0$ for $t\in(0,\epsilon)$ with some constant $\epsilon>0$. Short-time existence of the solution $g(t)$ to $(\ref{equn_NRF1d2})-(\ref{equn_Ninc2d2})$ was addressed in the preliminaries. Let $T>0$ be the largest existence time of the solution $g(t)$ to the initial-boundary problem $(\ref{equn_NRF1d2})-(\ref{equn_Ninc2d2})$.

Since $P=0$ at $t=0$, it follows from the continuity of $P$ that $P-1<0$ on $\overline{B_{r_0}}(0)\times[0,\epsilon_0)$ for some small $\epsilon_0>0$. By the maximum principle for $(\ref{equn_n2Pt})$, $P$ cannot attain its positive maximum or negative minimum in $B_{r_0}(0)\times(0,\epsilon_1]$ for any $\epsilon_1\in(0,\epsilon_0)$. 
We first show that
\begin{align}\label{inequn_P-0}
P(r,t)\leq0
\end{align}
on $\overline{B_{r_0}}(0)\times[0,T)$. In comparison to \cite{Li3}, the new contribution here for two-dimensional cases is the following theorem on curvature sign preservation:

\begin{thm}
For $n=2$, assume the boundary data $\eta$ in $(\ref{equn_Nbdc1})$ satisfies that $\eta'(t)\geq0$ for $t\geq0$. Then $(\ref{inequn_P-0})$ holds on $\overline{B_{r_0}}(0)\times[0,T)$. If moreover, $\eta'(t)>0$ for $t\in(0,\epsilon)$ with some constant $\epsilon>0$, then
\begin{align}
P<0
\end{align}
on $\overline{M}\times(0,T)$.
\end{thm}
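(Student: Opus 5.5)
The plan is to run a maximum principle argument on the scalar equation $(\ref{equn_n2Pt})$, $\partial_tP=\Delta_gP+2P(P-1)$, using the Robin-type boundary relation $(\ref{equn_Dtkb})$. Rewritten, $(\ref{equn_Dtkb})$ reads
\begin{align*}
\frac{\partial P}{\partial n_g}=\eta P-\eta'(t)\leq \eta P\quad\text{on }\partial M\times[0,T).
\end{align*}
Working with $P$ itself gives no contradiction. At a positive boundary maximum, Hopf's lemma only yields $\partial_{n_g}P>0$, which is compatible with $\eta P>0$. The first step is therefore to divide $P$ by a positive weight whose logarithmic normal derivative on $\partial M$ dominates $\eta$, so that the Robin coefficient turns negative.

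The natural candidate $P/b$ gives exactly $\partial_{n_g}(P/b)=-\eta'/b\leq 0$, since $b_s/b=k_g=\eta$ on $\partial M$. However, it is singular at the origin. I will instead fix $T'<T$ and use the smooth weight $\varphi=e^{\lambda s^2}$, where $s$ is the $g(t)$-distance to the origin. On $[0,T']$ the quantities $s(r_0,t)$ and $\eta(t)$ are bounded above, and $s(r_0,t)\geq r_0$ because $a$ is continuous and equals $1$ at $t=0$. Hence $\lambda$ can be chosen so large that $2\lambda s(r_0,t)>\eta(t)$ on $[0,T']$. Setting $w=e^{-Kt}P/\varphi$, we get on $\partial M$
\begin{align*}
\frac{\partial w}{\partial n_g}=e^{-Kt}\varphi^{-1}\big((\eta-2\lambda s)P-\eta'\big),
\end{align*}
which is strictly negative wherever $P>0$.

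In the interior, $w$ satisfies
\begin{align*}
\partial_tw=\Delta_gw+2\langle\nabla\log\varphi,\nabla w\rangle+c\,w,
\end{align*}
where $c=2(P-1)-K-\varphi^{-1}(\partial_t-\Delta_g)\varphi+2|\nabla\log\varphi|^2$. The coefficient $c$ is bounded on $\overline M\times[0,T']$ because $g(t)$ is smooth there; the term $\partial_t s$ is controlled via $(\ref{equn_saformul})$ and $(\ref{equn_NRFa})$. Choosing $K$ large makes $c<0$.

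Now suppose $\max w>0$ on $\overline M\times[0,T']$, and let $(p,t^*)$ be a point where it is attained; then $t^*>0$ since $P(\cdot,0)=0$. If $p$ is interior, then $\nabla w=0$, $\Delta_gw\leq0$ and $\partial_tw\geq0$ there, so $0\leq c\,w<0$, a contradiction. The origin is included in this case: $s^2$ is smooth there and $P$ is smooth by $(\ref{equn_originr})$. If $p\in\partial M$, then $\partial_{n_g}w\geq0$ at $p$, contradicting the boundary sign computed above. Hence $P\leq0$ on $\overline M\times[0,T']$ for every $T'<T$, which gives $(\ref{inequn_P-0})$.

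For the strict inequality, assume in addition that $\eta'>0$ on $(0,\epsilon)$. Since $P\leq 0$, the function $P$ satisfies a linear equation $\partial_tP=\Delta_gP+c'P$ with bounded $c'=2(P-1)$. By the strong maximum principle, if $P(p,t_1)=0$ at an interior point with $t_1>0$, then $P\equiv0$ on $\overline M\times[0,t_1]$. Then $(\ref{equn_Dtkb})$ gives $\eta'=\eta P-\partial_{n_g}P=0$ on $(0,t_1)$, contradicting $\eta'>0$ on $(0,\epsilon)$. If instead $P(p,t_1)=0$ at some $p\in\partial M$ while $P<0$ inside, Hopf's lemma gives $\partial_{n_g}P(p,t_1)>0$. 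But $(\ref{equn_Dtkb})$ gives $\partial_{n_g}P=\eta\cdot 0-\eta'\leq0$ at that point, again a contradiction. Therefore $P<0$ on $\overline M\times(0,T)$.

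The main obstacle I expect is the interior part of the weighted argument. One has to make sure that the time dependence of $s$ through $a(r,t)$, and its regularity at the origin, keep $c$ bounded on compact time intervals. One also has to handle the degenerate possibility that the positive maximum first occurs at $t^*$ with $P$ only marginally positive. Both issues should be routine on $[0,T']$ with $T'<T$, using the smoothness of the Gianniotis solution.
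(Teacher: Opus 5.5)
Your proposal is correct and is essentially the paper's argument. The paper likewise multiplies $P$ by $e^{-at}e^{-N\tau}$ with $\partial_{n_g}\tau\approx 1$ on $\partial M$, which makes both the Robin coefficient $\eta-N\partial_{n_g}\tau$ and the zeroth-order coefficient negative, and then finishes with the strong maximum principle and Hopf's lemma. The only real difference is that the paper freezes $\tau$ (built from $s(\cdot,t_0)$ near $\partial M$) at a first failure time $t_0$ and works on $[t_0,t_0+\delta]$, whereas you use the time-dependent weight $e^{\lambda s^2}$ globally on $[0,T']$.

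There are two harmless slips. First, the zeroth-order coefficient is $c=2(P-1)-K-\varphi^{-1}(\partial_t-\Delta_g)\varphi$, with no extra $2|\nabla\log\varphi|^2$; this is absorbed by choosing $K$ large anyway. Second, $s(r_0,t)\geq r_0$ does not follow from continuity at $t=0$, since it would need $P\leq0$, which is what you are proving. You only need $\inf_{[0,T']}s(r_0,t)>0$, and this holds because $a=e^{-\int_0^tP\,d\tau}>0$ is continuous on $[0,r_0]\times[0,T']$.
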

\begin{proof}
Assume the contrary, i.e., there exists $t_0\in[0,T)$, such that $(\ref{inequn_P-0})$ holds for $t\in [0,t_0]$ and for any $\delta>0$, there exists $t_1\in(t_0,t_0+\delta)$ such that
\begin{align*}
P(r_0,t_1)>0,
\end{align*}
where we have applied the maximum principle for $(\ref{equn_n2Pt})$, continuity of $P-1$, and the fact $P-1<0$ on $\overline{B_{r_0}}(0)\times[0,t_0]$.

Now we define a rotationally symmetric function $\tau(x)\in C^2(\overline{M})$ such that $\tau(x)=\tau(r(x))\geq \frac{1}{2}$ for $x\in\overline{M}\subseteq \mathbb{R}^2$ and $\tau(x)=1-s(r_0,t_0)+s(r(x),t_0)$ in a small neighborhood of $\partial M$, where $r(x)=|x|$ in the Euclidean space $\mathbb{R}^2$. Thus, $\tau(x)=1$ for $x\in \partial M$, and
\begin{align}\label{equn_taunb}
\frac{\partial}{\partial n_{g(t_0)}}\tau=1
\end{align}
on $\partial B_{r_0}(0)$, where $n_{g(t_0)}$ is the outer normal vector field of $\partial B_{r_0}(0)$ with respect to $g(t_0)$. Let $\theta=e^{-at}Pe^{-N\tau(x)}$, with $a,\,N>0$ two large constants to be determined. Thus for any $\delta>0$, by $(\ref{equn_n2Pt})$, we obtain
\begin{align}\label{equn_thetat}
\theta_t'&=e^{-at}\Delta_g(P)e^{-N\tau(x)}+(-a+2P-2)\theta\\
&=\Delta_g\theta+2Ne^{-N\tau(x)}\nabla_g(e^{-at}P)\cdot\nabla_g\tau(x)-e^{-at}P\Delta_g(e^{-N\tau(x)})+(-a+2P-2)\theta\nonumber\\
&=\Delta_g\theta+2N\nabla_g\tau(x)\cdot \nabla_g\theta+[N^2 \big|\nabla_g\tau(x)\big|_g^2+N \Delta_g\tau(x)+2P-2-a]\theta\nonumber
\end{align}
on $\overline{M}\times[t_0,t_0+\delta]$. Also, by $(\ref{equn_Dtkb})$, we obtain
\begin{align}\label{equn_thetanb}
\frac{\partial}{\partial n_g}\theta=(\eta\,-\,N\,\frac{\partial\tau(x)}{\partial n_g})\theta-e^{-at-N\tau(x)}\eta'(t)
\end{align}
on $\partial M\times[t_0,t_0+\delta]$, where $g=g(t)$ and $n_g$ is the outer normal vector field of $\partial M$ with respect to $g$. By $(\ref{equn_taunb})$ and continuity of $g(t)$, there exists $\delta>0$ small such that
\begin{align}
\frac{9}{10}\leq \frac{\partial\tau(x)}{\partial n_g} \leq \frac{11}{10}
\end{align}
on $\partial M\times[t_0,t_0+\delta]$, and hence, we now take $N>0$ large so that
\begin{align}\label{inequn_taunb2}
\eta\,-\,N\,\frac{\partial\tau(x)}{\partial n_g}<0
\end{align}
on $\partial M\times[t_0,t_0+\delta]$. Now we take $a>0$ large so that
\begin{align*}
N^2 \big|\nabla_g\tau(x)\big|_g^2+N \Delta_g\tau(x)+2P-2-a<0
\end{align*}
on $\overline{M}\times[t_0,t_0+\delta]$. By the assumption, $\theta(x,t_0)\leq0$ for $x\in \overline{M}$, and there exists $t_1\in (t_0,t_0+\delta)$ such that $\sup_{\overline{M}}\theta(\cdot,t_1)>0$. Therefore, by the maximum principle for $(\ref{equn_thetat})$, there exists $t_2\in(t_0,t_1]$ such that
\begin{align*}
\theta(r_0,t_2)=\sup_{\overline{M}\times[t_0,t_2]}\theta>0,
\end{align*}
and hence,
\begin{align*}
\frac{\partial}{\partial n_g}\theta(r_0,t_2)\geq0
\end{align*}
contradicting with $(\ref{equn_thetanb})$, by $(\ref{inequn_taunb2})$ and $\eta'(t)\geq0$. Therefore, the inequality $(\ref{inequn_P-0})$ holds on $\overline{M}\times[0,T)$.

Now if $\eta'(t)>0$ for $t\in(0,\epsilon)$, by the strong maximum principle for $(\ref{equn_n2Pt})$, $P<0$ on $M\times(0,T)$. Assume $P(r_0,t_1)=0$ for some $t_1\in(0,T)$. Thus, by the Hopf Lemma(see Lemma 2.8 in Page 12 in \cite{Liebm}), we obtain
\begin{align*}
\frac{\partial}{\partial n_g}P(r_0,t_1)>0,
\end{align*}
contradicting with $(\ref{equn_Dtkb})$. This completes the proof of the theorem.



\end{proof}

To show the long-time existence of the solution $u$ to the initial-boundary value problem $(\ref{equn_ut})-(\ref{equn_uint})$, we recall the comparison theorem derived in \cite{Li3}.
\begin{thm}(Theorem 3.1 and Remark 3.1 in \cite{Li3}) Let $u_1$ be a subsolution to $(\ref{equn_ut})$ satisfying
\begin{align*}
&u_t=e^{-2u}(\Delta_{g(0)}u-K_{g(0)})-1,\,\,\text{in}\,\,\overline{M}\times[0,T),\\
&\frac{\partial u}{\partial n_{g(0)}}+k_{g(0)}\leq\eta_1 e^u,\,\,\text{on}\,\,\partial M\times[0,T),\\
&u\big|_{t=0}\leq u_{01},\,\,\,\text{in}\,\,\overline{M},
\end{align*}
and $u_2$ be a supersolution to $(\ref{equn_ut})$ satisfying
\begin{align*}
&u_t\geq e^{-2u}(\Delta_{g(0)}u-K_{g(0)})-1,\,\,\text{in}\,\,\overline{M}\times[0,T),\\
&\frac{\partial u}{\partial n_{g(0)}}+k_{g(0)}\geq\eta_1 e^u,\,\,\text{on}\,\,\partial M\times[0,T),\\
&u\big|_{t=0}\geq u_{01},\,\,\,\text{in}\,\,\overline{M},
\end{align*}
with $u_{01}\leq u_{02}$ in $\overline{M}$ and $\eta_1\leq \eta_2$ on $\partial M\times[0,T)$. Then we have
\begin{align}
u_1\leq u_2
\end{align}
for $(x,t)\in \overline{M}\times[0,T)$.
\end{thm}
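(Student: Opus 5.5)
The plan is a weighted maximum principle argument, modelled on the proof of the curvature sign preservation theorem just above. The statement should be read with the natural indices: $u_1$ satisfies the boundary inequality with $\eta_1$ and initial bound $u_{01}$, $u_2$ satisfies it with $\eta_2$ and $u_{02}$, and $u_1$ is a subsolution in the interior, $u_{1,t}\le e^{-2u_1}(\Delta_{g(0)}u_1-K_{g(0)})-1$. I will assume both functions are $C^{2,1}$ up to the boundary on $\overline{M}\times[0,T']$ for each $T'<T$. I fix such a $T'$ and prove $u_1\le u_2$ on $\overline{M}\times[0,T']$; letting $T'\to T$ then gives the statement.

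\textbf{Linearization.} Set $w=u_1-u_2$ and split
\begin{align*}
e^{-2u_1}(\Delta u_1-K)-e^{-2u_2}(\Delta u_2-K)=e^{-2u_1}\Delta w+(e^{-2u_1}-e^{-2u_2})(\Delta u_2-K).
\end{align*}
By the mean value theorem the second term equals $c(x,t)\,w$, with $c=-2e^{-2\xi}(\Delta_{g(0)}u_2-K_{g(0)})$ for some $\xi$ between $u_1$ and $u_2$. Hence $c$ is bounded on $\overline{M}\times[0,T']$, and
\begin{align*}
w_t\le e^{-2u_1}\Delta_{g(0)}w+c\,w\quad\text{in } \overline{M}\times[0,T'].
\end{align*}
On $\partial M$, since $e^{u_1}>0$ and $\eta_1\le\eta_2$, we have $\eta_1e^{u_1}\le \eta_2e^{u_1}$, and therefore
\begin{align*}
\frac{\partial w}{\partial n_{g(0)}}\le \eta_2(e^{u_1}-e^{u_2})=\eta_2e^{\zeta}w=:\beta(x,t)\,w,
\end{align*}
with $\zeta$ between $u_1$ and $u_2$, so $\beta$ is bounded on $\partial M\times[0,T']$. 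Finally $w(\cdot,0)\le u_{01}-u_{02}\le0$.

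\textbf{Weight.} Choose a smooth function $\tau\ge\frac12$ on $\overline{M}$ with $\tau=1$ and $\partial\tau/\partial n_{g(0)}=1$ on $\partial M$; for example, take $\tau$ to be a suitable affine function of the $g(0)$-distance to the origin near the boundary. Define
\begin{align*}
\theta=e^{-\lambda t-N\tau}w.
\end{align*}
A computation exactly like the one giving $(\ref{equn_thetat})$ and $(\ref{equn_thetanb})$ shows that
\begin{align*}
\theta_t\le e^{-2u_1}\bigl(\Delta\theta+2N\nabla\tau\cdot\nabla\theta\bigr)+\bigl[e^{-2u_1}(N^2|\nabla\tau|^2+N\Delta\tau)+c-\lambda\bigr]\theta,\qquad \frac{\partial\theta}{\partial n}\le(\beta-N)\theta\ \text{ on }\partial M.
\end{align*}
First choose $N>\sup\beta$, then choose $\lambda$ so large that the zeroth-order coefficient in the interior inequality is negative.

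\textbf{Contradiction.} Suppose $\sup\theta>0$ on $\overline{M}\times[0,T']$, and let $(x_0,t_0)$ be a point where this positive maximum is attained; since $\theta\le0$ at $t=0$, we have $t_0>0$.
\begin{itemize}
\item If $x_0$ is an interior point, then $\theta_t\ge0$, $\nabla\theta=0$ and $\Delta\theta\le0$ there. The interior inequality then gives $0\le(\text{negative})\cdot\theta<0$, a contradiction.
\item If $x_0\in\partial M$, then $\partial_n\theta(x_0,t_0)\ge0$, while the boundary inequality gives $\partial_n\theta\le(\beta-N)\theta<0$, again a contradiction.
\end{itemize}
Hence $\theta\le0$, so $w\le0$, that is, $u_1\le u_2$ on $\overline{M}\times[0,T']$.

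\textbf{Main obstacle.} The delicate step is the boundary case. The Robin-type condition $\partial_nw\le\beta w$ has a coefficient $\beta=\eta_2e^{\zeta}$ of unfavourable sign, so a plain maximum principle does not exclude a boundary maximum. The weight $e^{-N\tau}$, with $\partial_n\tau=1$, is what makes the effective boundary coefficient $\beta-N$ negative. The price is extra first- and zeroth-order terms in the interior inequality, and these are absorbed by the factor $e^{-\lambda t}$. This requires the coefficients $c$ and $\beta$ to be bounded, which is why I work on a compact time interval $[0,T']$ with $T'<T$ instead of on all of $[0,T)$.
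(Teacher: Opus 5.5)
Your argument is correct. The linearization with bounded $c$, the Robin-type inequality $\partial_n w\le \beta w$, and the weight $e^{-\lambda t-N\tau}$ with $\partial_{n_{g(0)}}\tau=1$ all check out, and you read the index typos in the statement the right way ($\eta_2$, $u_{02}$, and ``$\le$'' for the subsolution).

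The paper gives no proof of this statement; it is quoted from \cite{Li3}. Your proof is the same weighted maximum principle the paper uses in its two-dimensional curvature-sign theorem, cf.\ (\ref{equn_thetat})--(\ref{equn_thetanb}), now applied to $u_1-u_2$ with a fixed background metric. The only extra input is that $\Delta_{g(0)}u_2$ stays bounded up to $\partial M$ on $[0,T']$, and your $C^{2,1}$ assumption supplies this.
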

We restate the conclusion regarding the upper bound control of the solution from Theorem 1.3 in \cite{Li3} as a new theorem below:
\begin{thm}\label{thm_upperbdun2}
Let $(\overline{M},g)$ be a compact surface with its interior $M$ and boundary $\partial M$. Consider the initial-boundary value problem
\begin{align}
&\label{equn_ut2}u_t=e^{-2u}(\Delta_gu-K_g)-1,\,\,\text{in}\,\,\overline{M}\times[0,\infty),\\
&\label{equn_unb2}\frac{\partial u}{\partial n_g}+k_{g}=\eta e^u,\,\,\text{on}\,\,\partial M\times[0,\infty),\\
&u\label{equn_uint2}\big|_{t=0}=u_0,\,\,\,\text{in}\,\,\overline{M},
\end{align}
Here $g$ is a fixed background Riemannian metric continuous up to $\partial M$. We assume that $u_0\in C^{2,\alpha}(\overline{M})$ and $\eta\in C^{1+\alpha,\frac{1}{2}+\frac{\alpha}{2}}(\partial M\times [0,T_1])$ for all $T_1>0$, and also the compatibility condition holds on $\partial M\times\{0\}$:
\begin{align*}
\frac{\partial u_0}{\partial n_{g}}+k_{g}=\eta(\cdot,0)e^{u_0}.
\end{align*}
Moreover, assume that $(\ref{inequn_etaupperbd})$ holds on $\partial M\times[0,\infty)$, where $y(t)\in C^3([0,\infty))$ is some positive function satisfying $(\ref{inequn_ODEgrowth})$ for $t\in[0,\infty)$. Then for any $T_1>0$, there exists $C=C(T_1)>0$ such that the solution $u$ to $(\ref{equn_ut2})-(\ref{equn_uint2})$ satisfies
\begin{align}\label{inequn_upperbdun2}
u\leq C(T_1)
\end{align}
on $\overline{M}\times[0,T_1)$.
\end{thm}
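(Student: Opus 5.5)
The plan is to build an explicit spatially constant supersolution of $(\ref{equn_ut2})$--$(\ref{equn_uint2})$ from the function $y$, and then conclude with the comparison theorem (Theorem 3.1 and Remark 3.1 in \cite{Li3}) recalled above.

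\emph{Choice of the barrier.} I will try $\bar u(x,t)=\ln\big(y(t)^{1/3}\big)+\lambda\,\rho(x)$. Here $\rho$ is a smooth function on $\overline{M}$ with $\rho=0$ on $\partial M$ and $\partial_{n_g}\rho=1$ on $\partial M$; it can be built from the distance to $\partial M$ near the boundary and cut off inside. The constant $\lambda\geq 0$ is to be fixed. The simplest version is $\lambda=0$, i.e. $\bar u=\tfrac13\ln y$.

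\emph{Boundary inequality.} With $\lambda$ fixed, the required supersolution inequality on $\partial M$ reads
\[
\frac{\partial\bar u}{\partial n_g}+k_g\geq \eta e^{\bar u}.
\]
Since $e^{\bar u}=y^{1/3}$ on $\partial M$, it suffices that
\[
\lambda+k_g\geq \eta\,y^{1/3}.
\]
Hypothesis $(\ref{inequn_etaupperbd})$ gives only $\eta\leq y^{1/3}-2$, so $\eta y^{1/3}$ can be as large as about $y^{2/3}$, which is unbounded. A fixed $\lambda$ therefore does not work. I will instead let the boundary slope grow in time and take
\[
\bar u=\tfrac13\ln y+\mu(t)\rho(x), \qquad \mu(t)=y(t)^{2/3}+C_0,
\]
with $C_0\geq\sup|k_g|$. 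Then $\partial_{n_g}\bar u+k_g\geq y^{2/3}\geq \eta y^{1/3}$ on $\partial M$.

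\emph{Interior inequality.} Inside $M$ I need
\[
\bar u_t\geq e^{-2\bar u}\big(\mu\Delta_g\rho-K_g\big)-1 .
\]
The left side is $\tfrac{y'}{3y}+\mu'\rho\geq 1+\tfrac{1}{3y}+\mu'\rho$, using $(\ref{inequn_ODEgrowth})$. To control the right side, choose $\rho\leq 0$ with $\Delta_g\rho\leq C$, and note that $e^{-2\bar u}=y^{-2/3}e^{-2\mu\rho}$. The terms with $\rho<0$ are the delicate ones: there $e^{-2\mu\rho}$ is large while $\mu'\rho$ is negative. This is the main obstacle. One way around it is to localize $\rho$ to a collar of width about $\mu^{-1}$. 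Alternatively, the time dependence could be handled by working only on $[0,T_1)$: there $y$, and hence $\mu$, is bounded by a constant depending on $T_1$, which is all that is needed for an upper bound $C(T_1)$. On a finite interval I can simply take
\[
\bar u=A+Bt+\lambda\rho ,
\]
with $\lambda\geq \sup_{[0,T_1]}\eta\,e^{A}+\sup|k_g|$ and $\rho\leq 0$ chosen so that $\rho\geq -1/\lambda$ in the collar (so that $e^{\lambda\rho}$ stays comparable to $1$ at the boundary; this fixes the matching to the cutoff). Then $A\geq\sup u_0+1$ and $B$ large make the interior inequality hold, since $e^{-2\bar u}$ is bounded and $B$ absorbs everything.

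\emph{Conclusion.} Comparison with $u_1=u$ and $u_2=\bar u$ gives $u\leq \sup_{\overline{M}\times[0,T_1]}\bar u=:C(T_1)$. I expect the only genuine work to be making the boundary term and the interior term compatible through the shape of $\rho$ near $\partial M$. The finite-interval version avoids using $(\ref{inequn_ODEgrowth})$ except to guarantee that $\eta$ is bounded on $[0,T_1]$.
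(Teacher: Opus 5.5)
\textbf{There is a genuine gap: the final barrier $\bar u=A+Bt+\lambda\rho$ does not close on a fixed interval $[0,T_1]$.}

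\emph{Where it fails.} Since $\rho=0$ on $\partial M$, the boundary inequality at time $t$ is $\lambda+k_g\ge \eta\,e^{A+Bt}$, not $\lambda+k_g\ge \eta\,e^{A}$. So you need $\lambda\gtrsim \bar\eta\,e^{A+BT_1}$, where $\bar\eta=\sup_{\partial M\times[0,T_1]}\eta^+$. In particular, $\lambda$ depends on $B$.

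To keep $e^{-2\bar u}$ under control you also need $\lambda\rho\ge -c$ on all of $\overline M$, for a fixed $c$. Together with $\rho=0$ and $\partial_{n_g}\rho=1$ on $\partial M$, this forces $\rho$ to turn from slope $1$ to slope $0$ within distance $O(1/\lambda)$ of $\partial M$. Integrating $\Delta_g\rho$ over $\{d\le 2c/\lambda\}$ then gives $\Delta_g\rho\gtrsim\lambda$ somewhere in that collar. At such a point $e^{-2\bar u}\ge e^{-2A-2Bt}$, so at $t=0$ the interior requirement contains
$e^{-2\bar u}\lambda\Delta_g\rho\gtrsim e^{-2A}\lambda^2\gtrsim\bar\eta^{2}e^{2BT_1}$.
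You would therefore need $B\gtrsim \bar\eta^2e^{2BT_1}$, which is false for $B$ large. So ``$B$ absorbs everything'' does not hold; the inequalities can be arranged only when $T_1\lesssim(1+\bar\eta^2)^{-1}$.

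This is intrinsic to a time-independent spatial profile. The boundary geodesic curvature of $e^{2\bar u}g$ decays like $e^{-Bt}$, so at $t=0$ it must be at least $\bar\eta e^{BT_1}$. That forces Gauss curvature $\lesssim-\bar\eta^2e^{2BT_1}$ near $\partial M$ at early times.

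\emph{The missing idea} is that, once normalized, the short-time constraint does not depend on $A$, so you can iterate.
\begin{itemize}
\item On $[t_1,t_1+\tau]$ take $A=\max(\sup u(\cdot,t_1),0)+1$ and $\lambda=\bar\eta e^{A+1}+\sup|k_g|+\lambda_0$.
\item Take $\rho=-\lambda^{-1}\psi(\lambda d)$, where $d$ is the distance to $\partial M$, $\psi(0)=0$, $\psi'(0)=1$, $0\le\psi'\le1$, and $\psi'=0$ on $[1,\infty)$.
\item Take $\bar u=A+(t-t_1)/\tau+\lambda\rho$.
\end{itemize}
The boundary inequality holds because $(t-t_1)/\tau\le 1$, and $\bar u(\cdot,t_1)\ge A-1\ge u(\cdot,t_1)$. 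Using $|\Delta_g\rho|\le C\lambda$ and $e^{-2A}\lambda^2\le C(1+\bar\eta^2)$, the interior inequality reduces to $1/\tau\ge C(1+\bar\eta^2)$, with $C$ depending only on $g$ and $\psi$. Comparison then gives $\max(\sup u(\cdot,t),0)\le\max(\sup u(\cdot,t_1),0)+2$ on $[t_1,t_1+\tau]$. Iterating about $T_1/\tau$ times yields
$u\le\max(\sup u_0,0)+2+2C(1+\bar\eta^2)T_1$.
Equivalently, you can let $\lambda=\lambda(t)$ grow like $\bar\eta e^{A+Bt}$ inside the barrier.

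With this repair, your remark that (\ref{inequn_ODEgrowth}) is not needed for a bound on a finite interval is correct. The paper gives no argument at this point; it simply quotes Theorem 1.3 of \cite{Li3}.
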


Now we show the long-time existence of the solution $g(t)$ to the initial-boundary value problem $(\ref{equn_NRF1d2})-(\ref{equn_Ninc2d2})$.
\begin{thm}\label{thm_existlongtimen2}
Let $n=2$. Assume that $\eta=\eta(t)\in C^k([0,\infty))$ is non-decreasing for $t\geq0$ and $\eta'(t)>0$ on $(0,t_0)$ for some $t_0>0$. Moreover, $(\ref{inequn_etaupperbd})$ holds on $\partial M\times[0,\infty)$, where $y(t)\in C^3([0,\infty))$ is some positive function satisfying $(\ref{inequn_ODEgrowth})$ for $t\in[0,\infty)$.  Then there exists a unique solution $g(t)$ to $(\ref{equn_NRF1d2})-(\ref{equn_Ninc2d2})$ on $\overline{M}\times[0,\infty)$, under the compatibility conditions $(\ref{equn_comptk})$ with the mean curvature $H_{g_{-1}}\big|_{\partial M}$ replaced by the geodesic curvature $k_{g_{-1}}\big|_{\partial M}$ for some $k\geq2$.
\end{thm}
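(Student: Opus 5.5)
We argue by contradiction. Let $T$ be the maximal existence time of the solution $u$ to $(\ref{equn_ut})$--$(\ref{equn_uint})$, equivalently of $g(t)=e^{2u}g(0)$ to $(\ref{equn_NRF1d2})$--$(\ref{equn_Ninc2d2})$, and suppose $T<\infty$. We then obtain uniform $C^{2+\alpha,1+\alpha/2}$ bounds on $u$ over $\overline{M}\times[0,T)$, which lets us extend the solution past $T$ by the short-time theory recalled in the preliminaries.

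The first step is a lower bound on $u$. By the theorem on curvature sign preservation just proved, $P\leq0$ on $\overline{M}\times[0,T)$. Since $u(r,t)=-\int_0^tP(r,\tau)d\tau$, this gives $u\geq0$ and $u_t=-P\geq0$. Thus $u$ is non-decreasing in $t$, and $g(t)\geq g_{-1}$.

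The second step is an upper bound. Theorem \ref{thm_upperbdun2}, applied with background $g=g_{-1}$, $u_0=0$ and the growth hypothesis $(\ref{inequn_etaupperbd})$--$(\ref{inequn_ODEgrowth})$, gives $u\leq C(T)$ on $\overline{M}\times[0,T)$. The compatibility condition is exactly $(\ref{equn_comptk})$ with $k_{g_{-1}}$ in place of $H_{g_{-1}}$. Hence $0\leq u\leq C(T)$, so $e^{-2u}$ is bounded above and below by positive constants. The equation $(\ref{equn_ut})$ is then uniformly parabolic, with a Neumann-type boundary condition $\partial_{n}u=\eta e^u-k_{g_{-1}}$ whose right-hand side is bounded.

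The third step, which is the main obstacle, is a bound on $u_t=-P$, i.e.\ a lower bound for $P$. We would get it from the maximum principle for $(\ref{equn_n2Pt})$ together with the boundary relation $(\ref{equn_Dtkb})$, namely $\partial_{n_g}P=\eta P-\eta'$. At a negative minimum of $P$ on the boundary, $\partial_{n_g}P\leq0$. This only yields $\eta P\leq\eta'$, which has the wrong sign to bound $P$ from below. So instead we use the reaction term. We have $2P(P-1)\geq0$ for $P\leq0$, hence $\partial_tP\geq\Delta_gP$. We then absorb the boundary term with the same weight device used in the previous proof: set $\theta=e^{-at}Pe^{N\tau}$ with $\tau$ as there, and choose $N$ so that the Robin coefficient has the favorable sign. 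Since $g(t)$ is uniformly equivalent to $g_{-1}$, this gives $P\geq -C(T)$ on $[0,T)$.

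If this fails, the fallback is to use the parabolic $L^p$ and Schauder theory for the quasilinear oblique problem in divergence form. We would first get a H\"older bound for $u$ (De Giorgi--Nash--Moser with a bounded Neumann datum), then bootstrap, using that the coefficients depend only on $u$.

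With $u$ and $u_t$ bounded, the elliptic estimate at each time for $\Delta_{g_{-1}}u=e^{2u}(u_t+1)+K_{g_{-1}}$, with the bounded Neumann datum, gives $C^{1,\alpha}$ bounds. Parabolic Schauder estimates then upgrade these to $C^{2+\alpha,1+\alpha/2}$ bounds up to $T$. Therefore $u(\cdot,t)$ converges as $t\to T$ (it is also monotone in $t$). Restarting the flow at $t=T$ via the short-time theory of \cite{Gi} contradicts the maximality of $T$, so $T=\infty$. Uniqueness follows from the comparison theorem (Theorem 3.1 in \cite{Li3}) applied with equal data.
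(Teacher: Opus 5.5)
Your proof is correct and follows the paper's: $u\ge0$ and $u_t\ge0$ come from $P\le0$, $u\le C(T)$ comes from Theorem \ref{thm_upperbdun2}, and parabolic regularity plus a restart at $t=T$ finishes the argument; your extra weighted maximum-principle lower bound on $P=-u_t$ just makes explicit what the paper leaves to ``standard regularity theory''. One small sign point: with the weight $e^{N\tau}$ the boundary coefficient is $\eta+N\partial_{n_g}\tau$, so the favorable (negative) sign needs $N<0$, i.e.\ the paper's $e^{-N\tau}$ with $N>0$ large, after which a constant barrier $\theta\ge -C_0$ handles the bounded term $e^{-at-N\tau}\eta'$.
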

\begin{proof}
By $(\ref{inequn_P-0})$, the solution $u(r,t)=-\int_0^tP(r,\tau)d\tau$ to the initial-boundary value problem $(\ref{equn_ut})-(\ref{equn_uint})$ are increasing in $t$, for $(r,t)\in[0,r_0]\times[0,T)$. Hence, $u(r,t)\geq 0$ for $(r,t)\in[0,r_0]\times[0,T)$. That is to say, the function $v=u_t'(r,t)=-P(r,\tau)\geq0$, for $(r,t)\in[0,r_0]\times[0,T)$. Therefore, $u(\cdot,t)$ is a subsolution to
\begin{align}\label{equn_statlim}
\Delta_{g(0)}u-K_{g(0)}-e^{2u}=0
\end{align}
in $\overline{M}$ for $t\geq0$. Differentiating both sides of $(\ref{equn_ut})$ with respect to $t$ yields
\begin{align}\label{equn_vt}
v_t=e^{-2u}\Delta_{g(0)}v-2(v^2+v)
\end{align}
on $\overline{M}\times[0,T)$.

On the other hand, by Theorem \ref{thm_upperbdun2}, for any $T_1>0$, $(\ref{inequn_upperbdun2})$ holds on $\overline{M}\times[0,T_1)$.

In summary, if the largest existence time $T$ of the solution $g(t)$ satisfies $T<\infty$, then $u\geq0$ is uniformly bounded on $\overline{M}\times[0,T)$. Hence, by the standard regularity theory for parabolic equations, there exists a positive constant $C=C(T,\alpha)>0$ such that
\begin{align*}
\|u\|_{C^{4+\alpha,2+\frac{\alpha}{2}}(M\times[0,T))}\leq C(T,\alpha)
\end{align*}
for any $\alpha\in(0,1)$. Therefore, the solution $u$ can be extended to time $T$, and hence, the solution $g(t)$ exists for all $t\geq0$. This proves the theorem.

\end{proof}

Now we are ready to show the convergence of the solution $g(t)$ to $(\ref{equn_NRF1d2})-(\ref{equn_Ninc2d2})$ to a complete hyperbolic metric in $M$.

\begin{proof}[Proof of Theorem \ref{thm_convern2}.]
Since $u(\cdot,t)\in C^2(\overline{M})$ is a subsolution to $(\ref{equn_statlim})$, by the comparison theorem,
\begin{align*}
u(x,t)\leq u_{LN}(x)
\end{align*}
for $(x,t)\in M\times[0,\infty)$, where $u_{LN}$ is the solution to the Loewner-Nirenberg problem of $(\ref{equn_statlim})$ on $M$. Thus, by applying the standard regularity theory for parabolic equations to $(\ref{equn_ut})$, for each compact subset $F\subseteq M$, there exists a constant $C=C(F)>0$ such that
 \begin{align*}
 \|u\|_{C^{4,2}(F\times[0,\infty))}\leq C(F).
 \end{align*}
 Since $u(x,t)$ is increasing in $t$, $u(x,t)$ converges to $u_{\infty}(x)\leq u_{LN}(x)$ pointwisely in $M$ as $t\to\infty$. Therefore, by the Harnack inequality for $(\ref{equn_vt})$, $v=u_t\to0$ and $u(\cdot,t)\to u_{\infty}(\cdot)$ locally uniformly in $M$ as $t\to\infty$. Hence, standard regularity theory for $(\ref{equn_ut})$ yields, $u(x,t)\to u_{\infty}(x)$ locally uniformly in $C^{2,\alpha}$ in $M$. Therefore, $u_{\infty}$ satisfies $(\ref{equn_statlim})$ in $M$.

It remains to show the completeness of the metric $g_{\infty}=e^{2u_{\infty}}g(0)$ on $M$, and it suffices to show that the volume of $M$ with respect to $g(t)$ goes to infinity, as $t\to\infty$.

By $(\ref{equn_NRFa})$ and $(\ref{equn_NRFb})$,
\begin{align*}
\frac{d}{dt}Vol_g(B_{r_0}(0))=\int_{B_{r_0}(0)}-2P dV_g,
\end{align*}
for $t\geq0$. Taking derivative with respect to $t$ again, by $(\ref{equn_n2Pt})$ and $(\ref{equn_Dtkb})$ one has
\begin{align}
\frac{d}{dt}\int_{B_{r_0}(0)}-2P dV_g
&=\int_{B_{r_0}(0)}-2\Delta_gP+4P dV_g\\
&=-2\int_{\partial M}\frac{\partial P}{\partial n_g}dS_g+4\int_{B_{r_0}(0)}P dV_g\nonumber\\
&=2\int_{\partial M}\eta'(t)-\eta(t)P dS_g+4\int_{B_{r_0}(0)}P dV_g\nonumber\\
&\geq2\eta'(t)\big|\partial M\big|_{g(0)}+4\int_{B_{r_0}(0)}P dV_g,\nonumber
\end{align}
with $\big|\partial M\big|_{g(0)}$ the length of $\partial M$ with respect to $g(0)$, and hence,
\begin{align*}
\int_{B_{r_0}(0)}-2P(\cdot,t)dV_g\geq e^{2t_0-2t}\int_{B_{r_0}(0)}-2P(\cdot,t_0)dV_{g(t_0)}+2\big|\partial M\big|_{g(0)}e^{-2t}\int_{t_0}^te^{2\tau}\eta'(\tau)d\tau,
\end{align*}
for $t\geq t_0>0$. Therefore,
\begin{align*}
Vol_g(B_{r_0}(0))&\geq Vol_{g(t_0)}(B_{r_0}(0))+\int_{t_0}^t\int_{B_{r_0}(0)}-2P(\cdot,\tau) dV_{g(\tau)}d\tau\\
&\geq 2\big|\partial M\big|_{g(0)}\int_{t_0}^te^{-2\tau}\int_{t_0}^{\tau}e^{2y}\eta'(y)dy d\tau\to\infty
\end{align*}
as $t\to\infty$. This completes the proof of the theorem.

\end{proof}

 \end{document}